\documentclass{article}
\usepackage{amssymb,amsmath,mathrsfs}
\usepackage[colorlinks=false]{hyperref}
\usepackage{comment}
\usepackage{diagbox}
\usepackage{empheq}
\usepackage{geometry}
\usepackage{diagbox}
\usepackage{manyfoot}
\usepackage{graphicx,color}
\usepackage[outercaption]{sidecap}
\usepackage{xfrac}
\usepackage{subfig}
\usepackage{tikz}
\usepackage{enumitem}
\usepackage{pdfsync}
\usepackage{fancyhdr}
\usepackage[nottoc,notlof,notlot]{tocbibind}
\usepackage[titles,subfigure]{tocloft}

\usepackage{amssymb}
\usepackage{amsmath}
\usepackage{latexsym}
\usepackage{amsthm}
\usepackage{eucal}
\usepackage{amsthm}
\theoremstyle{plain}
\newtheorem{theorem}{Theorem}[section]
\newtheorem{corollary}[theorem]{Corollary}

\newtheorem{proposition}[theorem]{Proposition}
\theoremstyle{definition}
\newtheorem{definition}[theorem]{Definition}
\newtheorem{example}[theorem]{Example}
\newtheorem{remark}[theorem]{Remark}

\definecolor{MyDarkGreen}{rgb}{0,0.50,0.04}

\def\e{\varepsilon}
\def\rr{{\mathbb R}}

\def\NN{{\mathbb N}}

\DeclareMathOperator*{\dist}{dist}
\DeclareMathOperator*{\spann}{span}
\DeclareMathOperator*{\supp}{supp}

\DeclareMathOperator*{\reg}{reg}
\DeclareMathOperator*{\diam}{diam}

\def\XXint#1#2#3{{\setbox0=\hbox{$#1{#2#3}{\int}$} 
  \vcenter{\hbox{$#2#3$}}\kern-.5\wd0}}

\numberwithin{equation}{section}

\title{Nonlocal-to-local variational limits of energies on partitions}
\author{
{\sc Roberto Alicandro}
\\ \small Dipartimento di Matematica ``R. Caccioppoli'',
 Universit\`a di Napoli Federico II,\\
 \small
via Cintia, 80126 Napoli, Italy\\
\small roberto.alicandro@unina.it
\\
{\sc Giuliana Fusco}
\\ \small Scuola Superiore Meridionale,\\
 \small
via Mezzocannone 4, 80134 Napoli, Italy\\
\small g.fusco@ssmeridionale.it
\\
{\sc Chiara Leone}
\\ \small Dipartimento di Matematica ``R. Caccioppoli'',
 Universit\`a di Napoli Federico II,\\
 \small
via Cintia, 80126 Napoli, Italy\\
\small chileone@unina.it
}
\date{}

\begin{document}
\maketitle

\noindent
{\bf Abstract.}
We study, through a $\Gamma$-convergence procedure, the asymptotic behaviour of a general class of nonlocal energies defined on functions taking values in a finite set
$Y=\{y_1,\dots,y_N\}\subset\mathbb R^m$. These energies are written in terms of double integrals  accounting for pairwise interactions in an open set $\Omega$, weighted by a kernel $K_\varepsilon$.  If the latter is controlled from above and below by suitable non-negative radial kernels with unit mass concentrating at the origin, then, up to subsequences, the $\Gamma$-limit has domain $BV(\Omega;Y)$ and is given by a local surface energy. We show that the locality of the $\Gamma$-limit is not  guaranteed if the concentration of mass fails. For a class of reference energies, we obtain a more explicit
representation of the limit energy, also for non-radial kernels, recovering
the limits of isotropic and anisotropic fractional perimeters.
We also prove a homogenization result for periodic kernels.
\bigskip

\noindent {\bf Keywords.}  Nonlocal energies, $\Gamma$-convergence, partitions, interfacial
energies, homogenization.
\smallskip

\noindent
{\bf AMS Classifications.} 49J45, 35B27, 26B30, 49Q20

\section{Introduction}

Over the last two decades, increasing attention has been devoted to variational models involving nonlocal energies, which depend on interactions between points at a finite distance, providing a natural
framework for phenomena that cannot be adequately described by purely local energies. Such phenomena arise, for instance, in the description of phase transitions with long-range interactions \cite{AB, CH, H, SV}, in nonlocal minimal surfaces \cite{CRS}, in image processing \cite{BN, GO}, and in continuum mechanics through peridynamic theory  \cite{BMP}.

A prototypical example of such energies is given by functionals of the form
\begin{equation*}
\int_{\Omega}\int_{\Omega} f(x,y,u(x),u(y))\, dx\, dy
\end{equation*}
where $\Omega$ is an open set of $\rr^d$ and $f$ accounts for the interaction between pairs of points and depends
on their relative position and on the states of the system at both points.
\begin{comment}
   and the function $f$ describes the cost of the
interaction between the states $u(x)$ and $u(y)$. 
\end{comment}

A central question is whether, and in which form, a local description is recovered when the range
of the interactions becomes infinitesimal. The problem was first investigated by  Bourgain, Brezis, and Mironescu in their celebrated paper \cite{BBM}, where they studied the asymptotic behaviour of the Gagliardo seminorm $\displaystyle [u]_{W^{s,p}(\Omega)}$, as $s\to 1^-$, showing  that
$$
(1-s)[u]_{W^{s,p}(\Omega)}=(1-s)\int_\Omega\int_\Omega \frac{\ |u(y)-u(x)|^p\  }{\ |y-x|^{d+sp}}\, dx\, dy 
$$ 
approximates, up to a multiplicative constant,  the $p$-th power of the $L^p$ norm of $\nabla u$ for $p>1$ and $u\in W^{1,p}(\Omega)$, and the total variation $|Du|(\Omega)$ for $p=1$
and $u\in BV(\Omega)$ (see also \cite{D02}). The result was subsequently extended in \cite{P2} in terms of $\Gamma$-convergence.

\begin{comment}
    The origin of such nonlocal analysis goes back to the celebrated paper by Bourgain, Brezis and Mironescu \cite{BBM},
where the authors considered, for a bounded smooth open set
$\Omega\subset\mathbb{R}^d$ and a family of radial kernels
$\rho_\varepsilon\in L^1(\mathbb{R}^d)$ of unit mass concentrating at the origin,
energies of the form
$$
\int_{\Omega}\int_\Omega \rho_\varepsilon(y-x)\frac{|u(y)-u(x)|^p}{|y-x|^{p}}\,dx\,dy ,
\qquad u\in L^p(\Omega).
$$
They proved that these energies converge, up to a dimensional constant, to the local energy
$$\int_\Omega|\nabla u|^p\,dx,$$ providing a new characterization of
$W^{1,p}(\Omega)$ for $p>1$ and of $BV(\Omega)$ for $p=1$. In the latter case, the
convergence to a multiple of the total variation $|Du|(\Omega)$ for every
$u\in BV(\Omega)$, left open in \cite{BBM} for $d\ge2$, was later proved by
D\'avila \cite{D02}. Ponce extended these results to non-radial kernels, carrying out the corresponding $\Gamma$-convergence
analysis \cite{P2}, and proving the compactness of sequences with equibounded energies \cite{P}. Both
results play a key role in our analysis. 
\end{comment}

For energies with superlinear growth, a general asymptotic analysis  as $\e\to 0$ of families of functionals of the form
\begin{equation}\label{doubleinteps}
\int_\Omega\int_\Omega f_\e(x, y, u(y)-u(x))\, dx\, dy,
\end{equation}
under concentration of the energies on $x=y$, has been recently provided in \cite{AABPT},  leading to a general class of energies whose $\Gamma$-limits are local integral functionals on Sobolev space of the form
$$
\int_{\Omega} f(x,\nabla u)\, dx.
$$
This result
has been applied, among others, to periodic and stochastic homogenization, to
perforated domains (see for instance \cite{AGL, BF, DFZ, SZZ}), and, very recently, to the derivation of
lower-dimensional models for nonlocal thin films \cite{AT26}. The same approach has also been extended to
energies leading to free discontinuity functionals \cite{BDSTV}.

\begin{comment}
    For energies with superlinear growth, a general theory of convolution-type
functionals has been recently developed by the first author, Ansini, Braides, Piatnitski,
and Tribuzio \cite{AABPT}. In this book it has been proved that under growth and decay conditions on the kernels, every
$\Gamma$-limit is a local integral functional on Sobolev space. This result
has been applied, among others, to periodic and stochastic homogenization, to
perforated domains (see for instance \cite{AGL, BF, DFZ, SZZ}), and, very recently, to the derivation of
lower-dimensional models for nonlocal thin films \cite{AT26}. The same approach has also been extended to
energies leading to free discontinuity functionals \cite{BDSTV}.\\
\end{comment}

In this paper we consider the case in which the order parameter $u$ takes only
finitely many values; i.e. $u\colon\Omega\to Y=\{y_1,\dots,y_N\}\subset\mathbb{R}^m$, so
that every admissible configuration is a partition of $\Omega$ into the phases
$\{u=y_i\}$, for every $i=1,\dots,N$. Such a description is the natural setting whenever the system can only be represented by a finite number of states. 
Typical examples are the components of a mixture of immiscible fluids, as in the
vector-valued Cahn--Hilliard theory \cite{B90}, the admissible orientations of a small
droplet of a liquid crystal \cite{V}, and the spin values of a magnetic system of
Ising type \cite{ABCS, AG}.
The local theory for this kind of phenomena typically involves interfacial energies of the form 
\begin{equation}
\label{intro_1}
\int_{S_u}\varphi\big(x,u^+,u^-,\nu_u\big)\,d\mathcal{H}^{d-1},
\end{equation}
 where
$S_u$ is the jump set of $u$, $\nu_u$ is the
measure-theoretic normal to $S_u$, and $u^\pm$ are the traces of $u$ on the two
sides of $S_u$. A general variational analysis, studying  integral representation, lower semicontinuity,
relaxation, and homogenization of this kind of energies, was developed by Ambrosio and Braides
\cite{AB1,AB2}.

The question we address here is to understand how such interfacial
energies emerge from nonlocal models. In the discrete setting, this problem was investigated in \cite{AG} for Ising-type energies of spin systems on a lattice. In the continuum setting, this kind of nonlocal-to-local analysis  was developed in \cite{ADM},
where the asymptotic behaviour of the $s$-fractional perimeters as $s\to 1^-$ was studied. In particular, it was shown that $\displaystyle (1-s)[\chi_E]_{W^{s,1}(\Omega)}$ $\Gamma$-converges to
$\omega_{d-1}P(E,\Omega)$, where $P(\cdot,\Omega)$ denotes the classical  De Giorgi perimeter. More recently, in \cite{ACFS25} the authors considered energies on partitions
depending on the Riesz $s$-fractional gradient, for real-valued functions taking
finitely many values, and proved that, as $s\to1^-$, the $\Gamma$-limit is a
local interfacial energy as in \eqref{intro_1}.

In this paper, we consider a general class of functionals of the form \eqref{doubleinteps}
defined on  $L^1(\Omega;Y)$, where $f_\varepsilon\colon\Omega\times\Omega\times Y\times Y\to[0,+\infty]$
is a Borel function such that $f_\varepsilon(x,y,u,u)=0$. 
The last condition encodes the fact that uniform configurations minimize the energy. Setting
\begin{equation*}
    K_\varepsilon(x,y,u,v):=
    \begin{cases}
        0 &\text{if}\ u=v\\
        f_\varepsilon(x,y,u,v)\frac{|y-x|}{|v-u|}  &\text{if}\ u \neq v,
    \end{cases}
\end{equation*}
the integrand $f_\varepsilon$ can be written as
\begin{equation*}
    f_\e(x,y,u,v)=K_\varepsilon(x,y,u,v)\frac{|v-u|}{|y-x|}
\end{equation*}
and, in turn,
\begin{equation*}
{F}_\varepsilon(u)=\int_{\Omega}\int_{\Omega}K_\e(x,y,u(x),u(y)) \frac{|u(y)-u(x)|}{|y-x|}\, dx\, dy.
\end{equation*}
We assume that the kernels $K_\varepsilon$ are comparable with a family of radial kernels $\rho_\varepsilon$ with unit mass 
concentrating at the origin, namely
$$
C_1\rho_\varepsilon(y-x)\le K_\varepsilon(x,y,u,v)\le C_2\rho_\varepsilon(y-x)
\qquad\text{for every } u,v\in Y,\ u\neq v
$$
(see Section \ref{Setting of the problem} for the precise assumptions). Our main result, Theorem \ref{main thm}, establishes compactness and integral representation for this
class of energies. Namely, we prove that, up to
subsequences, the functionals $F_\varepsilon$ $\Gamma$-converge, as $\e\to 0^+$, to a functional of the form
$$
F(u)=\begin{cases}
\displaystyle\int_{S_u}\varphi\big(x,u^+,u^-,\nu_u\big)\,d\mathcal H^{d-1}
& \text{if } u\in BV(\Omega;Y),\\[2mm]
+\infty & \text{otherwise}.
\end{cases}
$$
 When $K_\varepsilon(x,y,u,v)=\rho_\varepsilon(y-x)$, the $\Gamma$-limit has a more explicit representation in terms of a measure $\mu$ associated to the kernels $\rho_\e$ (see Theorem \ref{Thm_gamma_limite_esplicito}). 
Our analysis covers both kernels of fractional
type, isotropic and anisotropic,
thus including the fractional perimeters considered in \cite{ADM} and their anisotropic versions
studied in \cite{L14,F25} (see Examples \ref{ex_frac_per}--\ref{ex:anisotropic}). As a consequence of our $\Gamma$-convergence results, in
Section \ref{Convergence of minimum problems.} we prove the convergence of minimum
problems with Dirichlet boundary conditions and with prescribed volume fractions of
the phases (Theorems \ref{convergence of minimum problem} and \ref{teo_volume_fratto}).

The proof of Theorem \ref{main thm}, carried out in Section \ref{Proof main thm},
relies on an abstract method originally exploited in the
homogenization theory for multiple integrals  \cite{DM,BD}. It amounts to applying a localization argument which allows us to regard our functionals and their $\Gamma$-limits as defined on functions and sets and then to prove that all the assumptions of an integral representation result for functionals defined on partitions proved in \cite{BFLM}  are fulfilled (see
Theorem \ref{representation thm}). A key point is to show that the $\Gamma$-limit is the restriction of a measure,
and in particular that it is subadditive (see Proposition \ref{Prop subadditivity}). In this respect, the hypothesis of mass concentration at the origin \eqref{eq:nuclei3} plays a crucial role in showing that the nonlocality of the approximating functionals disappears in the limit.

 An application of our  result is the homogenization of nonlocal energies
on partitions, which we address in Section \ref{Homogenization}. We consider kernels
of the form $$K_\varepsilon(x,y,u,v)=\varepsilon^{-d}K(x/\varepsilon,y/\varepsilon,u,v),$$
with $K$ $1$-periodic with respect to simultaneous translations of the two spatial variables (see \eqref{periodicita}), so that the heterogeneity of the medium and the range of the interactions are of the same order $\varepsilon$. 
We prove that the family of functionals $F_\varepsilon$ $\Gamma$-converges, as $\e\to 0^+$, to a homogeneous
interfacial energy, whose density is defined by an asymptotic nonlocal cell formula
(see Theorem \ref{teo:homogenization}).

Finally, in Section \ref{Approximation of non local functional} we show that the
 condition \eqref{eq:nuclei3} is essential to recover the locality of the $\Gamma$-limit. Indeed we exhibit two examples where \eqref{eq:nuclei3} fails and the $\Gamma$-limit is the sum of an interfacial energy and of a
genuinely nonlocal term.

\section{Notation and Preliminaries}

In what follows, given $x,y \in \rr^d$ we denote by $\langle x,y\rangle$ the standard scalar product in $\rr^d$ and we set $|x|=\sqrt{\langle x,x\rangle}$. Given $k\in\NN$, we denote by $\mathcal{H}^{k}$ the $k$-dimensional Hausdorff measure, by $\sigma_{k}$ the $k$-dimensional Hausdorff measure of the unit sphere $S^{k}:=\{x\in \rr^{k+1}:|x|=1\}$ in $\rr^{k+1}$, and by $\omega_{k}$ the volume of the unit ball in $\rr^{k}$.
We denote by $\mathcal{A}(\Omega)$ the class of open subsets in $\Omega$ and by
$\mathcal{A}^{\reg}(\Omega)$ the family of open subsets of $\Omega$ with Lipschitz boundary. 
For any $\nu \in S^{d-1}$, fix an orthonormal basis $\{\nu^1,\dots,\nu^{d-1},\nu\}$  of $\rr^d$, and let $Q_\nu:=\{x \in \rr^d: |\langle x, \nu^i\rangle |<1/2,\ \text{for}\ i=1, \dots,d-1,\  |\langle x,\nu\rangle|<1/2\}$ be the unit cube centered at the origin with two of its faces orthogonal to $\nu$. Given $r>0$ and $x_0\in \rr^d$, we set $Q_\nu(x_0,r):=x_0+rQ_\nu$, the cube centered at $x_0$, with side length $r$, and oriented in the direction $\nu$. Unless otherwise stated, $C$ will always denote a generic strictly positive constant that may change from line to line.

\begin{comment}
\begin{definition}
\label{classe densa}
     Given $\mathcal{A}(\Omega)$, a subclass $\mathcal{A}^{\mathcal{D}}(\Omega)\subset \mathcal{A}(\Omega)$ is said to be dense (or rich) in $\mathcal{A}(\Omega)$ if for every $A', A \in \mathcal{A}(\Omega)$ with $A'\subset \subset A$, there exists $A''\in \mathcal{A}^{\mathcal{D}}(\Omega)$ such that $A'\subset \subset A''\subset \subset A$.
\end{definition}
\end{comment}

\noindent For the reader's convenience, we recall the definition of $BV(\Omega;Y)$ and the compactness result related to this space, which can be deduced by the corresponding properties of sets of finite perimeter (see \cite{AB1,AFP,G}).

\begin{definition}
    Let $Y\subset \rr^{m}$ be a finite set whose elements are denoted by $y_1,\dots, y_N$. We denote by $BV(\Omega;Y)$ the class of Borel functions $u:\Omega \rightarrow Y$ whose level sets $\{u=y_i\}$ are sets of finite perimeter in $\Omega$. We denote by $S_u$ the Borel set
    \begin{equation*}
        S_u=\bigcup_{y_i \in Y}\partial ^{*}\{u=y_i\}=\bigcup_{y_i,y_j\in Y, y_i\neq y_j}\partial^{*}\{u=y_i\}\cap\partial^{*}\{u=y_j\},
    \end{equation*}
where, for a set $E\subset\rr^d$, of finite perimeter we denote by $\partial^{*} E$ the essential boundary of $E$.
    
Moreover, for $\mathcal{H}^{d-1}$-almost every $x \in S_u$ there exists a unit vector $\nu_u(x) \in S^{d-1}$ such that 
\begin{equation*}
    \lim_{r\rightarrow 0^+}r^{-d}|\{z \in B_{r}(x):\langle z-x,\nu_u(x)\rangle>0, u(z)\neq y_i\}|=0
\end{equation*}
and
\begin{equation*}
    \lim_{r\rightarrow 0^+}r^{-d}|\{z \in B_{r}(x):\langle z-x,\nu_u(x)\rangle<0, u(z)\neq y_j\}|=0.
\end{equation*}
We set $y_i=u^+(x)$ and $y_j=u^-(x)$. The triplet $(u^+,u^-,\nu_u)$ is defined $\mathcal{H}^{d-1}$-almost everywhere on $S_u$, and it is uniquely determined, up to a change of sign of $\nu_u$ and an interchange of $u^+$, $u^-$.
\end{definition}

\begin{theorem}[Compactness]
\label{teo_compactness}
    Let $u_k \in BV(\Omega;Y)$ such that
    \begin{equation*}
        \sup_k \mathcal{H}^{d-1}(S_{u_k})<+\infty.
    \end{equation*}
    Then there exists a subsequence (not relabeled) and $u \in BV(\Omega;Y)$ such that $u_k\rightarrow u$ in $L^1$.
\end{theorem}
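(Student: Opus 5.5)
The plan is to reduce the statement to the classical compactness theorem for sets of finite perimeter, applied one phase at a time. For each $i=1,\dots,N$ set $E^i_k:=\{u_k=y_i\}$. These sets form, up to null sets, a partition of $\Omega$, and $u_k=\sum_i y_i\chi_{E^i_k}$. The reduced boundary of $E^i_k$ in $\Omega$ is contained, up to $\mathcal H^{d-1}$-null sets, in $\partial^*E^i_k\subset S_{u_k}$. Hence
\[
P(E^i_k,\Omega)=\mathcal H^{d-1}(\partial^*E^i_k\cap\Omega)\le \mathcal H^{d-1}(S_{u_k})\le C
\]
uniformly in $k$, where we use De Giorgi's and Federer's theorems to identify the perimeter with the Hausdorff measure of the essential boundary. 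Since $|E^i_k|\le|\Omega|$, we get $\sup_k\|\chi_{E^i_k}\|_{BV(A)}<\infty$ for every $A\subset\subset\Omega$. If $\Omega$ is bounded and Lipschitz, the same bound holds on all of $\Omega$.

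By the compactness theorem in $BV$, each sequence $\chi_{E^i_k}$ has a subsequence converging in $L^1_{\rm loc}(\Omega)$, and a.e., to some $\chi_{E^i}$. The limit is a characteristic function because a.e. limits of $\{0,1\}$-valued functions are $\{0,1\}$-valued. Extracting successively for $i=1,\dots,N$, and then diagonalizing over an exhaustion of $\Omega$ by compactly contained open sets, yields a single subsequence along which all $N$ phases converge a.e. in $\Omega$.

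Pointwise a.e. convergence preserves the partition identities $\sum_i\chi_{E^i}=1$ and $\chi_{E^i}\chi_{E^j}=0$ for $i\neq j$ a.e. Therefore $u:=\sum_i y_i\chi_{E^i}$ is a Borel map into $Y$ with $u_k\to u$ a.e. Since $|u_k-u|\le \max_{i,j}|y_i-y_j|$ and $\Omega$ is assumed of finite measure, dominated convergence gives $u_k\to u$ in $L^1(\Omega)$. Finally, lower semicontinuity of the perimeter under $L^1_{\rm loc}$ convergence gives $P(E^i,\Omega)\le\liminf_k P(E^i_k,\Omega)\le C$. So every level set of $u$ has finite perimeter in $\Omega$, that is, $u\in BV(\Omega;Y)$. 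This also yields the bonus estimate $\mathcal H^{d-1}(S_u)\le \sum_i P(E^i,\Omega)\le NC$.

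The main obstacle is getting $L^1$ convergence on all of $\Omega$ rather than only locally. BV compactness requires either working on compactly contained subsets or a Lipschitz $\Omega$. The plan therefore uses the local version together with the uniform boundedness of $Y$ and $|\Omega|<\infty$, passing from a.e. convergence to global $L^1$ convergence by dominated convergence. This is where the standing assumption that $\Omega$ is bounded is used. The only other nontrivial point is the measure-theoretic identification $P(E,\Omega)=\mathcal H^{d-1}(\partial^*E\cap\Omega)$, which I will quote from the standard theory rather than prove.
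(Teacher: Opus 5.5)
Your proposal is correct and takes the route the paper indicates. The paper gives no detailed proof; it only says the result follows from the compactness and lower semicontinuity of sets of finite perimeter, and your phase-by-phase argument on the level sets $\{u_k=y_i\}$ is exactly that, with local compactness plus dominated convergence (using that $\Omega$ and $Y$ are bounded) correctly giving global $L^1$ convergence without needing $\partial\Omega$ Lipschitz.
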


\noindent We recall a characterization of measures as set functions due to De Giorgi and Letta (1977) {(see \cite[Theorem 10.2]{BD})}.
\begin{theorem}
\label{De Giorgi Letta Criterion}
    Let $\alpha : \mathcal{A}(\Omega)\rightarrow [0,+\infty]$ be an increasing set function. The following statements are equivalent:
    \begin{itemize}
        \item[(i)] $\alpha$ is the restriction to $\mathcal{A}(\Omega)$ of a Borel measure on $\Omega$;
        \item[(ii)] $\alpha$ is subadditive, superadditive and inner regular;
        \item[(iii)] the set function
        \begin{equation*}
            \beta(E)=\inf\{\alpha(U): U\in \mathcal{A}(\Omega),E\subseteq U\}
        \end{equation*}
        defines a Borel measure on $\Omega$.
    \end{itemize}
\end{theorem}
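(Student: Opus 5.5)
The plan is to prove the three implications (i)$\Rightarrow$(ii)$\Rightarrow$(iii)$\Rightarrow$(i).

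\emph{(i)$\Rightarrow$(ii).} Suppose $\alpha=\mu|_{\mathcal{A}(\Omega)}$ for a Borel measure $\mu$. Subadditivity, $\alpha(A\cup B)\le\alpha(A)+\alpha(B)$, is immediate. Superadditivity, $\alpha(A\cup B)\ge\alpha(A)+\alpha(B)$ for disjoint $A,B$, also follows from additivity of $\mu$. For inner regularity, take $A_k=\{x\in A:\dist(x,\partial A)>1/k,\ |x|<k\}$. Then $A_k\subset\subset A$ and $A_k\uparrow A$, so continuity from below of $\mu$ gives $\alpha(A)=\sup\{\alpha(A'):A'\subset\subset A\}$.

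\emph{(iii)$\Rightarrow$(i).} Since $\alpha$ is increasing, $\beta(U)=\alpha(U)$ for every open $U$. Hence $\beta$ is a Borel measure whose restriction to $\mathcal{A}(\Omega)$ is $\alpha$.

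\emph{(ii)$\Rightarrow$(iii).} This is the core of the proof, and I would argue through Carathéodory's criterion. First, $\beta$ is clearly increasing on all subsets.

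\textbf{Step 1: countable subadditivity on open sets.} Let $U=\bigcup_j U_j$ and $K\subset\subset U$. By compactness, $K$ is covered by finitely many $U_j$. I then need finite subadditivity on open sets in the form
\[
\alpha(A')\le\alpha(U_1)+\dots+\alpha(U_n)\quad\text{for }A'\subset\subset U_1\cup\dots\cup U_n .
\]
Here subadditivity is used exactly as stated (possibly in the strong form $\alpha(A)\le\alpha(B)+\alpha(C)$ whenever $A\subset\subset B\cup C$, which is the version used in \cite{BD}). Taking the supremum over $A'\subset\subset U$ and using inner regularity gives
\[
\alpha(U)\le\sum_j\alpha(U_j).
\]
It follows that $\beta$ is an outer measure on all subsets of $\Omega$.

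\textbf{Step 2: $\beta$ is a metric outer measure.} Let $E,F$ satisfy $\dist(E,F)>0$. Take the disjoint open neighbourhoods $E^\delta$ and $F^\delta$, with $\delta$ smaller than half the distance. For any open $U\supseteq E\cup F$, superadditivity and monotonicity give
\[
\alpha(U)\ge\alpha\big((U\cap E^\delta)\cup(U\cap F^\delta)\big)\ge\alpha(U\cap E^\delta)+\alpha(U\cap F^\delta)\ge\beta(E)+\beta(F).
\]
Taking the infimum over $U$ gives $\beta(E\cup F)\ge\beta(E)+\beta(F)$. By Carathéodory's criterion, all Borel sets are then $\beta$-measurable, so $\beta$ is a Borel measure.

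\textbf{Main obstacle.} The delicate point is Step 1, specifically the passage from a compact subset covered by finitely many open sets to a relatively compact open set $A'$ nested inside the union. This requires producing open sets $A'_j\subset\subset U_j$ whose union contains $K$, through a partition-type separation argument. It is there that the precise form of subadditivity assumed in \cite[Theorem 10.2]{BD} has to be matched.
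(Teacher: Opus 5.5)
The paper does not prove this statement; it quotes it from \cite[Theorem 10.2]{BD}, so there is no in-paper argument to compare against. Your proof is correct and complete, up to the normalization noted at the end.

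For (ii)$\Rightarrow$(iii) your route differs from the usual direct argument. That argument verifies Carath\'eodory's splitting condition $\beta(E)\ge\beta(E\cap A)+\beta(E\setminus A)$ for open $A$, using inner regularity a second time. For open $U\supseteq E$ and $B\Subset U\cap A$, superadditivity on the disjoint open sets $B$ and $U\setminus\overline B\supseteq U\setminus A$ gives $\alpha(U)\ge\alpha(B)+\beta(E\setminus A)$, and one then takes the supremum over $B$. You instead show that $\beta$ is a metric outer measure and invoke Carath\'eodory's criterion. Your route uses inner regularity only to get countable subadditivity. It uses superadditivity only for open sets at positive distance, since $\dist(E^\delta,F^\delta)\ge\dist(E,F)-2\delta>0$. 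So it actually proves the equivalence under that weaker superadditivity hypothesis. The direct route avoids the metric criterion as a black box and shows explicitly that open sets are $\beta$-measurable.

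The step you single out as the main obstacle is not one. If $A'\Subset U$ and $\overline{A'}\subseteq U_1\cup\dots\cup U_n$, then monotonicity and $n-1$ applications of two-set subadditivity give $\alpha(A')\le\alpha(U_1\cup\dots\cup U_n)\le\alpha(U_1)+\dots+\alpha(U_n)$. No separating sets $A'_j\Subset U_j$ are needed. For increasing $\alpha$, the conditions $\alpha(B\cup C)\le\alpha(B)+\alpha(C)$ and ``$\alpha(A)\le\alpha(B)+\alpha(C)$ whenever $A\subseteq B\cup C$'' are trivially equivalent. Under inner regularity, so is the variant with $A\Subset B\cup C$. There is therefore nothing to match against \cite{BD}.

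One genuine omission: for $\beta$ to be an outer measure you need $\beta(\emptyset)=\alpha(\emptyset)=0$. Superadditivity with $A=B=\emptyset$ only yields $\alpha(\emptyset)\in\{0,+\infty\}$. The function $\alpha\equiv+\infty$ is increasing and satisfies (ii), but neither (i) nor (iii). So the normalization $\alpha(\emptyset)=0$ is tacitly part of the statement, and your proof should assume it explicitly. It holds for the set functions $F(u,\cdot)$ to which the paper applies the result.
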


\noindent The following integral representation result for functionals defined on partitions has been proved in \cite[Theorem 3]{BFLM}.

\begin{theorem}
\label{representation thm}
    Let $F:BV(\Omega;Y)\times \mathcal{A}(\Omega) \rightarrow [0,+\infty)$ satisfy for every $(u,A)\in BV(\Omega;Y)\times \mathcal{A}(\Omega)$ the following hypotheses:
    \begin{itemize}
        \item[(i)] $F(u,\cdot)$ is the restriction to $\mathcal{A}(\Omega)$ of a Radon measure;
        \item[(ii)] $F(u,A)=F(v,A)$ whenever $u=v$ a.e. on $A\in \mathcal{A}(\Omega)$;
        \item[(iii)] $F(\cdot,A)$ is $L^{1}(\Omega;\rr^m)$ lower semicontinuous;
        \item[(iv)] there exist $C_1,C_2>0$ such that 
        \begin{equation*}
            C_{1}\mathcal{H}^{d-1}(S_u\cap A)\leq F(u,A)\leq C_{2}\mathcal{H}^{d-1}(S_u\cap A).
        \end{equation*}
    \end{itemize}
    Then for every $u \in BV(\Omega;Y)$ and $A \in \mathcal{A}(\Omega)$
    \begin{equation*}
        F(u,A)=\int_{S_{u}\cap A}\varphi(x,u^{+}, u^{-},\nu_{u})\, d\mathcal{H}^{d-1},
    \end{equation*}
    with
    \begin{equation}
    \label{densita}\varphi(x_{0},a,b,\nu)=\limsup_{r\rightarrow 0^+}\frac{m(u_{x_0,a,b,\nu},Q_{\nu}(x_{0},r))}{r^{d-1}}
    \end{equation}
    for all $x_{0} \in \Omega, a, b \in Y, \nu \in S^{d-1}$, where 
    
    \begin{equation}
    \label{eq: def u}
        u_{x_0,a, b, \nu}(x):=
        \begin{cases}
            a\ &\text{if} \ \langle x-x_0,\nu\rangle >0\\
            b\ &\text{if} \ \langle x-x_0,\nu\rangle\leq0
        \end{cases}
    \end{equation}
    and for any  $(v,A)\in BV(\Omega;Y)\times \mathcal{A}(\Omega)$,
    \begin{equation*}
        m(v,A):=\inf\{F(w,A):w\in BV(\Omega;Y), w=v\ \textit{in a neighborhood of}\  \partial A\}.
    \end{equation*}
\end{theorem}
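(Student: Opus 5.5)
The plan follows the standard blow-up approach of Fonseca--Müller and Bouchitté--Fonseca--Leoni--Mascarenhas, specialized to partitions. Fix $u\in BV(\Omega;Y)$ and set $\mu:=\mathcal H^{d-1}\llcorner(S_u\cap\Omega)$. By (i) and (iv), $F(u,\cdot)$ extends to a Radon measure $\lambda_u$ with $C_1\mu\le\lambda_u\le C_2\mu$. The Radon--Nikodym theorem then gives
$$F(u,A)=\int_{S_u\cap A}\frac{d\lambda_u}{d\mu}\,d\mathcal H^{d-1}.$$
It therefore suffices to show that, for $\mathcal H^{d-1}$-a.e. $x_0\in S_u$,
$$\frac{d\lambda_u}{d\mu}(x_0)=\lim_{r\to0}\frac{F(u,Q_\nu(x_0,r))}{r^{d-1}}=\varphi(x_0,u^+(x_0),u^-(x_0),\nu_u(x_0)),$$
where $\nu=\nu_u(x_0)$. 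The first equality is the Besicovitch differentiation theorem applied along oriented cubes, together with $\mu(Q_\nu(x_0,r))/r^{d-1}\to1$ by rectifiability of $S_u$. So the work lies in two inequalities comparing $F(u,Q_\nu(x_0,r))$ with $m(u_{x_0,a,b,\nu},Q_\nu(x_0,r))$, where $a=u^+(x_0)$ and $b=u^-(x_0)$.

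\emph{Step 1: $F(u,Q_r)\approx m(u,Q_r)$.} Write $Q_r=Q_\nu(x_0,r)$. Trivially $m(u,Q_r)\le F(u,Q_r)$. For the reverse inequality I would compare $F(w,Q_r)$ for competitors $w=u$ near $\partial Q_r$ with $F(u,Q_r)$, using lower semicontinuity (iii) together with the measure property. A convenient device is to show that the set function $A\mapsto F(u,A)-m(u,A)$ is small. Concretely, take $w_r$ almost optimal for $m(u,Q_r)$ and define $v_r:=w_r$ in $Q_r$ and $v_r:=u$ elsewhere. Since $w_r=u$ near $\partial Q_r$, $v_r$ is BV with no additional jump on $\partial Q_r$, so by (ii) and (i)
$$F(v_r,\Omega)=m(u,Q_r)+F(u,\Omega\setminus\overline{Q_r})+\text{error},$$
and we may also use (iv) on $\partial Q_r$. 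Applying this on a fine Vitali covering by disjoint cubes $Q_{r_i}(x_i)$ produces $v_\delta\to u$ in $L^1$. Lower semicontinuity then gives
$$F(u,\Omega)\le\liminf_\delta F(v_\delta,\Omega)\le\sum_i m(u,Q_{r_i}(x_i))+C\,\mu(\Omega\setminus\textstyle\bigcup_i Q_i).$$
Hence the density of $m(u,\cdot)$ with respect to $\mu$ dominates that of $\lambda_u$ at a.e. point. Combined with $m\le F$, this yields
$$\lim_r \frac{F(u,Q_r)-m(u,Q_r)}{r^{d-1}}=0\quad\text{for }\mu\text{-a.e. }x_0.$$

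\emph{Step 2: $m(u,Q_r)\approx m(u_{x_0,a,b,\nu},Q_r)$.} This is the main obstacle. By De Giorgi's blow-up, $\frac1{r^d}\int_{Q_r}|u-u_{x_0,a,b,\nu}|\to0$ and $\mathcal H^{d-1}(S_u\cap Q_r\setminus H)/r^{d-1}\to0$ for a.e. $x_0$, where $H$ is the hyperplane. To pass a competitor for one function into a competitor for the other, one modifies it in a thin frame $Q_r\setminus Q_{(1-\eta)r}$. Choose a level $t\in(1-\eta,1)$, by a coarea/Fubini argument, such that $u$ and $u_{x_0,a,b,\nu}$ differ on $\partial Q_{tr}$ in a set of small $\mathcal H^{d-1}$ measure, of order $o(r^{d-1})/(\eta r)$. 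Then glue: the new function is the competitor inside $Q_{tr}$ and the other function outside. The extra jump on $\partial Q_{tr}$ is controlled via (iv), as are the jumps of $u$ in the frame off $H$, while the part of the frame near $H$ costs at most $C\eta r^{d-1}$. Letting $r\to0$ and then $\eta\to0$ gives equality of the $\limsup$'s in both directions, and hence the density formula (with the limit existing along this subsequence, so $\limsup$ is correct).

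The delicate points are two. First, gluing along $\partial Q_{tr}$ must produce a function in $BV(\Omega;Y)$ that agrees with the prescribed datum near the boundary, which requires working in the slightly smaller cube and using locality (ii). Second, the Step 1 covering argument requires a uniform estimate of the cost of the boundary jumps, which is exactly what (iv) supplies.
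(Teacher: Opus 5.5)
The paper does not prove this statement; it imports it from \cite[Theorem 3]{BFLM}. Your outline is the global method of that reference: first identify $d\lambda_u/d\mu$ with the density of $m(u,\cdot)$ by a covering argument and lower semicontinuity, then compare $m(u,Q_r)$ with $m(u_{x_0,a,b,\nu},Q_r)$ by blow-up. So the strategy is right, but one claim is false and several steps need to be supplied.

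\textbf{Step 2: the false claim.} The assertion $\mathcal H^{d-1}(S_u\cap Q_r\setminus H)=o(r^{d-1})$ is wrong in general. If $S_u$ is a circle in $d=2$ and $H$ is its tangent line at $x_0$, then $S_u\cap H=\{x_0\}$ and the ratio tends to $1$. Rectifiability only gives $\mathcal H^{d-1}(S_u\cap Q_\nu(x_0,r))/r^{d-1}\to1$, i.e.\ weak$^*$ convergence of the blown-up jump measures to $\mathcal H^{d-1}\llcorner H$. That is all you need: the cost of $u$ in the frame is at most $C_2\mathcal H^{d-1}(S_u\cap\overline{Q_r}\setminus Q_{(1-\eta)r})\le C\eta r^{d-1}+o(r^{d-1})$, with no splitting into parts near and away from $H$. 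Two smaller corrections in the same step:
\begin{itemize}
\item The coarea choice of $t$ gives a mismatch on $\partial Q_{tr}$ of size $o(r^d)/(\eta r)=o(r^{d-1})/\eta$, not $o(r^{d-1})/(\eta r)$.
\item To pass from $Q_{tr}$ back to $Q_r$, use in one direction that $\lim_s m(u,Q_s)/s^{d-1}$ exists by Step 1. For $u_{x_0,a,b,\nu}$ you only have the $\limsup$ over all scales, which is why the two comparisons yield exactly the $\limsup$ formula for $\varphi$.
\end{itemize}

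\textbf{Step 1: what (iv) actually does.} It is not needed to pay for jumps on $\partial Q_i$; there are none, since $w_i=u$ near $\partial Q_i$. The lower bound in (iv) is what controls the patched function. Choose $F(w_i,Q_i)\le m(u,Q_i)+\delta\mu(Q_i)$. Then $\sum_i|Dw_i|(Q_i)\le C\,(F(u,\Omega)+\delta\mu(\Omega))<\infty$, which gives the following.
\begin{itemize}
\item The series $v_\delta-u=\sum_i(w_i-u)\chi_{Q_i}$ converges in $BV$, so $v_\delta\in BV(\Omega;Y)$.
\item $D(v_\delta-u)$ does not charge $\Omega\setminus\bigcup_iQ_i$, so no new jump appears there. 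The upper bound in (iv), extended to Borel sets by outer regularity, then controls the remainder.
\item With the Poincar\'e inequality for functions compactly supported in cubes of side $<\delta$, you get $\|v_\delta-u\|_{L^1}\le C\delta$.
\end{itemize}

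\textbf{Step 1: the pointwise conclusion.} This does not follow from a single covering inequality on $\Omega$. Argue by contradiction: take a compact $K$ with $\mu(K)>0$ on which $\liminf_r m(u,Q_r)/r^{d-1}<d\lambda_u/d\mu-t$. Use a fine covering of $S_u$ in which the cubes centered in $K$ satisfy $m(u,Q)\le\lambda_u(Q)-\tfrac t2\mu(Q)$ and the remaining cubes lie in $\Omega\setminus K$. Since the orientation $\nu_u(x_i)$ varies with the center, this needs the Morse--Besicovitch version of Vitali. This gives $\sum_i m(u,Q_i)\le\lambda_u(\Omega)-\tfrac t2\mu(K)$, which contradicts the lower semicontinuity inequality.

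With these repairs your sketch becomes a complete proof along the lines of \cite{BFLM}.
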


\noindent In the sequel, we will use the notation $u_{a,b,\nu}$ to denote the function 
defined in \eqref{eq: def u} with $x_0=0$.

\section{Setting of the problem and main result}\label{Setting of the problem}

Let $\Omega \subset\rr^d$ be a bounded open set with Lipschitz boundary and $Y\subset \rr^m$ be a finite set endowed with the discrete topology. Given $\varepsilon>0$,  let $K_\varepsilon:\Omega\times \Omega\times Y\times Y \rightarrow [0,+\infty ]$ be a Borel function
and let
$F_\e:L^1(\Omega;Y)\to[0,+\infty]$ be defined by

\begin{equation}\label{rescale_energy}
{F}_\varepsilon(u)=\int_{\Omega}\int_{\Omega}K_\e(x,y,u(x),u(y)) \frac{|u(y)-u(x)|}{|y-x|}\, dx\, dy.
\end{equation}

\noindent We fix the following set of hypotheses on $K_{\varepsilon}(x,y,u,v)$: there exists a  family of Borel functions $\rho_\varepsilon:\rr^d\to [0,+\infty)$ satisfying
   
\begin{empheq}[left=\empheqlbrace]{alignat=2}
&\rho_{\varepsilon}(\cdot)\  \text{is radial}
&\qquad&\forall \varepsilon>0,
\label{eq:nuclei1}\\
&
\int_{\rr^{d}}\rho_{\varepsilon}(z)\,dz
 = 1
&& \forall \varepsilon>0,
\label{eq:nuclei2}\\
&\lim_{\varepsilon\to0}
\int_{|z|>\eta}\rho_{\varepsilon}(z)\,dz
=0
&& \forall \eta>0,
\label{eq:nuclei3}
\end{empheq}
   such that for a.e. $x,y\in\Omega$  and for every $(u,v)\in Y\times Y$  with $u\neq v$,
it holds:
\begin{itemize}
    \item[(H1)]\textit {(Coerciveness)} $\exists \,C_1>0$ such that
    $$ K_{\varepsilon}(x,y,u,v)\geq C_1\rho_{\e}(y-x), $$
 \item[(H2)] \textit{(Growth)} $ \exists\, C_2>0$ such that $$K_{\varepsilon}(x,y,u,v)\leq C_2 \rho_{\e}(y-x).$$   
\end{itemize}

\noindent  We also define a localized version of our energies. For any $A \in \mathcal{A}(\Omega)$ and for $ u \in L^1(A;Y)$ we set
\begin{equation*}  
        F_{\varepsilon}(u,A):=
    \displaystyle 
    \int_{A}\int_{A}K_{\varepsilon}(x,y,u(x),u(y))\frac{|u(y)-u(x)|}{|y-x|}\, dx\, dy.
\end{equation*}
Notice that, when $A=\Omega$, the localized functional agrees with
the functional defined in \eqref{rescale_energy}, namely
$F_\varepsilon(u,\Omega)=F_\varepsilon(u)$.
We will denote by $F'(\cdot,A)$ and $F''(\cdot,A)$ the $\Gamma$-$\displaystyle \liminf_{\varepsilon\rightarrow 0}$ and $\Gamma$-$\displaystyle \limsup_{\varepsilon\rightarrow 0}$ of $F_{\varepsilon}(\cdot,A)$  with respect to the $L^{1}(A;\rr^m)$ strong topology, respectively.

\begin{comment}
\begin{remark}
    For the sake of simplicity we prefer to deal with the class $\mathcal{A}^{\reg}(\Omega)$, extending suitably $F'(\cdot,A), F''(\cdot,A)$ to $\mathcal{A}(\Omega)$ as last step.
\end{remark}
\end{comment}

 We now state the main result of the paper.
\begin{theorem}
\label{main thm}
    Let $F_{\varepsilon}$ be defined by \eqref{rescale_energy}, with $K_{\varepsilon}(x,y,u,v)$ satisfying {\rm (H1)}-{\rm (H2)}. Then for any sequence $\varepsilon_j\rightarrow0$ there exist a subsequence $\varepsilon_{j_k}$ and a function $\varphi:\Omega \times Y \times Y\times  S^{d-1}\rightarrow [0,+\infty)$ such that the functionals $F_{\varepsilon_{j_k}}$ $\Gamma$-converge with respect to the $L^{1}(\Omega;\rr^m)$ strong topology to the functional $F:L^{1}(\Omega;Y)\rightarrow [0,+\infty]$ defined by 
    \begin{equation}
        F(u):=
        \begin{cases}
            \displaystyle
            \int_{S_{u}}\varphi(x, u^{+}, u^{-},\nu_{u})\, d\mathcal{H}^{d-1}\ &\text{if}\ u\in BV(\Omega;Y)\\
            +\infty \ &\text{otherwise}.
        \end{cases}
        \label{gamma limite}
    \end{equation}
    
    \noindent Moreover, for every $u \in BV(\Omega;Y)$ and $A \in \mathcal{A}^{\reg}(\Omega)$, it holds
    \begin{equation*}
        \Gamma(L^1)\,\text{-}\lim_{k} F_{\varepsilon_{j_k}}(u,A)=F(u,A):=\int_{S_{u}\cap A}\varphi(x,u^{+},u^{-},\nu_{u})\, d\mathcal{H}^{d-1}.
    \end{equation*}
\end{theorem}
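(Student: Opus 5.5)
We follow the localization method of \cite{DM,BD} and verify the hypotheses of Theorem \ref{representation thm}.

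\textbf{Step 1: compactness and bounds.} By (H1)--(H2), and since $|u(y)-u(x)|$ takes only finitely many nonzero values bounded above and below on $Y$, we have
\[
F_\e(u,A)\asymp G_\e(u,A):=\int_A\int_A \rho_\e(y-x)\frac{|u(y)-u(x)|}{|y-x|}\,dx\,dy .
\]
Here $G_\e$ is the Bourgain--Brezis--Mironescu/Ponce functional with $p=1$. By Ponce's compactness and $\Gamma$-convergence results (\cite{P2} and Dávila \cite{D02}), sequences with $\sup_\e F_\e(u_\e,A)<+\infty$ are precompact in $L^1$. Their limits lie in $BV(A;Y)$, and
\[
c\,\mathcal H^{d-1}(S_u\cap A)\le F'(u,A)\le F''(u,A)\le C\,\mathcal H^{d-1}(S_u\cap A)
\]
for $A$ Lipschitz. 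The upper bound comes from the pointwise limit of $G_\e(u,A)$ for $u\in BV$, which uses radiality, unit mass and concentration; for $Y$-valued $u$ one has $|Du|(A)\asymp\mathcal H^{d-1}(S_u\cap A)$. The lower bound is the $\Gamma$-liminf of $G_\e$. This gives (iv), and shows that the domain of any $\Gamma$-limit is $BV(\Omega;Y)$.

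\textbf{Step 2: properties of $F'$ and $F''$.} Inner regularity, superadditivity on disjoint open sets and locality (ii) follow from the structure of the double integral. For disjoint $A,B$ we have $F_\e(u,A\cup B)\ge F_\e(u,A)+F_\e(u,B)$. Inner regularity comes from the upper bound of Step 1 combined with the fundamental estimate below. Lower semicontinuity (iii) is automatic for $\Gamma$-limits.

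\textbf{Step 3: fundamental estimate (main obstacle).} The key step is subadditivity in the form
\[
F''(u,A)\le F''(u,A')+F''(u,B)\qquad\text{for } A''\subset\subset A'\subset A \text{ and } B\supset A\setminus\overline{A''}.
\]
Take recovery sequences $u_\e$ on $A'$ and $v_\e$ on $B$, both converging to $u$. Partition the strip $A'\setminus\overline{A''}$ into $M$ layers $S_i$ of width $\delta/M$, and glue $w_\e=u_\e$ inside layer $i$ and $w_\e=v_\e$ outside it. Since $Y$-valued functions cannot be convexly combined, the cutoff is a sharp switch across a layer boundary.

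The energy of $w_\e$ splits into three parts:
\begin{itemize}
\item[(a)] interactions within $A'$ and within $B$, bounded by $F_\e(u_\e,A')+F_\e(v_\e,B)$;
\item[(b)] long-range interactions with $|x-y|>\eta$, controlled by $C\eta^{-1}|\Omega|\int_{|z|>\eta}\rho_\e\to0$ by \eqref{eq:nuclei3};
\item[(c)] short-range cross interactions across the switching interface, with $x$ on one side, $y$ on the other and $|x-y|\le\eta$.
\end{itemize}
For (c), use $|w(y)-w(x)|\le|u_\e(y)-u_\e(x)|+|u_\e(x)-v_\e(x)|$. The first term is absorbed into the energy of $u_\e$ on a neighbourhood of the layer. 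The second term is bounded by
\[
C\int_{S_i^\eta}|u_\e-v_\e|(x)\int\frac{\rho_\e(z)}{|z|}\chi_{\{x+z \text{ across}\}}\,dz\,dx .
\]
Here $\int\rho_\e(z)|z|^{-1}\chi_{\{\text{cross a hyperplane from distance }t\}}$, integrated in $t$, yields a bounded constant, because the crossing probability is of order $|z|/t$-type. To avoid a pointwise dependence on $t$, average over a random translation of the switching interface inside the layer (or over $i$). Averaging gives a bound of order $C\frac{M}{\delta}\|u_\e-v_\e\|_{L^1(A'\setminus A'')}\to0$.

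Summing over $i$ and choosing the best layer, the energy in the layers contributes at most $\frac CM\,\sup_\e\big(F_\e(u_\e,A')+F_\e(v_\e,B)\big)$. Letting $\e\to0$, then $M\to\infty$, then $\eta\to0$ gives subadditivity. This averaged cross-term estimate is the delicate point, and it is exactly where \eqref{eq:nuclei3} is essential.

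\textbf{Step 4: conclusion.} By a standard diagonal/Kuratowski compactness argument on a countable dense family of open sets, extract a subsequence $\e_{j_k}$ such that $F'=F''$ on that family. Inner regularity then extends this equality to all $A\in\mathcal A(\Omega)$ for the inner-regular envelope. By De Giorgi--Letta (Theorem \ref{De Giorgi Letta Criterion}), $F(u,\cdot)$ is a measure. Theorem \ref{representation thm} then gives the density $\varphi$ via \eqref{densita}.

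For $A\in\mathcal A^{\reg}(\Omega)$, the $\Gamma$-limit of $F_\e(u,A)$ coincides with the inner-regular envelope. Indeed, the Lipschitz boundary makes $\mathcal H^{d-1}(S_u\cap(A\setminus A''))$ small. Combined with the upper bound of Step 1 and subadditivity, this proves the second statement, and taking $A=\Omega$ proves \eqref{gamma limite}.
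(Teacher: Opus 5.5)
Your proposal is correct and follows the paper's route. The paper also obtains the two-sided bounds from \eqref{GFG} and the reference functionals $G_\e$. It proves the fundamental estimate by a sharp switch across a level set of the distance function, chosen first by a continuous average of the interface position inside a layer (the paper's $\int_0^1|D^1_z\chi_{\{\varphi^k_\eta\le t\}}|\,dt=|D^1_z\varphi^k_\eta|\le 1/\eta$) and then by averaging over layers. It concludes with localization, De Giorgi--Letta and Theorem \ref{representation thm}. Your crude bound $C\eta^{-1}|\Omega|\int_{|z|>\eta}\rho_\e$ on the long-range cross terms (cf.\ Remark \ref{rmk_truncated}) even makes the paper's coarea selection of levels with controlled perimeter unnecessary.

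A few small corrections. First, the compactness claim in Step 1 is not needed for the $\Gamma$-convergence statement, and it fails in $d=1$ without \eqref{additional_kernel_assumption}. Second, the $\frac{M}{\delta}\|u_\e-v_\e\|_{L^1}$ bound comes from the continuous average over the interface position, not from averaging over $i$. Third, the short-range cutoff should be tied to the layer width ($\eta\le\delta/M$, as the paper does with a single parameter). Otherwise the $\eta$-neighbourhoods of the (non-extreme) layers do not overlap boundedly or stay inside $A'\cap B$, and the $\frac{C}{M}$ bound on the transition energy does not hold as written.
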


\begin{remark}
\label{rmk_truncated}
    Given $R>0$ set 
    \begin{equation*}
    F^R_{\e}(u,A)=\int_{\{(x,y)\in A \times A:|y-x|\leq R\}
    }K_\e(x,y,u(x),u(y)) \frac{|u(y)-u(x)|}{|y-x|}\, dx\, dy.
\end{equation*}
We note that for every $u \in L^1(A;Y)$, by {\rm (H2)} it holds
    \begin{equation*}
    \begin{split}
         \int_{{\{(x,y)\in A\times A:|y-x|> R\}}
    }&K_\e(x,y,u(x),u(y)) \frac{|u(y)-u(x)|}{|y-x|}\, dx\, dy\\
    &\leq C_2\int_{{\{(x,y)\in A \times A:|y-x|> R\}}
    }\rho_\e(y-x) \frac{|u(y)-u(x)|}{|y-x|}\, dx\, dy\\
    &\leq \frac{C}{R}|A|\int_{|z|>R}\rho_\e(z)\, dz.
    \end{split}
    \end{equation*}
    Hence, by \eqref{eq:nuclei3},
    \begin{equation*}
        0\leq \lim_{\e\to 0}\sup_{u \in L^{1}(A;Y)}(F_\e(u,A)-F_\e^{R}(u,A))\leq \lim_{\e \to 0}\frac{C}{R}|A|\int_{|z|>R}\rho_\e(z)\, dz=0.
    \end{equation*}
We deduce that $F_\e$ and $F_\e^R$ share the same asymptotic behavior. In particular, Theorem \ref{main thm} still holds if we replace $F_\e$ by $F_\e^R$.
\end{remark}

\noindent We postpone the proof of Theorem \ref{main thm}, as it will be a consequence of some propositions which show that our limit functionals satisfy all the hypotheses of Theorem \ref{representation thm}. To this end,
we also introduce  the special class of functionals $G_\e(\cdot, A): L^1(A;Y)\to [0,+\infty]$ defined as    
\begin{equation}\label{definizione di G}
       G_{\varepsilon}(u,A):=\int_{A}\int_{A}\rho_{\varepsilon}\left(y-x\right)\frac{|u(y)-u(x)|}{|y-x|}\, dx\, dy,  
\end{equation}
whose asymptotic behaviour as $\e\to 0$ will be examined in Section \ref{explicit_example}. {When $A=\Omega$, we simply write $G_\e(u)$ in place of $G_\e(u,\Omega)$.}

\noindent Note that, by (H1) and (H2), for any $A \in \mathcal{A}(\Omega)$ and $ u \in L^1(A;Y)$ it holds
\begin{equation}\label{GFG}
C_1G_\e(u,A)\leq F_\e(u,A)\leq C_2G_\e(u,A).     
\end{equation}

\section{The \texorpdfstring{$\Gamma$-}-limit of a class of reference energies}
\label{explicit_example}

Since, by \eqref{GFG}, our family of nonlocal functionals $F_\e$ are estimated by the functionals $G_\e$ defined in \eqref{definizione di G}, we first examine the asymptotic behaviour of the latter.  
Our analysis strongly relies on the $\Gamma$-convergence analysis developed in \cite{P2} for functionals of the form  \eqref{definizione di G}, that in our case are defined in $L^1(\Omega;Y)$.

\noindent We associate with the kernel
$\rho_\varepsilon:\rr^d\to[0,+\infty)$ the measure $\mu_\varepsilon$ on $S^{d-1}$
defined by
\begin{equation*}
\mu_\varepsilon(E):=\int_{R_+E}\rho_\varepsilon(z)\, dz \ \quad \ \text{for each}\ \  E\subset S^{d-1},
\end{equation*}
where $R_+E:=\{qx: q\geq 0\ \text{and}\ x \in E\}$ is the cone generated by $E$ with respect to the origin.  
Note that 
\begin{equation*}
\mu_\varepsilon(E)=
\int_E
\int_0^{+\infty}
\rho_\varepsilon(t\sigma)t^{d-1}
\,dt\,d\mathcal H^{d-1}(\sigma).
\end{equation*}
Therefore, 
\begin{equation}
\label{mu_hatK_representation}
\mu_\varepsilon
=
a_\varepsilon(\cdot)\,\mathcal H^{d-1}\lfloor_{S^{d-1}},
\qquad a_\varepsilon(\sigma):=
\int_0^{+\infty}
\rho_\varepsilon(t\sigma)t^{d-1}
\,dt.
\end{equation}
Moreover, if $\rho_\e$ is radial, then $\mu_\e=\displaystyle\frac{\mu_\e(S^{d-1})}{\sigma_{d-1}} {\cal H}^{d-1}\lfloor_{S^{d-1}}$.

If $\rho_\e$ satisfies \eqref{eq:nuclei2}, the family $(\mu_\e)$ is bounded in $\mathcal{M}(S^{d-1})$ (the space of Radon measures on $S^{d-1}$), so there exists a sequence $\e_j\to 0$ and $\mu\in \mathcal{M}({S}^{d-1})$  such that
$\mu_{\e_j}\stackrel{*}\rightharpoonup \mu$ in $\mathcal{M}(S^{d-1})$. In particular, $\mu \geq 0$ on $S^{d-1}$ and $\mu(S^{d-1})=1$.
\begin{theorem}\label{Thm_gamma_limite_esplicito}
Let $A \in \mathcal{A}^{\reg}(\Omega)$ and $G_\varepsilon(u,A)$ be defined by \eqref{definizione di G} with $\rho_\varepsilon:\rr^d\to[0,+\infty)$ satisfying \eqref{eq:nuclei3}.
Suppose that the measures $\mu_\varepsilon$ weakly$^*$ converge to a measure $\mu \in \mathcal{M}(S^{d-1})$ such that  $\spann(\supp(\mu))=\rr^d.$
Then
\begin{equation}\label{limitG}
    \Gamma(L^1)\,\text{-}\lim_{\varepsilon\rightarrow 0}G_\varepsilon(u,A)=
    \begin{cases}
    \displaystyle 
        \int_{S_u\cap A} |u^{+}-u^{-}|\varphi(\nu_u)\, d\mathcal{H}^{d-1}&\text{if}\ u\in BV(A;Y)\\
        +\infty &\text{otherwise,}
    \end{cases}
\end{equation}
  where 
\begin{equation}\label{density}
\varphi(\nu)=\displaystyle\int_{S^{d-1}}|\langle \nu, \sigma \rangle|\, d\mu(\sigma).
\end{equation}
Moreover, if $u\in BV(A;Y)$, 
\begin{equation}\label{puntuale}
\Gamma(L^1)\,\text{-}\lim_{\varepsilon\rightarrow 0}G_\varepsilon(u,A)=
\lim_{\varepsilon\rightarrow 0}G_\varepsilon(u,A).
\end{equation} 

\end{theorem}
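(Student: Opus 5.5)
The plan is to reduce everything to Ponce's $\Gamma$-convergence theorem \cite{P2} for the BV case ($p=1$), applied componentwise to vector-valued functions, and then to specialise the limit to functions with values in $Y$.

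\textbf{Ponce's result.} For $u\in BV(A;\rr^m)$, $A$ Lipschitz and bounded, Ponce shows two things for kernels with unit mass concentrating at the origin and with $\mu_\e\stackrel{*}\rightharpoonup\mu$:
\begin{itemize}
\item[(a)] the pointwise limit $\lim_\e G_\e(u,A)=\int_A\int_{S^{d-1}}|Du(x)\sigma|\,d\mu(\sigma)$;
\item[(b)] the $\Gamma$-$\liminf$ inequality with the same right-hand side.
\end{itemize}
The vector norm $|\cdot|$ in $\rr^m$ is handled by reading $|Du\,\sigma|$ as the total variation measure of the directional derivative $D_\sigma u=Du\,\sigma$.

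The unit mass \eqref{eq:nuclei2} is implicitly assumed in the statement, as in the definition of $\mu$. The hypothesis $\spann(\supp\mu)=\rr^d$ guarantees compactness and coercivity. Indeed, $\varphi(\nu)=\int|\langle\nu,\sigma\rangle|\,d\mu$ is continuous and strictly positive on $S^{d-1}$, hence bounded below by some $c>0$. Moreover, Ponce's compactness theorem \cite{P} (or the argument behind it) gives: if $\liminf_\e G_\e(u_\e,A)<\infty$ and $u_\e\to u$ in $L^1$, then $u\in BV$. Since $u$ takes values in $Y$ a.e., we get $u\in BV(A;Y)$. This settles the ``$+\infty$ otherwise'' case.

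\textbf{Lower bound.} Let $u_\e\to u$ with bounded energy. Then $u\in BV(A;Y)$, and (b) gives
$$\liminf_\e G_\e(u_\e,A)\ge \int_A\int_{S^{d-1}} d|D_\sigma u|\,d\mu(\sigma).$$
For $u\in BV(A;Y)$ the derivative is purely jump: $Du=(u^+-u^-)\otimes\nu_u\,\mathcal H^{d-1}\lfloor S_u$. Hence
$$|D_\sigma u|=|u^+-u^-|\,|\langle\nu_u,\sigma\rangle|\,\mathcal H^{d-1}\lfloor S_u.$$
Integrating in $\sigma$ via Fubini yields $\int_{S_u\cap A}|u^+-u^-|\varphi(\nu_u)\,d\mathcal H^{d-1}$.

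\textbf{Upper bound and \eqref{puntuale}.} Take the constant recovery sequence $u_\e=u$ and use the pointwise convergence (a). It gives the same value, so the $\Gamma$-$\limsup$ is at most the claimed functional. Combined with the lower bound, this yields both \eqref{limitG} and \eqref{puntuale}.

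\textbf{Main obstacle.} The delicate step is to verify (a) in the generality needed here: vector-valued $u$ with only BV regularity, a nonradial limit measure $\mu$, and no assumption beyond \eqref{eq:nuclei3} on $\rho_\e$.

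For the upper half of (a), I would use an extension of $u$ to a neighbourhood of $A$, which exists because $\partial A$ is Lipschitz. Then apply the inequality
$$\int_A|u(x+z)-u(x)|\,dx\le |z|\,|D_{z/|z|}u|(A_{|z|}),$$
integrate against $\rho_\e(z)/|z|$, and pass to the limit using weak$^*$ convergence of $\mu_\e$. For this, the map $\sigma\mapsto|D_\sigma u|(\overline{A})$ must be continuous on $S^{d-1}$; it is, since it is Lipschitz in $\sigma$. One also needs $|Du|(\partial A)=0$ for the extension, and this holds for a BV extension across a Lipschitz boundary.

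For the lower half of (a), one would mollify and use lower semicontinuity. This is essentially Ponce's argument, and I would cite it rather than redo it. The point is that (a) is needed only for $u\in BV(A;Y)$, and there no boundary-layer issue arises beyond $|Du|(\partial A)=0$.
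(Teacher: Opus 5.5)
Your overall route is the paper's. The upper bound and \eqref{puntuale} come from the constant sequence, an extension $\bar u$ with $|D\bar u|(\partial A)=0$, the translation estimate, \eqref{eq:nuclei3} for the tail, and weak$^*$ convergence of $\mu_\e$ tested against the continuous map $\sigma\mapsto|Du\,\sigma|(A)$. The lower bound comes from Ponce's mollification argument, the jump formula and Fubini.

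\textbf{The gap.} One step does not hold as you justify it: that an $L^1$-limit $u$ of a sequence with bounded energy lies in $BV(A;Y)$. You take this from Ponce's compactness theorem in \cite{P}, and you claim that $\spann(\supp\mu)=\rr^d$ guarantees compactness. Neither is right here.
\begin{itemize}
\item As recalled in Theorems~\ref{compactness} and \ref{compactness1}, that theorem is for radial kernels. Theorem~\ref{Thm_gamma_limite_esplicito} allows non-radial $\rho_\e$.
\item In $d=1$ the spanning condition holds for every nonzero $\mu$, yet compactness is available only under the extra assumption \eqref{additional_kernel_assumption}.
\item Spanning does not give compactness in any dimension. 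Take normalized kernels uniform on balls of radius $\e^2$ around $\pm\e e_1,\dots,\pm\e e_d$. They satisfy the spanning condition. Yet the $\e$-periodic functions $w(x/\e)$, with $w$ a nonconstant $\mathbb Z^d$-periodic indicator, have bounded energy and no strongly convergent subsequence.
\end{itemize}

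\textbf{The fix.} Compactness is not needed at all. You only need that strong limits with bounded energy are in $BV$, and this is exactly where the spanning hypothesis enters in the paper.
\begin{itemize}
\item For $A'\Subset A$ and small $\delta$, Jensen's inequality (\cite[Lemma 4]{P2}) gives $G_\e(u_\e,A)\ge G_\e(\vartheta_\delta*u_\e,A')$.
\item Since $\vartheta_\delta*u_\e\to\vartheta_\delta*u$ in $C^2(\overline{A'})$, the smooth case of \cite{P2} gives $\liminf_\e G_\e(u_\e,A)\ge\int_{A'}f_\mu(\nabla(\vartheta_\delta*u))\,dx\ge\alpha\int_{A'}|\nabla(\vartheta_\delta*u)|\,dx$.
\item Lower semicontinuity of the total variation then yields $|Du|(A')\le\alpha^{-1}\liminf_\e G_\e(u_\e,A)$.
\end{itemize}
Your bound $\varphi\ge c>0$ already gives this coercivity on matrices. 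Indeed, $|M\sigma|\ge|\langle M^i,\sigma\rangle|$ for each row $M^i$, so $f_\mu(M)\ge c\max_i|M^i|$. Continuing the same chain with the relaxation of $v\mapsto\int_{A'}f_\mu(\nabla v)\,dx$ to $\int_{S^{d-1}}|Dv\,\sigma|(A')\,d\mu(\sigma)$ yields your (b). So nothing beyond the smooth case of \cite{P2} needs to be cited.

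\textbf{Minor points.} Applying the scalar theory componentwise does not produce the Euclidean density $|u^+-u^-|\,\varphi(\nu_u)$; you need the $\rr^m$-valued statement directly. Also, \eqref{eq:nuclei2} is not assumed in the theorem. This is harmless, since $\int\rho_\e=\mu_\e(S^{d-1})\to\mu(S^{d-1})>0$ (Remark~\ref{misurafinita}).
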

\begin{remark}\label{misurafinita}
Under the assumptions of Theorem \ref{Thm_gamma_limite_esplicito},  we see that, for $\e$ small enough,
$$
0< c_1\le\int_{\rr^d}\rho_\e(z)\, dz\le c_2,
$$
for some positive constants $c_1,c_2$ independent on $\e$.
 Indeed,
$$
\int_{\rr^d}\rho_\e(z)\, dz=\mu_\e(S^{d-1})\to \mu(S^{d-1})\in (0,+\infty).
$$
It would not be  restrictive to assume that $\rho_\e$ satisfies \eqref{eq:nuclei2}, up to replacing $\rho_\e$ with $\tilde\rho_\e:=\displaystyle\frac{1}{\mu_\e(S^{d-1})}\rho_\e$.
\end{remark}

\begin{remark}\label{rmk:coercive}
    Given $\mu$ as in Theorem \ref{Thm_gamma_limite_esplicito}, consider the function $f_\mu: \mathbb R^{m\times d}\to [0,+\infty)$ defined by
\begin{equation}\label{fmu}
f_\mu(M):=
\int_{S^{d-1}} |M\sigma|\,d\mu(\sigma).
\end{equation}
We note that $f_\mu$ is convex, continuous, and positively one-homogeneous. Moreover, if $f_\mu(M)=0$ then $M\sigma=0$ for $\mu$-a.e. $\sigma.$ By continuity, this holds for every $\sigma\in$ supp$\,\mu$.  
Since $
\operatorname{span}(\operatorname{supp}\mu)=\mathbb R^d$, it follows that $M=0$.
Therefore, by the compactness of the unit sphere in $\mathbb R^{m\times d}$,
there exists $\alpha>0$ such that
\begin{equation}
\label{matrix_coercivity}
    \alpha |M|
    \leq
    \int_{S^{d-1}} |M\sigma|\,d\mu(\sigma)
    \qquad \forall M\in\mathbb R^{m\times d}.
\end{equation}
In particular
\begin{equation*}
0<\alpha\leq \varphi(\nu)\qquad\forall\nu\in S^{d-1}.
\end{equation*}
Note, moreover, that
$$
 \varphi(\nu)\leq \mu(S^{d-1})\qquad\forall\nu\in S^{d-1}.
$$
\end{remark}
\begin{remark}\label{rmk: rhocoercive}
If $\rho_\e$ is radial and \eqref{eq:nuclei2} is satisfied, then
$\operatorname{supp}\mu=S^{d-1}$. Hence, the spanning condition in
Theorem~\ref{Thm_gamma_limite_esplicito} is automatically satisfied,
and \eqref{matrix_coercivity} follows.

Motivated by the cone condition in \cite[Section~1.3]{P2},
we give a more general sufficient condition for
$\operatorname{span}(\operatorname{supp}\mu)=\rr^d$.
Suppose that $\mu_\e\stackrel{*}{\rightharpoonup}\mu$ as $\e\to0$.
Assume that there exist linearly independent vectors
$v_1,\dots,v_d\in\rr^d$ and $\delta>0$ such that
$$
\limsup_{\e\to0}
\int_{C_\delta(v_i)}\rho_\e(z)\,dz>0
\qquad\text{for every }i=1,\dots,d,
$$
where, for $v\in\rr^d\setminus\{0\}$,
$$
C_\delta(v):=
\left\{
w\in\rr^d\setminus\{0\}:
\left\langle\frac{v}{|v|},\frac{w}{|w|}\right\rangle>1-\delta
\right\}.
$$
Set
$$
E_\delta^i:=C_\delta(v_i)\cap S^{d-1},
\qquad i=1,\dots,d.
$$
We require $\delta$ to be sufficiently small that every choice
$\sigma_i\in\overline{E_\delta^i}$, $i=1,\dots,d$,
yields linearly independent vectors. By weak$^*$ convergence,
$$
\begin{aligned}
\mu(\overline{E_\delta^i})
&\geq \limsup_{\e\to0}\mu_\e(\overline{E_\delta^i})\\
&\geq \limsup_{\e\to0}\mu_\e(E_\delta^i)
=\limsup_{\e\to0}
\int_{C_\delta(v_i)}\rho_\e(z)\,dz>0.
\end{aligned}
$$
Thus, $\operatorname{supp}\mu$ intersects each
$\overline{E_\delta^i}$ and therefore contains a basis of $\rr^d$.
Consequently, the spanning condition holds,
and \eqref{matrix_coercivity} follows.
\end{remark}
\begin{remark}
    The previous theorem also includes the case ${\rho}_\e(z)=\displaystyle \frac{1}{\e^d}{\rho}\left(\frac{z}{\e}\right)$, 
    where $\rho\in L^1(\rr^d;[0,+\infty))$ and
$$
\int_{\rr^d}\rho(z)\,dz=1.
$$
Then
    \begin{equation*}
        \varphi(\nu)=\int_{\rr^d}\frac{\rho(z)}{|z|}|\langle \nu,z\rangle|\, dz,
    \end{equation*}
    
\end{remark}

\begin{proof}[Proof of Theorem \ref{Thm_gamma_limite_esplicito}]
\leavevmode\\[1mm]
\textbf{$\Gamma$-liminf inequality.}
Let $u_\varepsilon\in L^1(A;Y)$ be such that
$u_\varepsilon\to u
\ \text{in }L^1(A;\mathbb R^m)$ and
\begin{equation*}
\liminf_{\e \to 0} G_\varepsilon(u_\varepsilon,A)<+\infty.
\end{equation*}
We have to prove that $u\in BV(A;Y)$ and
\begin{equation}\label{liminfineq}
\liminf_{\varepsilon\to0} G_\varepsilon(u_\varepsilon,A)
\geq
\int_{S_u\cap A}
|u^+-u^-|
\int_{S^{d-1}}
|\langle \nu_u,\sigma\rangle|
\,d\mu(\sigma)
\,d\mathcal H^{d-1}.
\end{equation}
Let $\vartheta\in C_c^\infty(\mathbb R^d)$ be a nonnegative radial function 
such that
\begin{equation*}
\int_{\mathbb R^d}\vartheta(x)\,dx=1,
\qquad
\operatorname{supp}\vartheta\subset B_1,
\end{equation*}
and set
\begin{equation}
\label{def_mollifiers}
\vartheta_\delta(x):=\frac{1}{\delta^d}\vartheta\left(\frac{x}{\delta}\right),
\end{equation}
for $\delta>0$. We define
\begin{equation*}
A_\delta:=
\{x\in A:\operatorname{dist}(x,\partial A)>\delta\}
\end{equation*}
and
\begin{equation*}
    u_{\varepsilon,\delta}(x):=\vartheta_\delta\ast u_\e (x)=
\int_A \vartheta_\delta(x-y)u_\varepsilon(y)\,dy,
\qquad x\in A_\delta.
\end{equation*}
For simplicity, let us identify $G_\e(\cdot,A)$ with its extension to the whole $L^1(A;\rr^m)$. By applying Jensen's inequality and a change of variable, the following inequality holds
\begin{equation*}
    G_\varepsilon(u_\varepsilon,A)
    \geq
 G_\varepsilon(u_{\varepsilon,\delta},A_\delta),
\end{equation*}
(see \cite[Lemma 4]{P2}). Let $A'\Subset A$. For $\delta>0$ sufficiently small, we have
$A'\Subset   A_\delta$. Then
\begin{equation*}
G_\varepsilon(u_\varepsilon,A)
\geq G_\varepsilon(u_{\varepsilon,\delta},A').
\end{equation*}
Since $u_\varepsilon\to u$ in $L^1(A;\rr^m)$, we have
\begin{equation*}
u_{\varepsilon,\delta}\to u_\delta:=\vartheta_\delta\ast u
\quad\text{as }\e\to 0\quad\text{in }C^2(\overline {A'};\mathbb R^m).
\end{equation*}
Therefore, applying
\cite[Proposition 1 and Remark 7]{P2}, we get
\begin{equation}\label{limudelta}
\lim_{\varepsilon\to0}
 G_\varepsilon(u_{\varepsilon,\delta},A')
=
\int_{A'}
\int_{S^{d-1}}
|\nabla u_\delta(x)\sigma|
\,d\mu(\sigma)\,dx=\int_{A'} f_\mu(\nabla u_\delta)\, dx
\end{equation}
where $f_{\mu}$ is defined as in \eqref{fmu}.
Consequently, by \eqref{matrix_coercivity},
\begin{equation*}
    \begin{split}
\liminf_{\varepsilon\to0} G_\varepsilon(u_\varepsilon,A)
&\geq
\liminf_{\varepsilon\to0}
 G_\varepsilon(u_{\varepsilon,\delta},A')\\
&=
\int_{A'}
\int_{S^{d-1}}
|\nabla u_\delta(x)\sigma|
\,d\mu(\sigma)\,dx\\
&\geq \alpha
\int_{A'}|\nabla u_\delta(x)|\,dx.
\end{split}
\end{equation*}
Since $u_\delta\to u$ in $L^1(A';\rr^m)$, 
the lower
semicontinuity of the total variation yields 
\begin{equation*}
|Du|(A')
\leq
\liminf_{\delta\to0}\int_{A'}|\nabla u_\delta(x)|\, dx
\leq
\frac1\alpha
\liminf_{\varepsilon\to0}G_\varepsilon(u_\varepsilon,A)<+\infty,
\end{equation*}
from which we infer that $u\in BV(A; Y)$ by the arbitrariness of
$A'\Subset A$. Since $f_\mu$ is convex and positively one-homogeneous, the $L^1$-lower-semicontinuous envelope of the functional
\begin{equation*}
F_\mu(u)=\begin{cases} 
\displaystyle \int_{A'} f_\mu (\nabla u)\, dx,& u\in C^2(\overline {A'};\rr^m)\\\\
+\infty,& \text{otherwise},
    \end{cases}
\end{equation*}
is given by 
\begin{equation*}
\overline F_\mu(u)=\begin{cases}
\displaystyle \int_{A'} f_\mu\left(\frac{d Du}{d |D u|}\right)\,d |Du|=\int_{S^{d-1}}
|Du\,\sigma|(A')\,d\mu(\sigma)& u\in BV(A';\rr^m)\\\\
+\infty,& \text{otherwise},
    \end{cases}
\end{equation*}
where $D u$ denotes the distributional derivative of $u$.
Hence, by \eqref{limudelta}, passing to the limit as
$\delta\to0$, we obtain
\begin{equation}
\label{lwr_bd}
\liminf_{\varepsilon\to0} 
G_\varepsilon(u_\varepsilon,A)
\geq
\int_{S^{d-1}}
|Du\,\sigma|(A')\,d\mu(\sigma).
\end{equation}
Since $u\in BV(A;Y)$, we have
$Du=(u^+-u^-)\otimes\nu_u\,
\mathcal H^{d-1}\llcorner S_u$.
Therefore, for every $\sigma\in S^{d-1}$, {}
\begin{equation*}
|Du\,\sigma|(A')
=
\int_{S_u\cap A'}
|u^+-u^-|
|\langle\nu_u,\sigma\rangle|
\,d\mathcal H^{d-1}.
\end{equation*}
Hence, by Fubini's theorem, 
\begin{equation*}
    \int_{S^{d-1}}
|Du\,\sigma|(A')\,d\mu(\sigma)
=
\int_{S_u\cap A'}
|u^+-u^-|
\int_{S^{d-1}}
|\langle\nu_u,\sigma\rangle|
\,d\mu(\sigma)
\,d\mathcal H^{d-1}.
\end{equation*}
Combining this with \eqref{lwr_bd}, we obtain 
\begin{equation*}
    \liminf_{\e \to 0}G_\e(u_\e,A)\geq \int_{S_u\cap A'}
|u^+-u^-|
\int_{S^{d-1}}
|\langle\nu_u,\sigma\rangle|
\,d\mu(\sigma)
\,d\mathcal H^{d-1},
\end{equation*}
from which we get \eqref{liminfineq}, by the arbitrariness of $A'\Subset A$. 

\medskip
\noindent
\textbf{$\Gamma$-limsup inequality and pointwise limit.}
It suffices to prove that for every $u\in BV(A;Y)$,
\begin{equation}\label{limsupineq}
    \limsup_{\varepsilon\to0} G_\varepsilon(u,A)
\leq
\int_{S_u\cap A}
|u^+-u^-|
\int_{S^{d-1}}
|\langle \nu_u,\sigma\rangle|
\,d\mu(\sigma)
\,d\mathcal H^{d-1}.
\end{equation}
Let then $u\in BV(A;Y)$ and consider an extension $\bar u \in BV(\mathbb{R}^d;\mathbb{R}^m)$ such that  
$|D\bar u|(\partial A)=0$ (see \cite[Proposition 3.21]{AFP}). Consider the function $\Phi:\sigma \in S^{d-1}\mapsto |Du\,\sigma|(A) $. Since $u \in BV(A;Y)$,
\begin{equation}
\label{eq:Phi}
    \Phi(\sigma)=\int_{S_u\cap A}|u^+-u^-|\,|\langle\nu_u,\sigma\rangle|\,d\mathcal H^{d-1}.
\end{equation}
Note that 
$\Phi$ is continuous on $S^{d-1}$. For $q>0$ set
$$
W_q:=\{y\in\rr^d:\ \operatorname{dist}(y,A)<q\},\qquad
\omega(q):=|D\bar u|\big(\overline{W}_{2q}\setminus A\big).
$$
The function $\omega$ is nondecreasing and, since $\bigcap_{q>0}\big(\overline{W}_{2q}\setminus A\big)=\overline A\setminus A=\partial A$
and $|D\bar u|(\mathbb R^d)<+\infty$,
\begin{equation}
\label{eq:omega}
\lim_{q\to0^+}\omega(q)=|D\bar u|(\partial A)=0.
\end{equation}
By the change of variables $z=y-x$,
we have
\begin{equation*}
\begin{split}
G_\varepsilon(u,A)
&=
\int_A\int_A
\rho_\varepsilon(y-x)
\frac{|u(y)-u(x)|}{|y-x|}
\,dx\,dy \\
&=
\int_{\mathbb R^d}
\int_{A\cap (A-z)}\rho_\varepsilon(z)
\frac{|u(x+z)-u(x)|}{|z|}
\,dx\,dz.
\end{split}
\end{equation*}
For $z\neq0$ we define
\begin{equation*}
    g(z):=
\int_{A\cap (A-z)}
\frac{|u(x+z)-u(x)|}{|z|}
\,dx,
\end{equation*}
and we set $g(0)=0.$ Then, we may rewrite the energy as 
\begin{equation}
\label{energy_g_epsilon}
G_\varepsilon(u,A)
=
\int_{\mathbb R^d} \rho_\varepsilon(z)g(z)
\,dz,
\end{equation}
and we claim that 
\begin{equation}
\label{claim1_limsup}
g(z)\leq \Phi\left(\frac{z}{|z|}\right)+\omega(|z|),\ \quad \ \forall z\neq 0.
\end{equation}
Indeed, fix $z \neq 0$ and set $A_z:=A\cap (A-z)$. For every $x \in A_z$ and $t \in [0,1]$ it holds 
$$
\operatorname{dist}(x+tz,A)\leq t|z|<2|z|,
$$
that is 
\begin{equation}
\label{inclusion_sets_limsup}
    A_z+tz\subset W_{2|z|}.
\end{equation} Since $\bar u=u$ in $A$ and
$x,x+z\in A$ for every $x\in A_z$, it holds
\begin{equation}
\label{controllo_traslazioni}
    |z|\,g(z)=\int_{A_z}|\bar u(x+z)-\bar u(x)|\,dx\le|z|\, \left |D\bar u\,\frac{z}{|z|}\right |\big(\overline{W}_{2|z|}\big).
\end{equation}
Although this estimate is standard, we include a proof for completeness.
Set $\bar u_\delta:= \vartheta_\delta \ast\bar u$, where $\vartheta_\delta$ are the mollifiers introduced in \eqref{def_mollifiers}. By \eqref{inclusion_sets_limsup}, the change of variable $y=x+tz$ and Fubini's theorem it holds
\begin{equation}
\label{stima1_traslazione_mollificata}
\int_{A_z}|\bar u_\delta(x+z)-\bar u_\delta(x)|\,dx
\le|z|\int_0^1\int_{A_z}\left|\nabla\bar u_\delta(x+tz)\,\frac{z}{|z|}\right|\,dx\,dt
\le|z|\int_{W_{2|z|}}\left|\nabla\bar u_\delta(y)\,\frac{z}{|z|}\right|\,dy .
\end{equation}
Since $\nabla\bar u_\delta=\vartheta_\delta \ast D\bar u$ and $\vartheta_\delta\ge0$, we have
$$\left|\nabla\bar u_\delta(y)\,\frac{z}{|z|}\right|\le\int_{\mathbb R^d}\vartheta_\delta(y-w)\,d\left|D\bar u\,\frac{z}{|z|}\right|(w)$$ for every $y$; integrating
over $W_{2|z|}$ and using Fubini's theorem,
\begin{equation}
\label{stima2_traslazione_mollificata}
\int_{W_{2|z|}}\left|\nabla\bar u_\delta\,\frac{z}{|z|}\right|\,dy
\le\int_{\mathbb R^d}\Big(\int_{W_{2|z|}}\vartheta_\delta(y-w)\,dy\Big)\,d\left |D\bar u\,\frac{z}{|z|}\right|(w)
\le\left|D\bar u\,\frac{z}{|z|}\right|\big(W_{2|z|}+B_\delta\big).
\end{equation}
 Hence, combining \eqref{stima1_traslazione_mollificata} and \eqref{stima2_traslazione_mollificata}
$$
\int_{A_z}|\bar u_\delta(x+z)-\bar u_\delta(x)|\,dx\le|z|\,\left|D\bar u\,\frac{z}{|z|}\right|\big(W_{2|z|}+B_\delta\big).
$$
We now let $\delta\to0$. Since $\bar u\in L^1(\mathbb R^d;\mathbb R^m)$, we have $\bar u_\delta\to\bar u$ in
$L^1(\mathbb R^d;\mathbb R^m)$, and therefore the left-hand side converges to
$$\int_{A_z}|\bar u(x+z)-\bar u(x)|\,dx.$$ On the other hand, the sets $W_{2|z|}+B_\delta$ decrease to
$$\bigcap_{\delta>0}(W_{2|z|}+B_\delta)=\overline W_{2|z|}$$ and $|D\bar u|(\mathbb R^d)<+\infty$, so that
$$\left|D\bar u\,\frac{z}{|z|}\right|(W_{2|z|}+B_\delta)\to\left|D\bar u\,\frac{z}{|z|}\right|(\overline W_{2|z|}).$$
This proves 
\eqref{controllo_traslazioni}. Moreover, since $D\bar u\lfloor_A=Du$,
\begin{equation}
\label{ulteriore_controllo}
    \left |D\bar u\,\frac{z}{|z|}\right |\big(\overline{W}_{2|z|}\big)=\left |D u\,\frac{z}{|z|}\right |(A)+\left |D\bar u\,\frac{z}{|z|}\right |\big(\overline{W}_{2|z|}\setminus A\big)
\le\Phi\left(\frac{z}{|z|}\right)+\omega(|z|).
\end{equation}
Combining \eqref{controllo_traslazioni} and \eqref{ulteriore_controllo} and dividing by $|z|$, we get \eqref{claim1_limsup}. We also have 
\begin{equation}
\label{eq:g_trivial}
g(z)\leq\frac{C\,|A|}{|z|}\qquad\text{for every }z\ne0.
\end{equation}
By Remark \ref{misurafinita} there exist $C>0$ and $\varepsilon_0>0$ such that
$$\int_{\mathbb R^d}\rho_\varepsilon(z)\,dz\le C,$$ 
for every $\varepsilon<\varepsilon_0$. Fix $\eta>0$.
By \eqref{energy_g_epsilon}, \eqref{claim1_limsup}, the monotonicity of $\omega$ and \eqref{eq:g_trivial},
for $\varepsilon<\varepsilon_0$ we get
\begin{equation}
\label{penultimo_controllo}
    G_\varepsilon(u,A)\le\int_{\mathbb R^d}\rho_\varepsilon(z)\,\Phi\Big(\frac{z}{|z|}\Big)dz+C\,\omega(\eta)
+\frac{C\,|A|}{\eta}\int_{|z|>\eta}\rho_\varepsilon(z)\,dz .
\end{equation}
Passing to polar coordinates and using \eqref{mu_hatK_representation}, 
\begin{equation}
\label{stima_primo_addendo}
\int_{\mathbb R^d}\rho_\varepsilon(z)\,\Phi\Big(\frac{z}{|z|}\Big)dz
=\int_{S^{d-1}}\Phi(\sigma)\int_0^{+\infty}\rho_\varepsilon(t\sigma)t^{d-1}\,dt\,d\mathcal H^{d-1}(\sigma)
=\int_{S^{d-1}}\Phi(\sigma)\,d\mu_\varepsilon .
\end{equation}
Since $\Phi$ is continuous and $\mu_\varepsilon\overset{*}{\rightharpoonup}\mu$, combining  \eqref{eq:nuclei3}, \eqref{penultimo_controllo}, \eqref{stima_primo_addendo} we get
$$
\limsup_{\varepsilon\to0}G_\varepsilon(u,A)\le\int_{S^{d-1}}\Phi(\sigma)\,d\mu+C\,\omega(\eta).
$$
Letting $\eta\to0^+$, by  \eqref{eq:Phi}, \eqref{eq:omega}, together with Fubini's theorem,
$$
\limsup_{\varepsilon\to0}G_\varepsilon(u,A)\le\int_{S^{d-1}}\Phi(\sigma)\,d\mu
=\int_{S_u\cap A}|u^+-u^-|\int_{S^{d-1}}|\langle\nu_u,\sigma\rangle|\,d\mu(\sigma)\,d\mathcal H^{d-1},
$$
which is \eqref{limsupineq}.
\end{proof}

\noindent By Theorem \ref{Thm_gamma_limite_esplicito} and Remarks \ref{rmk:coercive} and \ref{rmk: rhocoercive}, we immediately deduce the following corollary.
\begin{corollary}\label{liminfG}
Let $G_\varepsilon(u,A)$ be defined by \eqref{definizione di G} with $\rho_\varepsilon:\rr^d\to [0,+\infty)$  satisfying \eqref{eq:nuclei1}, \eqref{eq:nuclei2} and \eqref{eq:nuclei3}. If $\displaystyle \Gamma\text{-}\liminf_{\e\to 0} G_\e(u,A)$ is finite, then $u\in BV(A;Y)$. Moreover, there exist two positive constants $\alpha$ and $\beta$ such that for every $A\in {\cal A}^{reg}(\Omega)$ and $u\in BV(A;Y)$
$$
\alpha\,{\cal H}^{d-1}(S_u\cap A)\leq \Gamma(L^1)\,\text{-}\liminf_{\e\to 0} G_\e(u,A)\leq \Gamma(L^1)\,\text{-}\limsup_{\e\to 0} G_\e(u,A)\leq \beta\, {\cal H}^{d-1}(S_u\cap A).
$$
\end{corollary}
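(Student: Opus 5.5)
The plan is to reduce everything to Theorem \ref{Thm_gamma_limite_esplicito}, applied along subsequences, and then to compare the resulting density with $\mathcal H^{d-1}$ using Remarks \ref{rmk:coercive} and \ref{rmk: rhocoercive}. Fix $A\in\mathcal A^{\reg}(\Omega)$. Since the family $(\mu_\e)$ has total mass $1$ by \eqref{eq:nuclei2}, every sequence $\e_j\to0$ has a subsequence along which $\mu_{\e_j}\stackrel{*}\rightharpoonup\mu$. By radiality \eqref{eq:nuclei1}, each $\mu_\e$ is $\frac{1}{\sigma_{d-1}}\mathcal H^{d-1}\lfloor_{S^{d-1}}$. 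Hence the limit is always $\mu=\frac{1}{\sigma_{d-1}}\mathcal H^{d-1}\lfloor_{S^{d-1}}$, and in fact the whole family converges. Its support is $S^{d-1}$, so the spanning hypothesis holds, as in Remark \ref{rmk: rhocoercive}.

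Theorem \ref{Thm_gamma_limite_esplicito} then applies to the whole family $\e\to0$, with no subsequence needed. It gives that the $\Gamma$-limit exists and equals \eqref{limitG}, with $\varphi(\nu)=\frac{1}{\sigma_{d-1}}\int_{S^{d-1}}|\langle\nu,\sigma\rangle|\,d\mathcal H^{d-1}(\sigma)$. This is a constant $c_d>0$ independent of $\nu$. Consequently the $\Gamma$-liminf and $\Gamma$-limsup coincide. In particular, if the $\Gamma$-liminf is finite, then the $\Gamma$-limit is finite, and by \eqref{limitG} this forces $u\in BV(A;Y)$.

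For the two-sided bound, let $u\in BV(A;Y)$. Since $Y$ is finite, for $u^+\neq u^-$ we have $0<\min_{a\neq b}|a-b|\le|u^+-u^-|\le\diam Y$. Combined with $\alpha_0\le\varphi\le\mu(S^{d-1})=1$ from Remark \ref{rmk:coercive}, this yields the claim with $\alpha=\alpha_0\min_{a\ne b}|a-b|$ and $\beta=\diam Y$. Both constants depend only on $d$ and $Y$, hence are uniform in $A$ and $u$.

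The only point needing care is making sure the constants do not depend on subsequences. Radiality removes this issue, since the limit measure is uniquely determined. Without radiality one would instead argue by subsequences: given a sequence realizing the $\Gamma$-liminf, extract a subsequence along which $\mu_\e$ converges, note that the $\Gamma$-liminf along a subsequence is at least the full $\Gamma$-liminf, and observe that $\mu(S^{d-1})=1$ gives a uniform upper bound. The lower bound would then require a uniform $\alpha$, which is exactly what radiality supplies.
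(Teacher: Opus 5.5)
Your proposal is correct and follows the paper's own route. Radiality together with \eqref{eq:nuclei2} makes $\mu_\e=\sigma_{d-1}^{-1}\mathcal H^{d-1}\lfloor_{S^{d-1}}$ independent of $\e$, so Theorem \ref{Thm_gamma_limite_esplicito} applies to the whole family with the constant density $\varphi\equiv 2\omega_{d-1}/\sigma_{d-1}$, and the two-sided bound follows from $0<\min_{a\neq b}|a-b|\le|u^+-u^-|\le\diam Y$, as in Remarks \ref{rmk:coercive} and \ref{rmk: rhocoercive}. One point is left implicit, in the paper as well: the $BV$ claim and the lower bound are later used for arbitrary open $A$ in Proposition \ref{prop controllo dal basso}. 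This is harmless, because the $\Gamma$-liminf half of the proof of Theorem \ref{Thm_gamma_limite_esplicito} never uses the Lipschitz regularity of $A$.
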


We conclude this section with four explicit examples that show that the limiting surface energy density in Theorem \ref{Thm_gamma_limite_esplicito}  may be either isotropic or anisotropic. The first two examples involve radial kernels, which yield isotropic
densities, while the last two show how anisotropic densities can arise.

\begin{example}[Scaled fractional perimeters]
\label{ex_frac_per}
 We first recall a notion of fractional perimeter appropriately defined in order to deal with nonlocal contributions only inside the target set $\Omega$. 
        
Let $E\subset \Omega$ be a measurable set and $0<s<1$. The fractional $s$-perimeter  of $E$ inside $\Omega$ is denoted by $P_s(E,\Omega)$ and it is defined as follows:
        $$P_s(E,\Omega):=\int_{E}\int_{E^c\cap \Omega}\frac{1}{|y-x|^{d+s}}\, dx\, dy.$$
       Note that $P_s(E,\Omega)$ is half of the fractional Sobolev space seminorm $|\chi_E|_{W^{s,1}(\Omega)},$ where $\chi_E$ denotes the characteristic function of $E$. We recall that if $E$ is a set of finite perimeter in $\Omega$, then $P_s(E,\Omega)$ is also finite, since in general $BV(\Omega)\subset W^{s,1}(\Omega)$ for any $s \in (0,1)$. 
     
     The asymptotic analysis as $s\to 1^-$ of 
    $(1-s)P_s(E,\Omega)$ has been performed  in \cite{ADM}, where the authors prove that the $\Gamma$-limit is the functional $\omega_{d-1}P(E,\Omega)$, with $P(E,\Omega)$ denoting the classical De Giorgi's perimeter. This family of scaled perimeters can be recast in our notation and the result in \cite{ADM} is covered by Theorem \ref{Thm_gamma_limite_esplicito}. Indeed,
let $s=1-\varepsilon$ and set 
   \begin{equation*}
   \begin{split}
       G_\e(u)= \e P_{1-\e}(\{u=1\},\Omega)=\frac{\e}{2}\int_{\Omega}\int_{\Omega} \frac{|u(y)-u(x)|}{|y-x|^{d+1-\varepsilon}}\, dx\, dy.
    \end{split}
   \end{equation*}
   Note that, setting $D=\textrm{diam}(\Omega)$, $G_\e(u)$ can be written as
   \begin{equation*}
   \begin{split}
       G_\e(u)=\frac\e2\int_{\Omega}\int_{\Omega} \frac{|u(y)-u(x)|}{|y-x|^{d+1-\varepsilon}}\chi_{[0,D]}(|y-x|)\, dx\, dy.
    \end{split}
   \end{equation*}
    which is of the form \eqref{definizione di G} with
   \begin{equation*}
       \rho_\e(z)=\frac\e2\frac{1}{|z|^{d-\varepsilon}}\chi_{[0,D]}(|z|).
   \end{equation*}
   Note that, for every $\eta>0$,
   \begin{equation*}
       \int_{|z|>\eta} \rho_{\varepsilon
       }(z)\, dz=\frac{\e\sigma_{d-1}}{2}\int_\eta^D t^{\e-1}\, dt=\frac{\sigma_{d-1}}{2} t^{\varepsilon}\bigg|^{D}_{\eta}=\frac{\sigma_{d-1}}{2}(D^{\varepsilon}-{\eta}^{\varepsilon})\rightarrow0,
   \end{equation*}
hence \eqref{eq:nuclei3} is satisfied. Moreover,
$$
\mu_\e(S^{d-1})=\int_{\rr^d}\rho_\e(z)\, dz=
\frac{\sigma_{d-1}}{2} t^{\varepsilon}\bigg|^{D}_{0}=\frac{\sigma_{d-1}}{2} D^\e.
$$
Hence,
$$
\mu_\e=\displaystyle\frac{D^\e}{2} {\cal H}^{d-1}\lfloor_{S^{d-1}}\stackrel{*}\rightharpoonup  \mu=\frac{1}{2} {\cal H}^{d-1}\lfloor_{S^{d-1}}
$$
and, by \eqref{density},
$$
\varphi(\nu)=\frac12\int_{S^{d-1}}|\langle\nu,\sigma\rangle|\, d{\cal H}^{d-1}(\sigma)=\omega_{d-1}.
$$
\end{example}

\begin{example}[Scaled volume of an $\e$-neighborhood of the interface]
\label{volume}

Set 
   \begin{equation*}
         G_\e(u)=\frac{1}{\e^{d+1}}\int_{\{(x,y)\in \Omega \times \Omega:\, |y-x|\leq \e\}}|u(y)-u(x)|\, dx\, dy.
   \end{equation*}
  Note that $G_\e$ is of the form \eqref{definizione di G} with
   $$
       \rho_\e(z)=\frac{|z|}{\e^{d+1}}\chi_{[0,\e]}(|z|).
   $$
In this case \eqref{eq:nuclei3} is trivially satisfied and   
$$
\mu_\e(S^{d-1})=\int_{\rr^d}\rho_\e(z)\, dz
=\frac{\sigma_{d-1}}{\e^{d+1}}\int_0^\e t^d\, dt= \frac{\sigma_{d-1}}{d+1}
$$
Hence,
$$
\mu_\e= \frac{1}{d+1}{\cal H}^{d-1}\lfloor_{S^{d-1}}= \mu
$$
and, by \eqref{density},
$$
\varphi(\nu)=\frac{1}{d+1}\int_{S^{d-1}}|\langle\nu,\sigma\rangle|\, d{\cal H}^{d-1}(\sigma)=\frac{2\omega_{d-1}}{d+1}.
$$
\end{example}

\begin{example}[Anisotropic finite-range interactions]
\label{ex_anisotropy}
   Let $d\ge 2$, $C>0$, and consider $\displaystyle \rho_\varepsilon(z)=\frac{1}{\e^d}\chi_{B_\e}(z)a\left(\frac{z}{|z|}\right)$, where $B_\e$ is the ball centered at the origin with radius $\e$ and
    $$a(\sigma):=1+C|\langle \sigma, e_1\rangle|.
    $$
    In this case
    $$
    \mu_\e=\mu=\frac{1}{d}
    a(\cdot){\cal H}^{d-1}\lfloor_{S^{d-1}},
    $$
    thus
    $$
\varphi(\nu)=\frac1d\int_{S^{d-1}}\left(1+C|\langle \sigma, e_1\rangle|\right)|\langle\sigma,\nu\rangle|\, d{\cal H}^{d-1}(\sigma).
    $$
    A straightforward computation shows that  $\varphi(e_1)\neq\varphi(e_2)$, and hence the resulting surface energy is anisotropic.
\end{example}

\begin{example}[Anisotropic fractional perimeters]\label{ex:anisotropic}
For $s\in(0,1)$, let $k_s\colon\mathbb{R}^d\setminus\{0\}\to(0,+\infty)$ be a measurable
function satisfying, for some constant $c\ge1$ independent of $s$,
\begin{align}
&k_s(-z)=k_s(z) &&\text{for every } z\neq0, \label{k_even}\\
&k_s(\lambda z)=\lambda^{-d-s}k_s(z) &&\text{for every } \lambda>0,\ z\neq0, \label{k_hom}\\
&\frac{1}{c}\,\frac{1}{|z|^{d+s}}\le k_s(z)\le \frac{c}{|z|^{d+s}} &&\text{for every } z\neq0, \label{k_bounds}
\end{align}
and assume that $k_s\to k$ pointwise in $\mathbb{R}^d\setminus\{0\}$ as $s\to1^-$, for some measurable
function $k:\mathbb R^d\setminus\{0\}\to(0,+\infty)$.
For $u=\chi_E\in L^1(\Omega;\{0,1\})$, the anisotropic $s$-perimeter of $E$ associated
with $k_s$, restricted to the interactions inside $\Omega$, is
$$
P_{k_s}(u):=\int_{E\cap\Omega}\int_{E^c\cap\Omega}k_s(y-x)\,dx\,dy
=\frac12\int_\Omega\int_\Omega k_s(y-x)\,|u(y)-u(x)|\,dx\,dy ,
$$
where the equality follows from \eqref{k_even}. These energies have been studied in
\cite{L14,F25}. In this example we show that they
fall within our framework.

Set $\varepsilon:=1-s$ and $D:=\operatorname{diam}(\Omega)$. Since $|y-x|\le D$ for every
$x,y\in\Omega$, the rescaled energies $\varepsilon P_{k_{1-\varepsilon}}$ are of the
form \eqref{rescale_energy}, i.e.\ $\varepsilon P_{k_{1-\varepsilon}}=G_\varepsilon$, with
$$
\rho_\varepsilon(z)=\frac{\varepsilon}{2}\,k_{1-\varepsilon}(z)\,|z|\,\chi_{[0,D]}(|z|),
$$
which is not radial in general. By \eqref{k_hom}, for every $\sigma\in S^{d-1}$ we have
$$
a_\varepsilon(\sigma)=\int_0^{+\infty}\rho_\varepsilon(t\sigma)\,t^{d-1}\,dt
=\frac{\varepsilon}{2}\,k_{1-\varepsilon}(\sigma)\int_0^D t^{\varepsilon-1}\,dt
=\frac{D^\varepsilon}{2}\,k_{1-\varepsilon}(\sigma),
$$
so that, by \eqref{mu_hatK_representation},
$$
\mu_\varepsilon=\frac{D^\varepsilon}{2}\,k_{1-\varepsilon}(\cdot)\,\mathcal H^{d-1}\lfloor_{S^{d-1}}.
$$
In the same way, for every $\eta\in(0,D)$, by \eqref{k_bounds} we get
$$
\int_{\{|z|>\eta\}}\rho_\varepsilon(z)\,dz
=\frac{D^\varepsilon-\eta^\varepsilon}{2}\int_{S^{d-1}}k_{1-\varepsilon}(\sigma)\,d\mathcal H^{d-1}(\sigma)
\le\frac{c\,\sigma_{d-1}}{2}\,(D^\varepsilon-\eta^\varepsilon)\to0
\quad\text{as }\varepsilon\to0,
$$
hence \eqref{eq:nuclei3} is satisfied. Moreover, by \eqref{k_bounds} we have
$1/c\le k_{1-\varepsilon}\le c$ on $S^{d-1}$ for every $\varepsilon$; since
$k_{1-\varepsilon}\to k$ pointwise and $D^\varepsilon\to1$, the Dominated Convergence
Theorem yields
$$
\mu_\varepsilon\overset{*}{\rightharpoonup}\mu=\frac12\,k\,\mathcal H^{d-1}\lfloor_{S^{d-1}}
\qquad\text{as }\varepsilon\to0 .
$$
Since $k\ge1/c$ on $S^{d-1}$,
$\operatorname{supp}\mu=S^{d-1}$ and the spanning condition of
Theorem \ref{Thm_gamma_limite_esplicito} is satisfied. 

Applying
Theorem \ref{Thm_gamma_limite_esplicito} with $A=\Omega$, and computing \eqref{density}
with the measure $\mu$ above, we conclude that
the $\Gamma$-limit of $\e P_{k_{1-\e}}$ is of the form \eqref{limitG} with
\begin{equation}
\label{anisotropic_fractional_kernels}
\varphi(\nu)=\frac12\int_{S^{d-1}}|\langle\nu,\sigma\rangle|\,k(\sigma)\,d\mathcal H^{d-1}(\sigma).
\end{equation}
Note that we recover the $\Gamma$-convergence result stated in  \cite[Theorem 1.3]{F25}. Indeed, in that theorem  the energy density of the limit energy is
given by
$$
\frac{d+1}{2}\int_{B_k}|\langle x,\nu\rangle|\,dx,$$
with $B_k:=\{x\in\mathbb R^d\setminus\{0\}:\ k(x)>1\}\cup\{0\}$. By \eqref{k_hom} the limit kernel satisfies $k(\lambda z)=\lambda^{-d-1}k(z)$, so that 
$$B_k=\{t\sigma: \sigma \in S^{d-1}, 0\leq t<k(\sigma)^{\frac{1}{d+1}}\};$$
hence, passing to polar coordinates in the previous formula, we get \eqref{anisotropic_fractional_kernels}.

Finally, for $k_s(z)=|z|^{-d-s}$ we have $k\equiv1$ on $S^{d-1}$, and we recover
Example~\ref{ex_frac_per}. 
\end{example}

\section{Proof of Theorem \ref{main thm}}
\label{Proof main thm}

The next two propositions establish that, under hypotheses (H1) and (H2), for every $A\in \mathcal{A}^{\reg}(\Omega)$, $F'(u,A)$ and $F''(u,A)$ are finite only on $BV(A;Y)$ and satisfy hypothesis $(iv)$ of Theorem \ref{representation thm}. We omit the proofs, since they are a straightforward  consequence of  \eqref{GFG}  and Corollary \ref{liminfG}.

\begin{proposition}
\label{prop controllo dal basso}
    Let $K_{\varepsilon}(x,y,u,v)$ satisfy {\rm (H1)} and let $A \in \mathcal{A}(\Omega)$. If $u\in L^{1}(\Omega;Y)$ is such that $F'(u,A)$ is finite, then $u \in BV(A;Y)$ and 
    \begin{equation*}
        F'(u,A)\geq  c_1\,\mathcal{H}^{d-1}(S_u\cap A),
    \end{equation*}
    for some positive constant $c_1$ independent of $u$ and $A$.
    
\end{proposition}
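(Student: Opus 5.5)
The plan is to compare $F_\e$ with the reference energies $G_\e$ through the lower bound in \eqref{GFG} and then apply Corollary \ref{liminfG}. Since (H1) gives $F_\e(v,A)\ge C_1 G_\e(v,A)$ for every $v\in L^1(A;Y)$, taking $\Gamma$-liminf with respect to the $L^1(A;\rr^m)$ topology on both sides yields
\begin{equation*}
F'(u,A)\ge C_1\,\Gamma(L^1)\text{-}\liminf_{\e\to0}G_\e(u,A).
\end{equation*}
This holds because for any sequence $u_\e\to u$ in $L^1(A)$ we have $\liminf F_\e(u_\e,A)\ge C_1\liminf G_\e(u_\e,A)$, and the right-hand side is at least $C_1$ times the $\Gamma$-liminf of $G_\e$. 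Hence, if $F'(u,A)<+\infty$, the $\Gamma$-liminf of $G_\e(\cdot,A)$ at $u$ is finite.

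The radial kernels $\rho_\e$ satisfy \eqref{eq:nuclei1}--\eqref{eq:nuclei3}, so Corollary \ref{liminfG} gives $u\in BV(A;Y)$ and the lower bound $\alpha\,\mathcal H^{d-1}(S_u\cap A)$ for the $\Gamma$-liminf of $G_\e$. Here $\alpha$ depends only on the limit measure $\mu$. For radial kernels with unit mass, every weak$^*$ limit point of $\mu_\e$ is $\sigma_{d-1}^{-1}\mathcal H^{d-1}\lfloor_{S^{d-1}}$, so $\mu_\e$ converges along the whole family. Combining the two facts gives the claim with $c_1=C_1\alpha$, which is independent of $u$ and $A$.

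The only subtle point is that the Corollary is stated for $A\in\mathcal A^{\reg}(\Omega)$, while here $A$ is an arbitrary open set. The liminf part of the proof of Theorem \ref{Thm_gamma_limite_esplicito} never uses regularity of $A$: it works on $A'\Subset A$ and then exhausts $A$. In that argument, finiteness of the liminf bounds $|Du|(A')$ uniformly in $A'$, which gives $u\in BV(A;Y)$. The estimate $\liminf G_\e(u_\e,A)\ge\int_{S_u\cap A'}|u^+-u^-|\varphi(\nu_u)\,d\mathcal H^{d-1}$ also holds for every $A'\Subset A$. Letting $A'\uparrow A$ and using $\varphi\ge\alpha$ together with $|u^+-u^-|\ge\min_{i\ne j}|y_i-y_j|>0$ gives the bound on any open $A$, with the constant absorbing $\min_{i\neq j}|y_i-y_j|$.
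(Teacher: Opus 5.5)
Your proposal is correct and follows the paper's own argument: the paper omits the proof as a straightforward consequence of the lower bound in \eqref{GFG} and Corollary \ref{liminfG}, with $c_1$ equal to $C_1$ times the corollary's constant. You also observe that the liminf half of the proof of Theorem \ref{Thm_gamma_limite_esplicito} uses only sets $A'\Subset A$, so it extends to arbitrary open $A$ by exhaustion. This correctly resolves the mismatch between $A\in\mathcal A(\Omega)$ in the statement and $A\in\mathcal A^{\reg}(\Omega)$ in the corollary.
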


\begin{proposition}
\label{prop controllo dall'alto}
Let $K_{\varepsilon}(x,y,u,v)$ satisfy {\rm (H2)} and let $A\in\mathcal A^{\reg}(\Omega)$. Then, for every
$u\in BV(A;Y)$, it holds
\begin{equation*}
F''(u,A)
\leq c_2\, \mathcal H^{d-1}(S_u\cap A),
\end{equation*}
for some positive constant $c_2$ independent of $u$ and $A$.
\end{proposition}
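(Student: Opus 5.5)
The plan is to compare $F_\varepsilon$ with the reference functionals $G_\varepsilon$ through \eqref{GFG} and then read off the upper bound from Theorem \ref{Thm_gamma_limite_esplicito}, via Corollary \ref{liminfG}. No recovery sequence has to be built: the constant sequence $u_\varepsilon=u$ will do.

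\textbf{Step 1.} Fix $A\in\mathcal A^{\reg}(\Omega)$ and $u\in BV(A;Y)$. Since $F''(\cdot,A)$ is the $\Gamma$-$\limsup$, testing with $u_\varepsilon\equiv u$ and using (H2) in the form of the right inequality in \eqref{GFG} gives
\begin{equation*}
F''(u,A)\le \limsup_{\varepsilon\to0}F_\varepsilon(u,A)\le C_2\limsup_{\varepsilon\to0}G_\varepsilon(u,A).
\end{equation*}

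\textbf{Step 2.} We bound $\limsup_{\varepsilon}G_\varepsilon(u,A)$. Because $\rho_\varepsilon$ satisfies \eqref{eq:nuclei1}--\eqref{eq:nuclei3}, the measures $\mu_\varepsilon$ are
\begin{equation*}
\mu_\varepsilon=\frac{1}{\sigma_{d-1}}\,\mathcal H^{d-1}\lfloor_{S^{d-1}}
\end{equation*}
for every $\varepsilon$: radiality and unit mass fix the density. Hence $\mu_\varepsilon\stackrel{*}{\rightharpoonup}\mu=\mu_\varepsilon$ along the whole family, and $\operatorname{supp}\mu=S^{d-1}$ (Remark \ref{rmk: rhocoercive}). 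So Theorem \ref{Thm_gamma_limite_esplicito} applies, and its pointwise statement \eqref{puntuale} yields
\begin{equation*}
\lim_{\varepsilon\to0}G_\varepsilon(u,A)=\int_{S_u\cap A}|u^+-u^-|\,\varphi(\nu_u)\,d\mathcal H^{d-1}.
\end{equation*}

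\textbf{Step 3.} By Remark \ref{rmk:coercive}, $\varphi\le\mu(S^{d-1})=1$. Since $Y$ is finite, $|u^+-u^-|\le \diam Y$. Combining with Step 1,
\begin{equation*}
F''(u,A)\le c_2\,\mathcal H^{d-1}(S_u\cap A),\qquad c_2:=C_2\diam Y,
\end{equation*}
and $c_2$ does not depend on $u$ or $A$. Equivalently, one can cite the upper constant $\beta$ of Corollary \ref{liminfG}, since the $\Gamma$-$\limsup$ of $G_\varepsilon$ is bounded by the pointwise limit.

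There is no serious obstacle. The only point to check is that \eqref{puntuale} may be used for the whole family $\varepsilon\to0$, and not merely along a subsequence where $\mu_\varepsilon$ converges. Radiality makes $\mu_\varepsilon$ constant in $\varepsilon$, which settles this. The hypothesis that $A$ has Lipschitz boundary is exactly what Theorem \ref{Thm_gamma_limite_esplicito} requires in its limsup construction, which extends $u$ with $|D\bar u|(\partial A)=0$.
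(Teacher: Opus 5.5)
Your argument is correct and follows the route the paper indicates. The paper omits the proof as a direct consequence of \eqref{GFG} and Corollary~\ref{liminfG}: bound $F_\varepsilon(\cdot,A)$ by $C_2G_\varepsilon(\cdot,A)$ and use the pointwise upper bound on $G_\varepsilon(u,A)$ from Theorem~\ref{Thm_gamma_limite_esplicito}, and your observation that radiality and unit mass force $\mu_\varepsilon=\sigma_{d-1}^{-1}\mathcal H^{d-1}\lfloor_{S^{d-1}}$ for every $\varepsilon$ is a clean explicit justification, left implicit in the paper, that no subsequence is needed.
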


\noindent In the following propositions, we show that $F''(\cdot,\cdot)$ satisfies
hypotheses $(i)$ and $(ii)$ of Theorem \ref{representation thm}.

\begin{proposition}
\label{Prop subadditivity}
    Let $K_{\varepsilon}(x,y,u,v)$ satisfy {\rm (H1)}-{\rm (H2)} and let $A,B,A' \in \mathcal{A}^{\reg}(\Omega)$ be given with $A'\Subset A$. Then for any $u \in BV(\Omega;Y)$
    \begin{equation*}
        F''(u,A'\cup B)\leq F''(u,A)+F''(u,B).
    \end{equation*}
\end{proposition}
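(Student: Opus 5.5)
We follow the classical fundamental-estimate strategy of \cite{DM,BD}, adapted to the nonlocal setting; here the mass concentration \eqref{eq:nuclei3} kills the long-range interactions. We may assume $F''(u,A)$ and $F''(u,B)$ finite. By Proposition \ref{prop controllo dal basso}, $u\in BV(A\cup B;Y)$. Take recovery sequences $u_\e\to u$ in $L^1(A)$ and $v_\e\to u$ in $L^1(B)$ with $\limsup F_\e(u_\e,A)=F''(u,A)$ and $\limsup F_\e(v_\e,B)=F''(u,B)$. By Remark \ref{rmk_truncated} we may work with $F^R_\e$ for any small fixed $R$, and we take $R$ much smaller than the gap between $A'$ and $\partial A$.

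Let $\delta=\dist(A',\partial A)>0$. Fix $M\in\NN$ and the layers
\[
A_i:=\{x\in A:\dist(x,A')<i\delta/(2M)\},\qquad i=0,\dots,M .
\]
For each $i$ we build a candidate $w^i_\e$ on $A'\cup B$ equal to $u_\e$ on $A_i$ and to $v_\e$ on $(A'\cup B)\setminus A_i$. Since $u_\e,v_\e$ take values in $Y$, so does $w^i_\e$; no convex combination is needed, which suits the partition setting. We also have $w^i_\e\to u$ in $L^1(A'\cup B)$. For $R<\delta/(2M)$, interactions at distance at most $R$ that cross $\partial A_i$ only involve points in the strip
\[
S_i:=\overline{A_{i+1}}\setminus A_{i-1}\cap B .
\]
This gives
\[
F^R_\e(w^i_\e,A'\cup B)\le F^R_\e(u_\e,A)+F^R_\e(v_\e,B)+C_2\int_{S_i}\int_{S_i}\rho_\e(y-x)\,\chi_{\{|y-x|\le R\}}\frac{|u_\e(x)-v_\e(y)|}{|y-x|}\,dx\,dy .
\]
The pairs with both points in $A_i$ are controlled by $F_\e(u_\e,A)$, and those with both points outside $A_i$ by $F_\e(v_\e,B)$, since such points lie in $B$.

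The main obstacle is the mixed term. We use the triangle inequality
\[
|u_\e(x)-v_\e(y)|\le |u_\e(x)-u_\e(y)|+|u_\e(y)-v_\e(y)| .
\]
The first part is bounded by $C\,G_\e(u_\e,S_i)$. Summing over $i$, the strips overlap at most three times, so
\[
\sum_i G_\e(u_\e,S_i)\le 3\,G_\e(u_\e,A)\le C\,F_\e(u_\e,A),
\]
which is uniformly bounded. The second part is the delicate one, because of the factor $1/|y-x|$. We bound it by $\int_{S_i}|u_\e-v_\e|(y)\int\rho_\e(z)|z|^{-1}\,dz\,dy$, but $\int\rho_\e/|z|$ may blow up like $1/\e$.

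To handle this we first pass to suitably modified sequences. One option is to choose the cut-off surface $\partial A_i$ by averaging over a continuum of levels $t\in(0,\delta/2)$ instead of discrete ones. Then the mixed term becomes
\[
\int_0^{\delta/2}\int\int_{\{\dist(x,A')<t\le \dist(y,A')\}}\rho_\e\,\frac{|u_\e(y)-v_\e(y)|+|u_\e(x)-u_\e(y)|}{|y-x|}\,dx\,dy\,dt .
\]
Integrating in $t$ produces a factor $|\dist(y,A')-\dist(x,A')|\le|y-x|$, which cancels the singular denominator. Hence
\[
\frac{2}{\delta}\int_0^{\delta/2}(\text{mixed})\,dt\le \frac{C}{\delta}\Big(\|u_\e-v_\e\|_{L^1(A\cap B)}+\int_A\int_A\rho_\e(y-x)\,|u_\e(x)-u_\e(y)|\,dx\,dy\Big),
\]
using $\int\rho_\e=1$. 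The last integral is at most $R\,G^R_\e(u_\e,A)\le CR$.

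So some level $t_\e$ yields
\[
F_\e(w_\e,A'\cup B)\le F_\e(u_\e,A)+F_\e(v_\e,B)+\frac{C}{\delta}\big(\|u_\e-v_\e\|_{L^1(A\cap B)}+R\big)+o(1).
\]
We then let $\e\to0$, using $u_\e-v_\e\to0$ in $L^1(A\cap B)$, and afterwards let $R\to0$, which is allowed by Remark \ref{rmk_truncated}. Since $w_\e\to u$, this gives $F''(u,A'\cup B)\le F''(u,A)+F''(u,B)$. If the averaging technicalities fail, the fallback is the discrete $M$-layer version: there the first part of the mixed term gives an $O(1/M)$ contribution by averaging, and the $L^1$ part can be handled the same way by restricting $\rho_\e$ to $|z|\le R$ and averaging.
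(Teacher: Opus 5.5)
Your main argument is correct. It uses the same core mechanism as the paper: glue $u_\e$ and $v_\e$ along a level set of $\dist(\cdot,A')$ chosen depending on $\e$, and average over the level. The bound $|\dist(y,A')-\dist(x,A')|\le|y-x|$ then cancels the singular factor $1/|y-x|$; in the paper this is the identity $\int_0^1|D^1_z\chi_{\{\varphi^k_\eta\le t\}}|\,dt=|D^1_z\varphi^k_\eta|\le 1/\eta$.

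Your implementation is noticeably leaner. The paper proceeds in several steps:
\begin{itemize}
\item it splits $A\setminus A'$ into $N_\eta$ layers;
\item in each layer it picks a level with a coarea-controlled perimeter (used only to bound crossing interactions with $|z|>\eta$) and a mixed term at most twice its average;
\item on crossing pairs it bounds $|D^1_zu_\e|+|D^1_zv_\e|$ by the full strip energies $F_\e(u_\e,S^k_\eta)+F_\e(v_\e,S^k_\eta)$ via \eqref{controllo sui moduli dei gradienti};
\item it averages over $k$, using the overlap count, to make these terms $O(\eta)$.
\end{itemize}
You instead dispose of all interactions at distance $>R$ through Remark \ref{rmk_truncated}, so no perimeter control of the cutting surface is needed. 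On crossing pairs you keep only $|u_\e(x)-u_\e(y)|+|u_\e(y)-v_\e(y)|$. A single average over $t\in(0,\delta/2)$ then turns both into $\frac{C}{\delta}\bigl(\|u_\e-v_\e\|_{L^1(A\cap B)}+R\,G_\e(u_\e,A)\bigr)$. This removes the coarea formula, the good-level sets $E^k_\eta$ and the layer counting altogether.

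When you write it out, record two facts:
\begin{itemize}
\item For $R<\delta/2$, the outside point of a crossing pair lies in $A\cap B$ and the inside one lies in $A$. This is exactly what your triangle inequality uses, so $v_\e$ need only be defined on $B$.
\item $w^t_\e\to u$ in $L^1(A'\cup B)$ uniformly in $t$.
\end{itemize}

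Your fallback, on the other hand, would not work as stated. With levels fixed independently of $\e$, a pair with $|z|$ smaller than the layer spacing crosses at most one level. Averaging over $M$ levels therefore leaves the factor $\frac1M\int_{|z|\le R}\rho_\e(z)|z|^{-1}\,dz$, possibly of order $1/\e$. It multiplies $\|u_\e-v_\e\|_{L^1}$, on whose rate you have no information. An $\e$-dependent choice of the level from a continuum is indispensable. Since your main argument already makes that choice, the fallback is unnecessary.
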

    \begin{proof}
         Without loss of generality we may assume $F''(u,A)$ and $F''(u,B)$ are finite. Let $u_{\varepsilon},v_{\varepsilon}$ be such that $u_{\varepsilon}\rightarrow u$, $v_{\varepsilon}\rightarrow u$ in $L^{1}(\Omega;\rr^m)$ and
        \begin{equation*}
            \limsup_{\varepsilon\rightarrow 0}F_{\varepsilon}(u_{\varepsilon},A)=F''(u,A),\quad  \limsup_{\varepsilon\rightarrow 0}F_{\varepsilon}(v_{\varepsilon},B)=F''(u,B).
        \end{equation*}
         Recall that the distance function $\operatorname{dist}(x,A')$ is 1-Lipschitz. For $t \in [0,\bar{d}]$, we define 
        \begin{equation*}
            A^t:=\{x \in A: \dist(x,A')\leq t\},
        \end{equation*}
      and the family of functions 
        \begin{equation*}
            \hat w_{\e,t}=\chi_{
            {A}^{t}}u_\e+(1-\chi_{
            {A}^{t}})v_\e.
            \end{equation*}
            Note that
            \begin{equation}
        \label{controllo sui moduli dei gradienti}
        \begin{split}
            |D^1_z \hat w_{\e,t}(x)|\leq|D^1_{z}u_\e(x)|+|D^1_{z}v_\e(x)|+|D^1_{z}\chi_{A^t}(x)||u_\e(x)-v_\e(x)|,
        \end{split}
        \end{equation}
        where for a generic function $w:\Omega\to\rr^m$ and $z\in\rr^d\setminus\{0\}$, we adopt the notation
        $$
        D^1_z  w (x):=\frac{w(x+z)-w(x)}{|z|}.
        $$
        Let
         $\bar{d}:=\operatorname{dist}( {A'},A^{c})$,
         $\eta\in (0, \bar{d})$  and set 
        $   \displaystyle N_{\eta}:=\left\lfloor\frac{\bar{d}}{\eta}\right\rfloor.$
     Given $\varphi:\rr\rightarrow [0,1]$ defined by
        \begin{equation*}
            \varphi(t):=
            \begin{cases}
                0 &\text{if}\ t\leq 0\\
                t &\text{if}\ 0<t<1\\
                1 &\text{if}\  t\geq 1
            \end{cases}
        \end{equation*} 
        and $k \in \{1,\dots,N_\eta-1\}$, let $\varphi_\eta^k(x)$ be the cut-off function between $A^{k\eta}$ and $A^{(k+1) \eta}$ defined as 
        \begin{equation*}
            \varphi_\eta^k(x):=\varphi\left(\frac{\operatorname{dist}(x,A')}{\eta}-k\right).
        \end{equation*} Note that 
         $\varphi_\eta^k\equiv 0$ on $A^{k\eta}$,
         $\varphi_\eta^k\equiv 1$ on $A\setminus A^{(k+1) \eta}$ and 
            \begin{equation*}
                \|\nabla \varphi_\eta^k\|_{\infty}\leq \frac{\|\nabla \operatorname{dist}(x,A')\|_{\infty}}{\eta}=\frac{1}{\eta},
            \end{equation*}
        which yields that for every $z \in \rr^d\setminus\{0\}$
        \begin{equation}
            \label{eq:diff finite}
            |D^1_z \varphi^k_\eta|\leq \|\nabla \varphi_\eta^k\|_\infty\leq \frac{1}{\eta}.
        \end{equation}
    Moreover, by the definition of $\varphi^k_\eta(x)$, for every  $ t\in (0,1)$ we have 
        \begin{equation*}
        \begin{split}
            A^{\eta(t+k)}&=\{x \in A: \dist(x,A')\leq\eta(t+k)\}
            =\{x \in A:\varphi_\eta^k(x)\leq t\},\\
            \partial A^{\eta(t+k)}&=\{x \in A: \dist(x,A')=\eta(t+k)\}
            =\{x \in A:\varphi_\eta^k(x)=t\}.
        \end{split}
        \end{equation*}
        By the Coarea formula, the fact that $|\nabla \operatorname{dist}(x, A')|=1$ and the change of variable $t=\frac{s-k\eta}{\eta}$, we get 
        \begin{equation}
        \label{eq:cf distanza}
            \begin{split}
                |A^{(k+1)\eta}\setminus A^{k\eta}|&=\int_{\{k\eta \leq \operatorname{dist}(x, A')\leq (k+1)\eta\}}|\nabla \operatorname{dist}(x,A')|\, dx\\
                &=\int_{k\eta}^{(k+1)\eta}\mathcal{H}^{d-1}(\partial A^s)\, ds\\
                &=\eta \int_0^1\mathcal{H}^{d-1}(\partial A^{\eta(t+k)})\, dt.
            \end{split}
        \end{equation}
        We set
    $g_\eta^k(t):=\mathcal{H}^{d-1}(\partial A^{\eta (t+k)})$ and 
        \begin{equation*}
            V^{k}_\eta:=\left\{t \in (0,1):g^k_\eta(t) >2\int_{0}^1g^k_\eta(s)\, ds\right\}, \quad
            E^k_\eta:=(0,1)\setminus V^k_\eta.
        \end{equation*}
        We note that 
        \begin{equation*}
            \int_{0}^1
g^k_\eta(s)\, ds\geq\int_{V^k_\eta}g^k_\eta(s)\, ds\geq |V^k_\eta|\,2\int_{0}^1g^k_\eta(s)\, ds.        \end{equation*}
        Hence, we have that $|V^k_\eta|\leq 1/2$ and $|E_\eta^k|\geq 1/2$. By \eqref{eq:cf distanza},  $\forall t\in E^k_\eta$ we get
        \begin{equation}
            \label{conseguenza cf distanza}
            \mathcal{H}^{d-1}(\partial A^{\eta(t+k)})\leq \frac{2}{\eta}|A^{(k+1)\eta}\setminus A^{k\eta}|.
        \end{equation}
        We define
$$
C^{k}_\eta:=\left(A^{(k+1)\eta }\setminus A^{k\eta }\right) \cap B
\qquad\text{and}\qquad
S_\eta^k:=(C_\eta^k+B_{\eta }) \cap B=\{x \in B: \dist(x, C^k_\eta)<\eta\},
$$
where $B_\eta$ is the open ball centered at the origin with radius $\eta>0$.  Note that 
        \begin{equation*}
\int_{0}^{1}|D_z^1\chi_{\{\varphi_\eta^k\leq t\}}(x)|\, dt=|D^1_z \varphi_\eta^k(x)|.
        \end{equation*}
Hence we have
\begin{equation}\label{defh}
\int_{ |z|\leq \eta} \rho_\e(z)\int_{S_\eta^k}
|D^1_{z}\varphi^{k}_\eta(x)||u_\e(x)-v_\e(x)|\, dx\, d{z}
=
\int_0^1 h^k_{\e,\eta}(t)\, dt,
\end{equation}
where
$$
h_{\e,\eta}^k(t):=\int_{|z|\leq \eta}{ \rho_\e(z)}\int_{S_\eta^k}
|D^1_{z}\chi_{\{\varphi^k_\eta \leq t\}}(x)||u_\e(x)-v_\e(x)|\, dx\, dz.
$$
Since $h_{\e,\eta}^k\geq 0$ and $|E_\eta^k|\geq 1/2$, there exists
$t_{\e,\eta}^k \in E_\eta^k$ such that
\begin{equation}\label{stimah}
h_{\e,\eta}^k(t_{\e,\eta}^k)
\leq
\frac{1}{|E_\eta^k|}
\int_{E_\eta^k} h_{\e,\eta}^k(t)\, dt
\leq
2\int_0^1 h_{\e,\eta}^k(t)\, dt.
\end{equation}
Setting
$$
{\overline A}^k_{\e,\eta}:=A^{\eta (t^{k}_{\e,\eta}+k)}=\{ x \in A: \varphi^k_{\eta}(x)\leq t^k_{\e,\eta}\},
$$
by \eqref{defh} and \eqref{stimah} we obtain
\begin{equation}
\label{eq:stima 2}
\begin{split}
\int_{|z|\leq \eta}&{\rho_{\e}(z)}\int_{S_\eta^k}
|D^1_{z}\chi_{\overline{A}^k_{\e,\eta}}(x)||u_\e(x)-v_\e(x)|\, dx\, d{ z}
\leq\\
&\leq 
2 \int_{|z|\leq \eta}\rho_{\e}(z)\int_{S_\eta^k}
|D^1_{z}\varphi^{k}_\eta(x)||u_\e(x)-v_\e(x)|\, dx\, dz.
\end{split}
\end{equation}
Moreover, since $t_{\e,\eta}^k \in E_\eta^k$, by \eqref{conseguenza cf distanza} we also have
\begin{equation}
\label{eq:stima 1}
\mathcal{H}^{d-1}(\partial \overline{A}^k_{\e,\eta})
\leq
\frac{2}{\eta}|A^{(k+1)\eta}\setminus A^{k\eta }|.
\end{equation}
        Finally, we define the family of functions $w^{k}_{\e,\eta}$ as 
        \begin{equation*}
            w^{k}_{\e,\eta}:=\hat w_{\e,\eta(t_{\e,\eta}^k+k)}.
        \end{equation*}
        Not that for every fixed $\eta>0$  $$w^k_{\e,\eta} \rightarrow u \ \quad\text{as}\ \e\to 0\quad  \text{in} \ L^{1}(\Omega;\rr^m)$$  
        uniformly with respect to the choice of $k$. 
        
        We define the set of interactions that cross $\partial\overline A^k_{\varepsilon,\eta}$ as follows
$$
S^{k}_{z,\eta}:=\left\{x \in (A'\cup B)\cap ((A'\cup B)-z): \chi_{\overline A^k_{\varepsilon,\eta}}(x)\neq \chi_{\overline A^k_{\varepsilon,\eta}}(x+z)\right\}.
$$
On the complement of $S^{k}_{z,\eta}$ the function $w^k_{\varepsilon,\eta}$ coincides either with
$u_\varepsilon$ or with $v_\varepsilon$. Since $A'\subset \overline{A}^k_{\e,\eta}\subset A$, the corresponding
energetic contribution is bounded by
$F_\varepsilon(u_\varepsilon,A)+F_\varepsilon(v_\varepsilon,B)$. Hence,
     \begin{equation}
        \label{eq:controllo energia}
            \begin{split}
            F_{\e}(w^{k}_{\e,\eta},A'\cup B)&\leq F_{\e}(u_\e,A)+F_{\e}(v_{\e},B)\\
            &+\int_{\rr^{d}}\int_{S^{k}_{z,\eta}}K_{\e}(x,x+z,w^{k}_{\e,\eta}(x),w^{k}_{\e,\eta}({ x+z}))|D^1_z w^k_{\e,\eta}(x)|\, dx\, dz.
            \end{split}
        \end{equation}

It is straightforward to check that, for every $z\in \rr^d$ such that $|z|<\eta$,
\begin{equation}
    \label{set_inclusion_sr}
    S^{k}_{z,\eta}\subset S^k_{\eta}\cap (S^k_\eta-z).
\end{equation}

Since $\overline{A}^k_{\e,\eta}$ has finite perimeter, we have 
        \begin{equation}
        \label{eq:controllo misura ingrossato del bordo}
            |S^{k}_{z,\eta}|\leq \int_{\rr^d}|\chi_{\overline{A}^k_{\e,\eta}}(x+z)-\chi_{\overline{A}^k_{\e,\eta}}(x)|\, dx\leq |z|\mathcal{H}^{d-1}(\partial \overline{A}^k_{\e,\eta}).
        \end{equation}
        Then,
        \begin{equation*}
            \begin{split}
            \int_{\rr^{d}}\int_{S^{k}_{z,\eta}}&K_{\e}(x,x+z,w^{k}_{\e,\eta}(x),w^{k}_{\e,\eta}(x+z))|D^1_z w^k_{\e,\eta}(x)|\, dx\, dz\\
            &=\int_{|z|\leq \eta}\int_{S^{k}_{z,\eta}}K_{\e}(x,x+z,w^{k}_{\e,\eta}(x),w^{k}_{\e,\eta}(x+z))|D^1_z w^k_{\e,\eta}(x)|\, dx\, dz\\
            &\ \ +\int_{|z|>\eta}\int_{S^{k}_{z,\eta}}K_{\e}(x,x+z,w^{k}_{\e,\eta}(x),w^{k}_{\e,\eta}(x+z))|D^1_z w^k_{\e,\eta}(x)|\, dx\, dz\\
            &=I^{|z|\leq \eta}_\e+I^{|z|> \eta}_\e
            \end{split}
        \end{equation*}
        By (H1)-(H2),  \eqref{eq:diff finite}, \eqref{eq:stima 2}, \eqref{controllo sui moduli dei gradienti}, \eqref{set_inclusion_sr} and the uniform bound  \eqref{eq:nuclei2} we estimate $I^{|z|\leq \eta}_\e$ as follows
        \begin{equation}
          \label{controllo di I_1}
        \begin{split}
            I^{|z|\leq \eta}_\e&\leq \int_{|z|\leq \eta}\int_{S^k_{\eta}\cap (S^k_\eta-z)}K_{\e}(x,x+z,w^{k}_{\e,\eta}(x),w^{k}_{\e,\eta}(x+z))|D^1_z w^k_{\e,\eta}(x)|\, dx\, dz\\
            &\leq \frac{C_2}{C_1}[F_{\e}(u_{\e},S^{k}_{\eta})+F_{\e}(v_{\e},S^{k}_{\eta})]+C_2\int_{|z|\leq \eta}\rho_{\e}(z)\int_{S^{k}_{\eta}}|D^1_{z}\chi_{\overline{A}^k_{\e,\eta}}(x)||u_\e(x)-v_\e(x)|\, dx\, dz\\
            &\leq \frac{C_2}{C_1}[F_{\e}(u_{\e},S^{k}_{\eta})+F_{\e}(v_{\e},S^{k}_{\eta})] + 2\, C_2\int_{|z|\leq \eta}\rho_{\e}(z)\int_{S^{k}_{\eta}}|D^1_{z}\varphi^{k}_{\eta}(x)||u_\e(x)-v_\e(x)|\, dx\, dz\\
            &\leq \frac{C_2}{C_1}[F_{\e}(u_{\e},S^{k}_{\eta})+F_{\e}(v_{\e},S^{k}_{\eta})] +\frac{C}{\eta}\int_{|z|\leq \eta}\rho_{\e}(z)\int_{S^{k}_{\eta}}|u_{\e}(x)-v_{\e}(x)|\, dx\, dz\\
            &\leq\frac{C_2}{C_1}[F_{\e}(u_{\e},S^{k}_{\eta})+F_{\e}(v_{\e},S^{k}_{\eta})] + \frac{C}{\eta}\int_{S^{k}_{\eta}}|u_{\e}(x)-v_{\e}(x)|\, dx.
    \end{split}
        \end{equation}
         On the other hand, by (H2), \eqref{eq:stima 1}, and \eqref{eq:controllo misura ingrossato del bordo} we get
         \begin{equation}
         \label{controllo I_2}
         \begin{split}
             I^{|z|>\eta}_\e&=\int_{|z|>\eta}\int_{S^{k}_{z,\eta}}K_{\e}(x,x+z,w^k_{\e,\eta}(x),w^k_{\e,\eta}(x+z))|D^1_z w^k_{\e,\eta}(x)|\, dx\, dz\\
             &\leq
             C\int_{|z|>\eta}\rho_\e(z)\frac{1}{|z|}\mathcal{H}^{d-1}(\partial \overline{A}^k_{\e,\eta})|z| \, dz\\
             &\leq \frac{C}{\eta }|A^{(k+1)\eta }\setminus A^{k\eta}| \int_{|z|>\eta}\rho_{\e}(z)\, dz,
            \end{split}
         \end{equation}
          where we also used the boundedness of $w_{\e,\eta}^k$.
        Now we note that
\begin{equation}
\label{inclusioni di insiemi}
    \bigcup_{k=0}^{N_{\eta}-1}\left(A^{(k+1)\eta}\setminus A^{k\eta}\right) \subset A\setminus A'
\end{equation}
and 
\begin{equation}
    \label{inclusione nell' intersezione}
    \bigcup_{k=1}^{N_\eta-1}S^{k}_{\eta}\subset A\cap B.
\end{equation}
        Since, for every $x \in A\cap B$ we have $\# \{k: x\in S^{k}_{\eta}\}\leq 3$, by \eqref{eq:controllo energia}, \eqref{controllo di I_1}, \eqref{controllo I_2}, \eqref{inclusioni di insiemi} and \eqref{inclusione nell' intersezione} we get that for $\eta$ sufficiently small
\begin{equation}\label{stimaIepseta}
\begin{aligned}
I_{\varepsilon,\eta}
&:=
\frac{1}{N_\eta-1}
\sum_{k=1}^{N_\eta-1}
F_{\varepsilon}(w^k_{\varepsilon,\eta},A'\cup B)
\\
&\leq
F_{\varepsilon}(u_{\varepsilon},A)
+
F_{\varepsilon}(v_{\varepsilon},B)
+
\frac{3}{N_\eta-1}
\Bigg[
\frac{C_2}{C_1}
\Big(
F_{\varepsilon}(u_{\varepsilon},A\cap B)
+
F_{\varepsilon}(v_{\varepsilon},A\cap B)
\Big)
\\
&\qquad\qquad
+
{\frac{C}{\eta}}
\int_{A\setminus A'}
|v_{\varepsilon}(x)-u_{\varepsilon}(x)|\,dx
+
{\frac{C}{\eta}}
|A\setminus A'|
\int_{|z|>\eta}
\rho_\varepsilon(z)\,dz\Bigg]
\\
&\hspace*{-1em}\leq
F_{\varepsilon}(u_{\varepsilon},A)
+
F_{\varepsilon}(v_{\varepsilon},B)
+
C\eta
+
{C}
\|v_{\varepsilon}-u_{\varepsilon}\|_{L^1(A\setminus A')}
+
{C}
|A\setminus A'|
\int_{|z|>\eta}
\rho_\varepsilon(z)\,dz .
\end{aligned}
\end{equation}
Indeed,
for $\eta$ sufficiently small, we have
$$
N_\eta-1\geq \frac{\overline d}{2\eta},
$$
and the energies $F_{\varepsilon}(u_{\varepsilon},A\cap B)$ and
$F_{\varepsilon}(v_{\varepsilon},A\cap B)$ are uniformly bounded.
Then, there exists
 $k^{\e}_{\eta}\in\{1,\dots,N_{\eta}-1\}$ such that 
\begin{equation*}
    F_{\varepsilon}(w_{\e,\eta}^{k^{\e}_ \eta},A'\cup B)\leq I_{\e,\eta}.
\end{equation*}
Setting $w_{\e,\eta}:=w_{\e,\eta}^{k^{\e}_{\eta}}$, for every fixed $\eta>0$, we have $w_{\varepsilon,\eta}\to u$ in $L^1$ as $\varepsilon\to0$. Then, passing to the limit in \eqref{stimaIepseta} and using the
decay property \eqref{eq:nuclei3}, we obtain
\begin{equation*}
    F''(u,A'\cup B)
    \leq \limsup_{\varepsilon \to 0}
    F_{\varepsilon}(w_{\varepsilon,\eta},A'\cup B)
    \leq F''(u,A)+F''(u,B)+C\eta .
\end{equation*}
Letting $\eta\to0$, the desired conclusion follows.
    \end{proof}

\noindent \textit{Proof of Theorem \ref{main thm}}. 
Taking into account Proposition \ref{prop controllo dall'alto} and \ref{Prop subadditivity}, one can infer that for any $u \in BV(\Omega;Y)$, $F''(u,\cdot)$ is inner regular and subadditive on $\mathcal{A}^{\reg}(\Omega)$ (see as a reference for this procedure \cite[Proposition 11.5 and 11.6]{BD}). By the compactness property of the $\Gamma$-convergence and arguing as in \cite[Theorem 10.3]{BD}, noting that only the inner regularity of $F''$ is needed, for every sequence $\e_j\to 0$ there exists a subsequence $\e_{j_k}\to 0$ such that, for any $(u,A) \in BV(\Omega;Y)\times \mathcal{A}^{\reg}(\Omega)$ there exists 
$$
\Gamma\text{-}\lim_{k\to \infty} F_{\e_{j_{k}}}(u,A)=:F(u,A).
$$ 
Moreover, by setting
\begin{equation}\label{estensione}
    F(u,A):=\sup\{F(u,A'): A' \in \mathcal{A}^{\reg}(\Omega), A'\Subset A\},
\end{equation}
we can extend $F(u,\cdot)$ to $\mathcal{A}(\Omega)$ and verify that all properties satisfied by $F'(u,\cdot)$ and $F''(u,\cdot)$, as stated in Propositions \ref{prop controllo dal basso}, \ref{prop controllo dall'alto} and  \ref{Prop subadditivity} still hold for $F(\cdot,\cdot)$ in \eqref{estensione}. Since in Proposition \ref{prop controllo dal basso} we proved that the domain of the $\Gamma$-limit is $BV(\Omega;Y)$, we have
\begin{equation*}
    \Gamma(L^1)\,\text{-}\lim_{k}F_{\varepsilon_{j_{k}}}(u)=+\infty
\end{equation*}
for $u \in L^{1}(\Omega,Y)\setminus BV(\Omega;Y)$. So far, it suffices to check that, for every $(u,A) \in BV(\Omega;Y)\times \mathcal{A}(\Omega)$, $F(u,A)$ satisfies all the hypotheses of Theorem \ref{representation thm}. Note that $F_{\varepsilon}(u,\cdot)$ is superadditive, and this property is preserved in the limit. Thus, we can apply Theorem \ref{De Giorgi Letta Criterion} and we get hypotheses $(i)$ in Theorem \ref{representation thm}. By Propositions \ref{prop controllo dal basso} and \ref{prop controllo dall'alto} we get $(iv)$, while $(ii)$ is trivially satisfied.
Finally, hypothesis $(iii)$
is a consequence of the lower semicontinuity of the $\Gamma$-limit.
Therefore all the assumptions of Theorem \ref{representation thm} are
satisfied, and the conclusion follows.
\qed

\vspace{5pt}

\section{Convergence of minimum problems.} 
\label{Convergence of minimum problems.}
In this section, we prove the convergence of minimum problems involving energies as  in \eqref{rescale_energy} to the corresponding minimum problem involving the limiting energy. We first consider boundary value problems and subsequently extend the analysis to minimum problems with prescribed volume fraction.

\noindent Our analysis relies on two compactness results proved in \cite{P}, for $d\geq2$ and $d=1$, respectively, which ensure the coerciveness of our family of functionals. The statements of these results are provided below and are slightly rephrased for our purposes. 

\begin{theorem}\cite[Theorem 1.2]{P}
    \label{compactness}
    Assume $d\geq 2$. Let $G_\e(u)$ be defined by \eqref{definizione di G} with $\rho_\e:\rr^d\to [0,+\infty)$ satisfying \eqref{eq:nuclei1}, \eqref{eq:nuclei2}, and \eqref{eq:nuclei3}. If $(u_{\varepsilon})\subset L^{1}(\Omega;\rr^m)$ is a bounded sequence  such that 
    \begin{equation*}
        \sup _\e G_\e(u_\e)<+\infty,
    \end{equation*}
    then $(u_\varepsilon)$ is relatively compact in $L^{1}(\Omega;\rr^m)$. Assume that $u_{\varepsilon_j} \rightarrow u$ in $L^{1}(\Omega;\rr^m)$, then $u \in BV(\Omega;\rr^m)$. 
\end{theorem}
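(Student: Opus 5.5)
The plan is to prove relative compactness in $L^1(\Omega;\rr^m)$ via the Fréchet--Kolmogorov criterion. The comparison family will be the mollified sequence $\vartheta_\delta\ast u_\e$, with $\vartheta_\delta$ as in \eqref{def_mollifiers}. Set $M:=\sup_\e G_\e(u_\e)$. The core estimate to aim for is:
\begin{equation*}
\limsup_{\e\to0}\int_{\Omega'}|\vartheta_\delta\ast u_\e-u_\e|\,dx\le C\,\delta\,M
\qquad\text{for every }\Omega'\Subset\Omega,\ \delta<\operatorname{dist}(\Omega',\partial\Omega),
\end{equation*}
with $C$ depending only on $d$. Granting this, for fixed $\delta$ the functions $\vartheta_\delta\ast u_\e$ are bounded in $C^1(\overline{\Omega'})$, because $(u_\e)$ is bounded in $L^1$. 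By Ascoli--Arzelà they form a relatively compact family in $L^1(\Omega')$. The estimate then shows that $(u_\e)$ restricted to $\Omega'$ is totally bounded in $L^1(\Omega')$.

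To pass from $\Omega'$ to $\Omega$, I would use that $\Omega$ is Lipschitz. Near $\partial\Omega$ I would flatten the boundary locally by bi-Lipschitz charts and reflect $u_\e$ across it. This should give a controlled extension satisfying $G_\e(\tilde u_\e,\widetilde\Omega)\le C\,G_\e(u_\e,\Omega)+o(1)$. Here \eqref{eq:nuclei3} and Remark~\ref{rmk_truncated} let us discard interactions at distance larger than any fixed $R$. The interior argument can then be run on the enlarged set $\widetilde\Omega\Supset\Omega$. Once compactness is known, the fact that any limit $u$ lies in $BV(\Omega;\rr^m)$ is exactly the $\Gamma$-liminf argument in the proof of Theorem~\ref{Thm_gamma_limite_esplicito}. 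Mollifying and using Jensen gives $G_\e(u_\e)\ge G_\e(u_{\e,\delta},A')$, and \eqref{matrix_coercivity} yields $|Du|(A')\le \alpha^{-1}M$; Remark~\ref{rmk: rhocoercive} guarantees $\alpha>0$ for radial kernels.

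The main step, and the real obstacle, is the core estimate, which is a translation estimate in the spirit of Bourgain--Brezis--Mironescu. Write $T_h(x):=|u_\e(x+h)-u_\e(x)|$. Since $\vartheta_\delta$ is supported in $B_\delta$, it suffices to bound $\int_{\Omega'}T_h\,dx\le C|h|M+o_\e(1)$ uniformly for $|h|\le\delta$. The energy only controls $\int_{\rr^d}\rho_\e(z)|z|^{-1}\int T_z\,dx\,dz$. This is a weighted average of $T_z/|z|$ over those radii $r=|z|$ where $\rho_\e$ carries mass, and these radii concentrate at $0$ by \eqref{eq:nuclei3}. I would use radiality to write the energy as $\int_0^\infty \hat\rho_\e(r)r^{d-1}\,\fint_{S^{d-1}}\int T_{r\sigma}\,dx\,d\mathcal H^{d-1}(\sigma)\,r^{-1}\,dr$, where $\hat\rho_\e$ is the radial profile. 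I would then exploit two facts. First, $z\mapsto\int T_z$ is subadditive: $\int T_{z_1+z_2}\le\int T_{z_1}+\int T_{z_2}$, up to shrinking the domain slightly. Second, when $d\ge2$, every $h$ with $|h|\le 2r$ can be written as $r\sigma_1+r\sigma_2$ for a whole family of pairs $(\sigma_1,\sigma_2)$ of positive measure on the sphere. Averaging over that family, one gets $\int T_h\le C\fint_{S^{d-1}}\int T_{r\sigma}$ whenever $|h|\le 2r$. Iterating (chaining $\sim|h|/r$ steps of length $r$) gives
\begin{equation*}
\frac{\int T_h}{|h|}\le C\,\fint_{S^{d-1}}\frac{\int T_{r\sigma}}{r}\,d\mathcal H^{d-1}(\sigma)\qquad\text{for every } 0<r\le|h|.
\end{equation*}
Integrating this against $\hat\rho_\e(r)r^{d-1}\,dr$ over $r\le|h|$ gives $\int T_h\le C|h|M/\int_{|z|\le|h|}\rho_\e$. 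By \eqref{eq:nuclei2}--\eqref{eq:nuclei3}, the denominator tends to $1$ for each fixed $h$, and it does so uniformly for $|h|$ bounded below. The delicate point is uniformity as $|h|\to0$. Since mollification only needs $|h|$ of order $\delta$ (I would take $\vartheta$ vanishing near the origin, or average over $|h|\in[\delta/2,\delta]$), this suffices. The main care is the bookkeeping of domain shrinkage in the chaining; I expect this to be the hardest part to write cleanly. It is also exactly where $d\ge2$ enters, since in $d=1$ the sphere $S^0$ is two points and the averaging fails.
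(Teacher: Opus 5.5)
The paper gives no proof of this statement; it simply quotes \cite{P}, so I judge your argument on its own. Several parts are sound: the overall architecture (translation estimate, then mollification plus Fr\'echet--Kolmogorov, then the $BV$ bound via the liminf argument of Theorem~\ref{Thm_gamma_limite_esplicito}), the integration of the chained inequality against $\hat\rho_\e(r)r^{d-1}\,dr$, and the choice of $\vartheta$ supported in an annulus to avoid uniformity as $|h|\to0$. There are, however, two genuine gaps.

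\textbf{The averaging step is justified incorrectly.} For $0<|h|<2r$, the pairs with $r\sigma_1+r\sigma_2=h$ are parametrized by $\sigma_1\in S^{d-1}\cap\partial B_1(h/r)$. This is a $(d-2)$-dimensional sphere (two points if $d=2$), hence $\mathcal H^{d-1}$-null in $S^{d-1}$. Averaging over it bounds $\int T_h$ only by an average over a null set of directions, not by the full spherical average.

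What $d\ge2$ actually gives is the following. Let $\sigma_r$ be the normalized surface measure on $\partial B_r$. Then the law of $r\sigma_1+r\sigma_2$, i.e.\ $\sigma_r*\sigma_r$, is absolutely continuous with density $\ge c\,r^{-d}$ on the annulus $\{r/2\le|w|\le 3r/2\}$. So you need a second layer of averaging:
\begin{itemize}
\item[(a)] First prove $\frac{1}{|B|}\int_B\int T_w\,dx\,dw\le \frac{C}{\sigma_{d-1}}\int_{S^{d-1}}\int T_{r\sigma}\,dx\,d\mathcal H^{d-1}(\sigma)$ for every ball $B$ of radius $r/4$ inside that annulus.
\item[(b)] Then, for fixed $h$, split $h=w+(h-w)$ and average $w$ over $B=B_{r/4}(h/2+te)$, where $e\perp h$ and $t=\sqrt{r^2-|h|^2/4}$. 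With this choice both $w$ and $h-w$ stay in the annulus.
\end{itemize}
With this, your inequality holds on interior subdomains. This is exactly where $d\ge2$ enters: in $d=1$ every convolution power of $\frac12(\delta_{-r}+\delta_r)$ remains atomic. That is why Theorem~\ref{compactness1} needs the spread of radii in \eqref{additional_kernel_assumption}.

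\textbf{The boundary reduction fails as stated.} Since $\rho_\e$ is an arbitrary radial profile (no monotonicity or doubling), $G_\e$ is not stable under bi-Lipschitz changes of variables: $\rho_\e(\Phi(x)-\Phi(y))$ is not comparable with $\rho_\e(x-y)$.
\begin{itemize}
\item If $\rho_\e$ lives on a thin shell $\{\e\le|z|\le\e(1+\e)\}$, the energy of the reflected function charges pairs that correspond, inside $\Omega$, to distances where $\rho_\e$ vanishes.
\item Even for an exact reflection across a hyperplane, a cross pair $(x,y)$ with $x\in\Omega$, $y\notin\Omega$ corresponds to $(x,y^*)$ with $|x-y^*|<|x-y|$.
\end{itemize}
So $G_\e(\tilde u_\e,\widetilde\Omega)\le C\,G_\e(u_\e,\Omega)+o(1)$ is not a change-of-variables fact. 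Proving it would need the spreading argument near $\partial\Omega$ with all intermediate points kept in $\Omega$, which is the real boundary difficulty.

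Something is needed here, because for $L^1$-bounded $\rr^m$-valued sequences you must show $\sup_\e\int_{\Omega\setminus\Omega'}|u_\e|\,dx\to0$ as $\Omega'\uparrow\Omega$. A workable route is to redo your chaining in each Lipschitz chart with directions restricted to the inward cone. In $d\ge2$, sums of two vectors drawn from an open cone of directions still have an absolutely continuous law, and all points stay in $\Omega$. This yields a translation estimate for inward shifts on the boundary layer. Averaging that estimate over the shift length then gives the uniform smallness.

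For the $Y$-valued sequences actually used in this paper, the issue disappears: $\int_{\Omega\setminus\Omega'}|u_\e|\le\max_i|y_i|\,|\Omega\setminus\Omega'|$, so your (corrected) interior estimate alone gives compactness.
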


If $d=1$, we need to impose an additional condition on the kernels $\rho_\e$ in order to have compactness. More precisely, given $\rho_\e \in L^1(\rr)$, we shall assume that $\rho_\e$ is defined for every $x \in \rr$ as 
\begin{equation}
\label{def_puntuale_dim_1}
    \rho_\e(x)=
    \begin{cases}
    \displaystyle
        \lim_{r\to 0}\frac{1}{2r}\int_{x-r}^{x+r}\rho_\e(y)\, dy &\text{if } x\  \text{is a Lebesgue point of} \ \rho_\e\\
        +\infty &\text{otherwise.}
    \end{cases}
\end{equation}
Given $\theta_0 \in (0,1)$ and setting 
\begin{equation*}
    \rho_{\e,\theta_0}(x):=\inf_{\theta_0\leq \theta\leq 1} \rho_\e(\theta x)\ \quad \ \forall x \in \rr,
\end{equation*} 
we will consider the following hypothesis:  there exists $\alpha_0 >0$ such that 
\begin{equation}
\label{additional_kernel_assumption}\int_{\rr}\rho_{\e,\theta_0}\geq \alpha _0>0 \ \quad \ \forall \e >0.
\end{equation}

\begin{theorem}\cite[Theorem 1.3]{P}\label{compactness1}
Assume $d=1$ and $\Omega=(a,b)$. Let $G_\e(u)$ be defined by \eqref{definizione di G} with $\rho_\e:\rr\to [0,+\infty)$ defined as in \eqref{def_puntuale_dim_1}
and satisfying \eqref{eq:nuclei1}, \eqref{eq:nuclei2}, and \eqref{eq:nuclei3}. 
Assume there exist $\theta_0 \in (0,1)$ and $\alpha_0>0$ such that \eqref{additional_kernel_assumption} is satisfied. If $(u_\e)\subset L^1((a,b);\rr^m)$ is a bounded sequence such that 
\begin{equation*}
    \sup_\e G_\e(u_\e)<+\infty,
\end{equation*}
then $(u_\e)$ is relatively compact in $L^1((a,b);\rr^m)$; moreover, if $u_{\e_j}\to u$ in $L^1((a,b);\rr^m)$, then $u\in BV((a,b);\rr^m)$. 
\end{theorem}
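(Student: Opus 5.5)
The plan is to prove the compactness through the Fr\'echet--Kolmogorov criterion. The key ingredient is a uniform estimate on translations at all scales $h$ not smaller than the scale at which $\rho_\e$ concentrates. For $u\in L^1((a,b);\rr^m)$ and $h>0$ set
$$
\tau_u(h):=\int_a^{b-h}|u(x+h)-u(x)|\,dx .
$$
In dimension one $\rho_\e$ is even, so, up to the symmetric factor,
$$
G_\e(u)=2\int_0^{b-a}\rho_\e(z)\,\frac{\tau_u(z)}{z}\,dz .
$$
The triangle inequality yields the subadditivity-type bound $\tau_u(nh)\le n\,\tau_u(h)$, valid on shrinking intervals; interior versions on $(a',b')\Subset(a,b)$ are used to absorb boundary effects. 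The goal is then a bound of the form
$$
\frac{\tau_{u}(h)}{h}\le \frac{C}{\alpha_0}\,G_\e(u)+\frac{C}{h}\,\|u\|_{L^1}\,\beta_\e(h),
$$
where $\beta_\e(h)\to0$ as $\e\to0$ for each fixed $h>0$, by \eqref{eq:nuclei3}.

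\textbf{Main step (the obstacle).} The bound above is where assumption \eqref{additional_kernel_assumption} enters, and it is the delicate point. Fix $z>0$. For every $\theta\in[\theta_0,1]$ we have $\rho_{\e,\theta_0}(z)\le\rho_\e(\theta z)$. Averaging in $\theta$ gives
$$
\rho_{\e,\theta_0}(z)\,\frac1{1-\theta_0}\int_{\theta_0}^1\frac{\tau_u(\theta z)}{\theta z}\,d\theta
\le\frac1{1-\theta_0}\int_{\theta_0}^1\rho_\e(\theta z)\frac{\tau_u(\theta z)}{\theta z}\,d\theta .
$$
Integrating in $z$ and using Fubini together with the change of variables $w=\theta z$, the right-hand side is bounded by $C(\theta_0)\,G_\e(u)$. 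The remaining task is to bound $\tau_u(h)/h$ from above by the $\theta$-average $\frac1{1-\theta_0}\int_{\theta_0}^1\frac{\tau_u(\theta z)}{\theta z}d\theta$ for every $z\le c(\theta_0)h$. To this end I would write $h=n\theta z$ with $n=\lfloor h/(\theta z)\rfloor$ plus a remainder, and apply $\tau_u(h)\le n\,\tau_u(\theta z)+\tau_u(r)$. The remainder term is the problem. I would try first to average also in $h$ over $[H,2H]$, so that, as $\theta$ varies in $[\theta_0,1]$ and $n$ is large, the points $h/n$ sweep the whole interval $[\theta_0 z,z]$ with bounded Jacobian. This should remove the remainder and give an averaged estimate
$$
\frac1H\int_H^{2H}\frac{\tau_u(h)}{h}\,dh\le C\,\frac{G_\e(u)}{\alpha_0}.
$$
The contribution of $z>c(\theta_0)H$ is handled as in Remark \ref{rmk_truncated}: there $\tau_u(z)/z\le \|u\|_{L^1}/z$, and the mass of $\rho_\e$ away from the origin vanishes by \eqref{eq:nuclei3}. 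Passing from the averaged estimate back to a pointwise one is done by subadditivity once more, since $\tau_u(h)\le\tau_u(h')+\tau_u(h-h')$.

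\textbf{Compactness.} With the translation estimate in hand, fix $h>0$ and let $\e\to0$. I obtain
$$
\limsup_{\e}\tau_{u_\e}(h)\le C h\,\sup_\e G_\e(u_\e).
$$
Let $\vartheta_h$ be the mollifiers of \eqref{def_mollifiers}. Then $\|u_\e-\vartheta_h*u_\e\|_{L^1(a',b')}\le\sup_{|s|\le h}\tau_{u_\e}(|s|)\le Ch$ for $\e$ small. Moreover, for each fixed $h$ the family $\vartheta_h*u_\e$ is bounded in $C^1([a',b'])$, since $\|u_\e\|_{L^1}$ is bounded; hence it is precompact. A diagonal argument then gives total boundedness of $(u_\e)$ in $L^1(a',b')$. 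Near the endpoints, the same estimate applied with one-sided translations gives equi-integrability, so no mass is lost and we obtain precompactness in $L^1((a,b);\rr^m)$.

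\textbf{Regularity of the limit.} If $u_{\e_j}\to u$, then $\tau_{u_{\e_j}}(h)\to\tau_u(h)$ for each fixed $h$. The limsup bound above therefore gives $\tau_u(h)\le Ch$ for all small $h$. This is the classical difference-quotient characterization of $BV$, so $u\in BV((a,b);\rr^m)$. Alternatively, this last step follows from the $\Gamma$-liminf part of Theorem \ref{Thm_gamma_limite_esplicito}, whose proof does not use $Y$-valuedness up to the bound $|Du|(A')<\infty$.
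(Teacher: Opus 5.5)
The paper gives no proof of this statement; it imports it from Ponce \cite{P}. Your argument is therefore a self-contained alternative, not a reconstruction of something in the text. Its architecture is sound:
\begin{itemize}
\item subadditivity of $\tau_u$;
\item comparison of $\tau_u(h)/h$ with the window average
\[
A(z):=\frac{1}{(1-\theta_0)z}\int_{\theta_0 z}^{z}\frac{\tau_u(s)}{s}\,ds ,\qquad 0<z\lesssim h;
\]
\item integration in $z$ of $\rho_{\e,\theta_0}(z)A(z)\le\frac{1}{1-\theta_0}\int_{\theta_0}^1\rho_\e(\theta z)\frac{\tau_u(\theta z)}{\theta z}\,d\theta$;
\item use of \eqref{eq:nuclei3}, through $\rho_{\e,\theta_0}\le\rho_\e$, to get $\int_0^{ch}\rho_{\e,\theta_0}\ge\alpha_0/4$ for $\e$ small;
\item Fr\'echet--Kolmogorov.
\end{itemize}
It also shows clearly why \eqref{additional_kernel_assumption} is needed. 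In $d=1$ a translation by $h$ splits into equal steps only of length $h/n$, whereas for $d\ge2$ a whole interval of step lengths is available. Hence kernels living at a single scale, which barely see functions periodic at that scale, must be excluded.

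However, the two places where you write ``should'' or treat a bound as immediate both need an extra ingredient, and it is the same one:
\[
\tau_u(r)\le 2\tau_u(s)+2\tau_u(s+r)\qquad\text{whenever } s+r\le\tfrac{b-a}{2}.
\]
To prove it, split $(a,b)$ at its midpoint. On the left half compare $u(x+r)$ and $u(x)$ with $u(x+r+s)$; on the right half compare them with $u(x-s)$.

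\emph{Main step.} Suppose you keep the remainder $r=h-n\theta z$ and bound it only by $\tau_u\le 2\|u\|_{L^1}$. Then averaging in $h\in[H,2H]$ leaves $\tau_u(H)\lesssim H\,G_\e(u)+\|u\|_{L^1}$, which is useless. With the inequality above the remainder is harmless:
\begin{itemize}
\item apply it to $r'=r/k$ with $k=\lceil 2r/((1-\theta_0)z)\rceil$, average over $s\in[\theta_0z,z-r']$, and use $\tau_u(r)\le k\tau_u(r')$;
\item this gives $\tau_u(r)\le\frac{24}{1-\theta_0}\,zA(z)$ for all $0<r\le z$;
\item hence, for $z\le h$ and every $s\in[\theta_0z,z]$, $\frac{\tau_u(h)}{h}\le\frac{\tau_u(s)}{s}+\frac{\tau_u(r)}{h}$;
\item averaging in $s$ yields $\tau_u(h)/h\le C(\theta_0)A(z)$ pointwise, with no averaging in $h$.
\end{itemize}
Your sweeping idea also works if you use only exact divisors. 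For $z\le c(\theta_0)H$ and $h\in[H,2H]$, there are at least $\frac{h}{2z}(\theta_0^{-1}-1)$ integers $n$ with $h/n\in[\theta_0 z,z]$. For each of them $\tau_u(h)/h\le\tau_u(h/n)/(h/n)$ exactly. Averaging over these $n$ and over $h$, with the substitution $s=h/n$, gives $\frac1H\int_H^{2H}\frac{\tau_u(h)}{h}\,dh\le\frac{8}{\theta_0}A(z)$. In this version the averaging in $h$ turns the discrete set $\{h/n\}$ into an integral; it does not remove a remainder.

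\emph{Compactness step.} The translation bound at scale $s$ holds only for $\e\le\e_0(s)$, and $\e_0(s)\to0$ as $s\to0$. Bounded energy allows $u_\e$ to oscillate at scales far below the range of $\rho_\e$ on sets of small measure. So the claim $\sup_{|s|\le h}\tau_{u_\e}(|s|)\le Ch$ is not immediate, though it is true:
\begin{itemize}
\item the smallness of $\int_{|z|>ch}\rho_\e$ that defines $\e_0(h)$ persists at all larger scales;
\item so for $\e\le\e_0(h)$ the bound holds on $[h,\frac{b-a}{2}]$;
\item then $\tau_{u_\e}(s)\le 2\tau_{u_\e}(h)+2\tau_{u_\e}(s+h)\le Ch\sup_\e G_\e(u_\e)$ for all $s\le h$.
\end{itemize}
The boundary equi-integrability follows from
\[
\int_a^{a+\delta}|u_\e|\le\frac1h\int_h^{2h}\tau_{u_\e}(t)\,dt+\frac{\delta}{h}\|u_\e\|_{L^1}
\]
with $h=\sqrt\delta$, and symmetrically at $b$. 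With these additions the proof is complete. The $BV$ conclusion for the limit is fine as you wrote it.
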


\subsection{Boundary value minimum problems}
Given $u_0 \in BV(\Omega;Y)$, 
let $A \in \mathcal{A}^{\reg}(\Omega)$ and for any $\eta>0$ set
\begin{equation*}
    A^{\eta}=\{x\in A: \dist(x,\partial A)>\eta\}
\end{equation*}
and 
\begin{equation}
    m^{\eta}_{\e}(u_{0},A):=\inf\{F_{\e}(u,A): u\in L^1(A;Y), u=u_{0}\ \text{on}\ A\setminus A^{\eta}\}.
    \label{minimum problem}
\end{equation}
We observe that $m^{\eta}_{\e}(u_0,A)$ is increasing in $\eta$. 
Moreover $m_\e^{\eta}(u_0,A)$ is uniformly bounded. Indeed,
\begin{equation*}
    m_\e^\eta(u_0,A)\leq F_\e(u_0,A)
\end{equation*}
which is uniformly bounded by  \eqref{GFG}, \eqref{puntuale} and Corollary \ref{liminfG}.

\begin{theorem}
\label{convergence of minimum problem}
    Let $K_{\e}(x,y,u,v)$ satisfy { \rm (H1)-(H2)}. If $d=1$, assume there exist $\theta_0 \in (0,1)$ and $\alpha_0>0$ such that \eqref{additional_kernel_assumption} is satisfied. Given a sequence $\e_{j} \rightarrow 0$, let $(\e_{j_k})$ and $F(u,A)$ be as in Theorem \ref{main thm}. Then it holds 
    \begin{equation*}
        \lim_{\eta \rightarrow 0}\liminf_{k\rightarrow +\infty} m^{\eta}_{\e_{j_k}}(u_{0},A)=\lim_{\eta \rightarrow 0}\limsup_{k\rightarrow +\infty} m^{\eta}_{\e_{j_k}}(u_{0},A)=m(u_{0},A),
    \end{equation*}
    where $m^{\eta}_{\e}(u_{0},A)$ is defined by \eqref{minimum problem} and 
    \begin{equation*}
        m(u_{0},A)=\inf\{F(u,A): u \in BV(A;Y), u=u_{0}\ \text{in a neighborhood of}\ \partial A\}.
    \end{equation*}
    \begin{proof}
        Let $\eta >0$ be fixed and let $u_k \in L^1(A;Y)$ such that  $u_k=u_{0}$ on $A \setminus A^{\eta}$ and
        \begin{equation}
        \label{inf property}
            F_{\e_{j_k}}(u_k,A)\leq m^{\eta}_{\e_{j_k}}(u_{0},A)+\frac{1}{k}.
      \end{equation}
      Since $m_{\e_{j_k}}^\eta(u_0,A)$ is uniformly bounded, by Theorems  \ref{compactness} and \ref{compactness1} there exists a subsequence (not relabeled) such that $u_k$ converges strongly in $L^{1}(A;\rr^m)$ to some $u \in BV(A;Y)$. In particular, $u=u_{0}$ in $A\setminus A^{\eta}$. By Theorem \ref{main thm} and \eqref{inf property} we get
      \begin{equation*}
          m(u_{0},A)\leq F(u,A)\leq \liminf_{k \rightarrow +\infty}F_{\e_{j_k}}(u_{k},A)\leq \liminf_{k \rightarrow +\infty} m^{\eta}_{\e_{j_k}}(u_{0},A),
      \end{equation*}
      and then, taking the limit as $\eta \rightarrow 0$, it follows
      \begin{equation*}
           m(u_{0},A)\leq\lim_{\eta \rightarrow 0} \liminf_{k \rightarrow +\infty} m^{\eta}_{\e_{j_k}}(u_{0},A).
      \end{equation*}
      It remains to prove the reverse inequality. To this end, let $\delta>0$ and choose  $u \in BV(A;Y)$ such that 
      \begin{equation}
      \label{inf property 2}
          F(u,A)\leq m(u_{0},A)+\delta
      \end{equation}
      and $u=u_{0}$ in a neighborhood of $\partial A$. Hence, there exists $\eta_0>0$ such that $u=u_{0}$ in $A \setminus A^{\eta_0}$. 
      Let $u_k \in L^1(A;Y)$ be such that $u_k \rightarrow u$ strongly in $L^{1}(A;\rr^m)$ and 
      \begin{equation*}
          \lim_{k\rightarrow +\infty} F_{\e_{j_k}}(u_k,A)=F(u,A).
      \end{equation*}
       We may argue as in the proof of Proposition \ref{Prop subadditivity} in order to modify the values of $u_k$ in $A \setminus A^{\eta}$ without increasing the energy too much. More precisely, given $\eta\in(0,\eta_0)$, we can select open sets $A_k \in \mathcal{A}^{\reg}(\Omega)$ such that $A^{\eta}\Subset A_{k}\Subset A^{\eta/2}$ and the functions
      \begin{equation*}
          v_{k}=\chi_{A_k}u_k+(1-\chi_{A_k})u_{0}
      \end{equation*}
      satisfy
      \begin{itemize}
          \item[(i)] $v_k=u_{0}$ on $A\setminus A^{\eta/2}$;
          \item[(ii)] $v_{k}\rightarrow u$ strongly in $L^{1}(A;\rr^m)$;
          \item[(iii)] $F_{\e_{j_k}}(v_{k},A)\leq F_{\e_{j_k}}(u_k,A)+ F_{\e_{j_k}}(u_{0}, A \setminus A^{\eta})+C\eta+C\|u_k-u_{0}\|_{L^{1}(A\setminus A^{\eta})}+o(1).$
      \end{itemize}
      By \eqref{GFG}, \eqref{puntuale} and Corollary \ref{liminfG}, we get
      \begin{equation*}
      \begin{split}
          \limsup_{k \rightarrow +\infty }m^{\eta/2}_{\e_{j_k}}(u_{0},A)&\leq \limsup_{k \rightarrow +\infty}F_{\e_{j_k}}(v_{k},A)\\
          &\leq F(u,A)+C\mathcal{H}^{d-1}(S_{u_{0}}\cap (A\setminus A^{\eta}))+C\eta.
        \end{split}
      \end{equation*}
      By \eqref{inf property 2}, we infer that
      \begin{equation*}
          \lim_{\eta \rightarrow 0}\limsup_{k \rightarrow +\infty }m^{\eta}_{\e_{j_k}}(u_{0},A)\leq F(u,A)\leq m(u_{0},A)+\delta
      \end{equation*}
      and the conclusion follows by the arbitrariness of $\delta$.
    \end{proof}
\end{theorem}

\subsection{Minimum problems with prescribed volume fraction}

Given $l=(l^1,l^2,\dots,l^N)\in (0,1)^N$ such that $\sum_{i=1}^{N}l^i=1$, we set 
$$\mathcal{V}^{l}(\Omega;Y):=\{u\in L^1(\Omega;Y):|\{u=y_i\}|=l^i|\Omega|,\   i=1,\dots, N\}$$
and define $F^{l}_\e:L^{1}(\Omega;Y)\rightarrow [0,+\infty]$ as
\begin{equation*}
    F^{l}_\e(u)=
    \begin{cases}
        F_\e(u) &\text{if}\ u \in \mathcal{V}^{l}(\Omega;Y)\\
        +\infty &\text{otherwise,}
    \end{cases}
\end{equation*}
with $F_\e$ as in \eqref{rescale_energy}. The following $\Gamma$-convergence theorem holds.
\begin{theorem}
\label{teo_volume_fratto}
    Let $K_\e(x,y,u,v)$ satisfy {\rm (H1)-(H2)}. If $d=1$ and $\Omega=(a, b)$,  assume in addition that $$K_\e(x,y,u,v)=K_\e(y-x,u,v).$$
    Let $l_\e, l\in  (0,1)^N$ such that   $\sum_{i=1}^{N}l^i_\e=\sum_{i=1}^{N}l^i=1$ and $l_\e \rightarrow l$ as $\e\to 0$. Given $\e_j\rightarrow 0,$ let $\e_{j_k}$ and $F(\cdot)$ as in Theorem \ref{main thm}. Then $F^{l_{\e_{j_k}}}_{\e_{j_k}}$ $\Gamma$-converges with respect to the $L^1(\Omega;\rr^m)$ strong topology to the functional $F^{l}:L^1(\Omega;Y)\rightarrow [0,+\infty]$ defined by
    \begin{equation*}
        F^l(u)=
        \begin{cases}
            F(u) &\text{if}\ |\{u=y_i\}|=l^i|\Omega|, \forall i=1,\dots,N\\
            +\infty &\text{otherwise.}
        \end{cases}
    \end{equation*}
\end{theorem}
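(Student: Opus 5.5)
The argument splits into a liminf inequality and a recovery sequence that satisfies the volume constraint. For the \textbf{liminf inequality}, take $u_k\to u$ in $L^1$ with $u_k\in\mathcal V^{l_{\e_{j_k}}}(\Omega;Y)$ and bounded energy. Since $|\{u_k=y_i\}|=l^i_{\e_{j_k}}|\Omega|$ and $u_k\to u$ in measure with values in the discrete set $Y$, we get $\chi_{\{u_k=y_i\}}\to\chi_{\{u=y_i\}}$ in $L^1$, hence $|\{u=y_i\}|=l^i|\Omega|$. Therefore $u\in\mathcal V^l(\Omega;Y)$, and the inequality $F^l(u)\le\liminf_k F_{\e_{j_k}}(u_k)$ follows at once from Theorem \ref{main thm}. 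If $F^l(u)=+\infty$ because the constraint fails, the same argument shows that no sequence with finite energy can converge to $u$, so there is nothing to prove.

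For the \textbf{limsup inequality}, fix $u\in BV(\Omega;Y)\cap\mathcal V^l(\Omega;Y)$ and a recovery sequence $u_k\to u$ with $F_{\e_{j_k}}(u_k)\to F(u)$ from Theorem \ref{main thm}. The volumes of $u_k$ are close to $l|\Omega|$ but not equal to $l_{\e_{j_k}}|\Omega|$, so I would correct them by a modification on small balls that costs $o(1)$ energy. Since $l^i\in(0,1)$, every phase has positive measure. I would pick Lebesgue points $x_1,\dots,x_N$ of density one for the phases $\{u=y_1\},\dots,\{u=y_N\}$ respectively, and small disjoint balls $B_r(x_i)\Subset\Omega$ in which $u$ is almost entirely equal to $y_i$. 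Let $\delta_k^i:=l^i_{\e_{j_k}}|\Omega|-|\{u_k=y_i\}|\to0$. Because $\sum_i\delta_k^i=0$, the deficits can be transferred along a fixed chain of phases: inside $B_r(x_i)$ I overwrite $u_k$ on a concentric ball $B_{s_k}(x_i)$ of volume comparable to $|\delta_k^i|$ (or to the cumulative transfer), assigning the value needed to adjust the volumes. Continuity of $s\mapsto|\{u_k=y_j\}\cap B_s(x_i)|$ allows the radii to be tuned so that all constraints hold exactly; this is an intermediate value argument applied iteratively over $i$, together with the closeness of $u_k$ to $y_i$ on $B_r(x_i)$ for large $k$. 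Call the resulting function $\tilde u_k$; since $s_k\to0$ we still have $\tilde u_k\to u$ in $L^1$.

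The \textbf{main obstacle} is the energy estimate $F_{\e_{j_k}}(\tilde u_k)\le F_{\e_{j_k}}(u_k)+o(1)$. I would write
\[
|\tilde u_k(y)-\tilde u_k(x)|\le|u_k(y)-u_k(x)|+C\big|\chi_{B_{s_k}}(y)-\chi_{B_{s_k}}(x)\big|+C\chi_{B_{s_k}}(x)\chi_{B_{s_k}}(y)|u_k(y)-u_k(x)|,
\]
so that by (H2) the extra cost is bounded by $C\,G_{\e}(\chi_{B_{s_k}})$ plus the energy of $u_k$ near the balls. Then I would use the uniform bound $G_\e(\chi_E,\rr^d)\le C\,\mathcal H^{d-1}(\partial E)$, which follows from \eqref{controllo_traslazioni} and \eqref{eq:nuclei2}, to get $G_\e(\chi_{B_{s_k}})\le Cs_k^{d-1}\to0$ when $d\ge2$. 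For $d=1$ the perimeter of an interval is $2$ and does not vanish, so this estimate alone fails; this is why the hypothesis $K_\e=K_\e(y-x,u,v)$ is added. I would instead realize the volume change by translating an interface of $u_k$ (a jump point), or by rigidly shifting a segment of $u_k$ by an amount $\sim|\delta_k|$, and use translation invariance of $K_\e$ to see that the energy changes only by boundary-type interactions of size $o(1)$, controlled via \eqref{eq:nuclei3}. Combining these estimates gives $\limsup_k F^{l_{\e_{j_k}}}_{\e_{j_k}}(\tilde u_k)\le F(u)=F^l(u)$.
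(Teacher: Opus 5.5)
Your liminf argument is correct and is the paper's. For $d\ge2$ your energy bound is also correct, and it is more economical than the paper's. Split pairs according to how many points lie in the ball $B$: pairs outside keep their integrand and pairs inside give zero. This yields $F_\e(\tilde u_k)\le F_\e(u_k)+C\,G_\e(\chi_B,\rr^d)\le F_\e(u_k)+C\,P(B)$ by \eqref{controllo_traslazioni} and \eqref{eq:nuclei2}, whatever $u_k$ does near $B$. The weak point is the exact tuning of the volumes. Overwriting $B_s(x_i)$ changes the volume of every phase that $u_k$ takes in $B_s(x_i)$, so an intermediate value argument that fixes one phase spoils the phases already fixed. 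Moreover, at the scale $s_k$ with $|B_{s_k}|\sim|\delta_k|$ you know nothing about $u_k$ in $B_{s_k}(x_i)$: $|\delta_k|$ may be of the same order as $\|u_k-u\|_{L^1}$, and closeness to $y_i$ on $B_r(x_i)$ at the fixed scale $r$ is useless there. This can be repaired. First overwrite $B_{\sigma_k}(x_i)$ by $y_i$, with $\sigma_k\to0$ so slowly that $\|u_k-u\|_{L^1}+|l_{\e_{j_k}}-l|=o(\sigma_k^d)$. This costs $O(\sigma_k^{d-1})$ and gives a volume error $o(\sigma_k^d)$, because the $x_i$ are density points. Then insert the correcting balls inside these exactly constant balls, as the paper does; this even avoids the reduction to polyhedral $S_u$. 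But some such argument has to be supplied.

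The genuine gap is the case $d=1$. Replacing $u_k$ on $(p,q)$ by $u_k(\cdot+\tau)$ creates new interactions between $u_k(x)$, $x<p$, and $u_k(y+\tau)$, $y>p$, and similarly at $q$. These include distances smaller than any fixed $\eta$, where essentially all the mass of $\rho_\e$ lies, and \eqref{eq:nuclei3} says nothing about them. They are harmless only if the function is constant near $p$ and $q$ on a scale exceeding $\tau$ plus the range of the interactions. Nothing guarantees this for an arbitrary recovery sequence, which need not even be $BV$, so ``translating a jump point of $u_k$'' is not well defined either. For the same reason the volume change $|\{u_k=y_i\}\cap(q,q+\tau)|-|\{u_k=y_i\}\cap(p,p+\tau)|$ is not $\pm\tau$. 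It carries errors of size $\|u_k-u\|_{L^1}$, again possibly comparable to $|\delta_k|$, so exact solvability of the constraints is unclear.

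The missing idea is the paper's preliminary step. Using the layer-averaging cut-off of Proposition \ref{Prop subadditivity}, replace the recovery sequence by $\tilde u_\e=\chi_{A_\e}u_\e+(1-\chi_{A_\e})u$, with $A_\e\subset S_\delta=\{\dist(\cdot,S_u)\le\delta\}$. This is still a recovery sequence and equals $u$ off $S_\delta$. Shifting $\tilde u_\e$ on $(x_k-2\delta,x_k+2\delta)$ then glues with no new interface. By translation invariance, only interactions of length at least about $\delta$ are affected, and these are $o(1)$ by \eqref{eq:nuclei3}. The volumes change exactly by $\pm\tau^k$, so the $\tau^k$ solve a linear system, which is solvable because consecutive phases along $(a,b)$ link all phases. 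The same step is what makes the volumes of the correcting balls exactly computable in the paper's $d\ge2$ argument.
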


\begin{proof}
    For the sake of notation, in what follows we drop the dependence on the sequence $j_k$. Let $u_\e \in \mathcal{V}^{l_\e}(\Omega;Y)$
    converge to $u \in BV(\Omega;Y)$ strongly in $L^1(\Omega;\rr^m)$ such that $$\liminf_{\e\rightarrow 0}F_\e(u_\e)<+\infty.$$
    Then, $u \in \mathcal{V}^l(\Omega;Y)$ and  by Theorem \ref{main thm} the $\Gamma$-liminf inequality follows; i.e.  
    \begin{equation*}
        \liminf_{\e\rightarrow 0} F^{l_\e}_\e(u_\e)\geq F^l(u).
    \end{equation*}
    We now prove the $\Gamma$-limsup inequality. Thanks to the density result stated in \cite[Theorem A.1, Remark A.2]{AG}, it is enough to consider $u \in BV(\Omega;Y)$ such that $|\{u=y_i\}|=l^i|\Omega|$ and whose jump set $S_u$ is polyhedral. Let $u_\e$ be a recovery sequence for $F(u)$. Arguing as in Proposition \ref{Prop subadditivity}, we can find $\delta>0$ and a suitable neighborhood of $S_u$, $A_\e$, such that, for $\e$ sufficiently small, $A_\e \subset S_\delta:=\{x \in \Omega: \dist(x,S_u)\leq \delta\}$ and the sequence of functions \begin{equation*}\tilde{u}_\e:=\chi_{A_\e}u_\e+(1-\chi_{A_\e})u,
    \end{equation*}
    is still a recovery sequence for $F(u)$. In particular $\tilde{u}_\e\equiv u$ in $\Omega\setminus S_\delta$. To get the conclusion, it is enough to replace $\tilde{u}_\e$ with a new sequence $\bar{u}_\e\in \mathcal{V}^{l_\e}(\Omega;Y)$ 
    such that $\bar u_\e\to u$ in $L^1(\Omega;\rr^m)$ and $$F_\e(\bar{u}_\e)\leq F_\e(\tilde{u}_\e)+o(1).$$
   For simplicity, assume $N=3$. Let us first construct $\bar u_\e$ in the case $d\geq 2$. We set $$\tilde{l}^i_\e:=\displaystyle\frac{|\{\tilde{u}_\e=y_i\}|}{|\Omega|},\quad r^i_\e:=\left(\frac{|\tilde{l}^i_\e-l^i_\e||\Omega|}{\omega_d}\right)^{1/d}.$$
 Since $\tilde u_\e\to u$ in $L^1(\Omega;\rr^m)$,
 we have 
\begin{equation}\label{raggipiccoli}
r_\e^i\to0
\quad\text{as }\e\to0\quad \forall i\in\{1,2,3\} .
\end{equation}
Moreover, since $S_u$ is polyhedral and
\begin{equation*}
|\{u=y_i\}|=l^i|\Omega|>0
\qquad\text{for every }i=1,2,3,
\end{equation*}
there exist $\bar r>0$ and $x_i\in \Omega\setminus S_\delta$, $i=1,2,3$, such that
$$
B(x_i,\bar r)\subset \{u=y_i\}\cap (\Omega\setminus S_\delta)=\{\tilde u_\e=y_i\}\cap (\Omega\setminus S_\delta).
$$
By \eqref{raggipiccoli}, we may assume that
$$
\sum_{i=1}^3r_\e^i<\bar r.
$$
 If $\tilde{l}_\e=l_\e$, we simply set $\bar u_\e=\tilde{u}_\e$. Otherwise, there are two components of $l_\e-\tilde l_\e$ that have opposite sign. Assume, for example, that $l_\e^1>\tilde l_\e^1$ and $l_\e^2<\tilde l_\e^2$.  If $l_\e^3>\tilde l_\e^3$, we set
 \begin{equation*}
        \bar u_{\e}(x)=
        \begin{cases}
            \tilde{u}_\e &\text{if}\ x\not\in \tilde B^1_\e\cup \tilde B^3_\e\\
            y_1 &\text{if}\ x \in \tilde B_\e^1,\\
            y_3 &\text{if}\ x \in \tilde B_\e^3,
        \end{cases}
    \end{equation*}
 where $\tilde B^1_\e$ and $\tilde B^3_\e$ are disjoint balls of radius $r_\e^1$ and $r_\e^3$, respectively, and $\tilde B^1_\e\cup \tilde B^3_\e\subset B(x_2,\bar r)$. If $l_\e^3<\tilde l_\e^3$, we set
 \begin{equation*}
        \bar u_{\e}(x)=
        \begin{cases}
            \tilde{u}_\e &\text{if}\ x\not\in B(x_2,r_\e^2)\cup B(x_3,r_\e^3)\\
            y_1 &\text{if}\ x \in B(x_2,r_\e^2)\cup B(x_3,r_\e^3).\\
        \end{cases}
    \end{equation*}
 The case $l_\e^3=\tilde l_\e^3$ is treated analogously by using only one correcting ball. 
In all cases, we have $\bar u_\e\in \mathcal{V}^{l_\e}(\Omega;Y)$ and $\bar u_\e\rightarrow u$ in $L^1(\Omega;\rr^m)$. Moreover, setting $J_\e:= S_{\bar u_\e}\cap (\Omega\setminus S_\delta)$ and $J^z_\e:=\{x=y+tz: y \in  J_\e,|t|\leq1\}\cap \Omega$, by (H2), \eqref{raggipiccoli} and taking into account that, by \eqref{eq:nuclei3}, interactions between points at distance of order $1$ are negligible,  we get
    \begin{equation*}
\begin{split}
        F_{\e}(\bar u_\e)&\leq F_\e(\tilde{u}_\e)+C\int_{\rr^{d}}\frac{\rho_\e(z)}{|z|}|J^z_\e|\, dz \\
        &\leq F_\e(\tilde{u}_\e)+C\mathcal{H}^{d-1}(J_\e)\int_{\rr^d} \rho_\e(z)\,dz\\
        &\le F_\e(\tilde{u}_\e)+C\sum_{i=1}^3(r_\e^i)^{d-1}        =F_\e(\tilde{u}_\e)+o(1),
        \end{split}
    \end{equation*}
    which yields the conclusion.
    The previous construction does not apply in the case $d=1$ since $\mathcal{H}^{d-1}(J_\e)$
    is not negligible as $\e\to 0$. In this case, note that the additional assumption $K_\e(x,y,u,v)=K_\e(y-x,u,v)$ yields that
    \begin{equation}\label{shifting}
       F_\e(u,A)=F_\e(u(\cdot+\tau), A-\tau),\quad \forall \tau\in\rr. 
    \end{equation}
     We then exploit this property and construct $\bar u_\e$ by suitably `locally' shifting $\tilde u_\e$. Let
    $S_u=\{x_1,\dots, x_M\}\subset (a,b).$
    We may assume that $\delta$ is sufficiently small so that $\displaystyle\min_{\substack{h,k\in\{1,\dots,M\}\\ h\neq k}}|x_k-x_h|>4\delta$ and $\displaystyle\min_{k\in\{1,\dots, M\}}|x_k-a|\wedge|x_k-b|>2\delta$. We then set
\begin{equation*}
        \bar u_{\e}(x)=
        \begin{cases}
            \tilde{u}_\e(x)&\text{if}\ x\not\in \displaystyle\bigcup_{k=1}^M (x_k-2\delta,x_k+2\delta) \\
            \tilde{u}_\e(x+\tau_\e^k) &\text{if}\ x \in (x_k-2\delta,x_k+2\delta),\ k\in\{1,\dots, M\},
        \end{cases}
    \end{equation*}
    where $\tau_\e:=(\tau_\e^1,\dots,\tau_\e^M)\to 0$ as $\e\to 0$ and it is suitably chosen so that $\bar u_\e\in \mathcal{V}^{l_\e}(\Omega;Y)$. By \eqref{shifting} and using again that, by \eqref{eq:nuclei3}, interactions between points at distance of order $1$ are negligible,  we get
    $$
    F_\e(\bar u_\e)=\sum_{k=1}^M F_\e(\bar u_\e, (x_k-2\delta,x_k+2\delta))+o(1)=F_\e(\tilde u_\e)+o(1),
    $$
    which yields the conclusion.
    \end{proof}

As a consequence of Theorem \ref{teo_volume_fratto}, the compactness results stated in Theorems \ref{compactness} and \ref{compactness1}, and the main properties of $\Gamma$-convergence, we get the following result.

\begin{theorem}
    Let the hypotheses of Theorem \ref{teo_volume_fratto} be satisfied.  If $d=1$, assume there exist $\theta_0 \in (0,1)$ and $\alpha_0>0$ such that \eqref{additional_kernel_assumption} holds true. Then, it holds
    \begin{equation*}
    \begin{split}
        \lim_{k\rightarrow +\infty}&\min\{F_{\e_{j_k}}(u): |\{u=y_i\}|=l^{i}_{\e_{j_k}}|\Omega|, \ \forall i \in\{1,\dots, N\}\}\\
        =&\min\{F(u): |\{u=y_i\}|=l^i|\Omega|, \ \forall i \in\{1,\dots, N\}\}. 
        \end{split}
    \end{equation*}
    Moreover, if $u_{\e_{j_k}}$ is a converging sequence such that $|\{u_{\e_{j_k}}=y_i\}|=l^i_{\e_{j_k}}|\Omega|$ for every $i\in\{1,\dots,N\}$ and
    \begin{equation*}
        \lim_{k\rightarrow +\infty} F_{\e_{j_k}}(u_{\e_{j_k}})=\lim_{k\rightarrow +\infty}\min\{F_{\e_{j_k}}(u): |\{u=y_i\}|=l^i_{\e_{j_k}}|\Omega|, \ \forall i \in\{1,\dots, N\}\},
    \end{equation*}
    then its limit is a minimizer for $\min\{F(u): |\{u=y_i\}|=l^i|\Omega|, \ \forall i \in\{1,\dots, N\}\}$.
\end{theorem}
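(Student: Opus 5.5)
This result follows the standard route from $\Gamma$-convergence plus equicoercivity to convergence of minima. We work along the subsequence $\varepsilon_{j_k}$ and drop the index. By Theorem \ref{teo_volume_fratto}, $F^{l_\varepsilon}_\varepsilon$ $\Gamma$-converges to $F^l$ in $L^1(\Omega;\rr^m)$. It therefore suffices to prove two things: the functionals $F^{l_\varepsilon}_\varepsilon$ are equicoercive in $L^1$, and the minima of $F^{l_\varepsilon}_\varepsilon$ are uniformly bounded. The fundamental theorem of $\Gamma$-convergence then gives both the convergence of the minimum values and the fact that limits of asymptotically minimizing sequences are minimizers of $F^l$.

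\emph{Step 1: minima exist and are bounded.} Pick any $u\in\mathcal V^l(\Omega;Y)\cap BV(\Omega;Y)$, for instance a partition of $\Omega$ by hyperplanes orthogonal to a fixed direction with prescribed volumes. Taking a recovery sequence from the $\Gamma$-limsup part of Theorem \ref{teo_volume_fratto} shows that $\limsup_\varepsilon \inf F^{l_\varepsilon}_\varepsilon\le F^l(u)<+\infty$. Hence the infima are eventually uniformly bounded. The statement writes $\min$, so we also need the infimum to be attained for each fixed $\varepsilon$. For this we take a minimizing sequence $(w_n)$ at fixed $\varepsilon$. Applying the compactness results with the constant sequence of kernels $\rho_\varepsilon$ may not be legitimate, so we instead rely on weak$^*$ compactness in $L^\infty$. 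The hard part is lower semicontinuity. Since $u$ takes values in the finite set $Y$, strong $L^1$ compactness would give the minimum by Fatou's lemma, as the volume constraint is closed under $L^1$ convergence. If strong compactness at fixed $\varepsilon$ is unavailable, we read $\min$ as $\inf$ and note that the $\Gamma$-convergence argument below uses only almost-minimizers, with error $1/k$.

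\emph{Step 2: equicoercivity.} Let $u_\varepsilon\in\mathcal V^{l_\varepsilon}(\Omega;Y)$ with $\sup_\varepsilon F_\varepsilon(u_\varepsilon)<+\infty$. By \eqref{GFG}, $\sup_\varepsilon G_\varepsilon(u_\varepsilon)\le C_1^{-1}\sup_\varepsilon F_\varepsilon(u_\varepsilon)<+\infty$, and $(u_\varepsilon)$ is bounded in $L^1$ because $Y$ is finite and $\Omega$ is bounded. For $d\ge2$, Theorem \ref{compactness} gives relative compactness in $L^1(\Omega;\rr^m)$. For $d=1$, the extra hypothesis \eqref{additional_kernel_assumption} allows us to apply Theorem \ref{compactness1}. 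Any limit $u$ takes values in $Y$ along an a.e.\ converging subsequence. It also satisfies $|\{u=y_i\}|=\lim|\{u_\varepsilon=y_i\}|=l^i|\Omega|$, since $\chi_{\{u_\varepsilon=y_i\}}\to\chi_{\{u=y_i\}}$ in $L^1$, the points of $Y$ being isolated. Hence $u\in\mathcal V^l(\Omega;Y)$, and $u\in BV(\Omega;Y)$ by Proposition \ref{prop controllo dal basso}.

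\emph{Step 3: conclusion.} Let $u_\varepsilon$ be almost-minimizers. By Steps 1 and 2, a subsequence converges to some $u\in\mathcal V^l(\Omega;Y)$, and the $\Gamma$-liminf inequality gives $F^l(u)\le\liminf_\varepsilon \inf F^{l_\varepsilon}_\varepsilon$. For any competitor $v$, the recovery sequence from Theorem \ref{teo_volume_fratto} gives $\limsup_\varepsilon\inf F^{l_\varepsilon}_\varepsilon\le F^l(v)$. Therefore $u$ minimizes $F^l$, the minimum of $F^l$ is attained, and the values converge. Since the limit value does not depend on the subsequence, the whole sequence of minima converges. The same chain of inequalities, applied to any converging sequence as in the second statement, shows that its limit is a minimizer. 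The only real obstacle is the equicoercivity in Step 2, which is supplied by Ponce's compactness results, Theorems \ref{compactness} and \ref{compactness1}. In dimension one this is exactly where \eqref{additional_kernel_assumption} is needed.
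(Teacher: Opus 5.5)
Your proposal is correct and follows the paper's route. The paper only says that the result follows from the $\Gamma$-convergence of Theorem \ref{teo_volume_fratto}, the equicoercivity given by Theorems \ref{compactness} and \ref{compactness1} via \eqref{GFG}, and the fundamental theorem of $\Gamma$-convergence, and you fill in those steps. The weak$^*$ $L^\infty$ idea in your Step 1 would not give minimizers at fixed $\varepsilon$, since weak$^*$ limits of $Y$-valued functions need not be $Y$-valued. So reading the $\varepsilon$-level $\min$ as $\inf$, as you end up doing, is the right interpretation, and it is all the paper's argument actually delivers.
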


\section{Homogenization.}
\label{Homogenization}
In this section, we study the homogenization of the functionals $F_\e$ defined in \eqref{rescale_energy} under a periodicity assumption in the space variable of the kernels $K_\e$.
Assume that $K_{\e}(x,y,u,v)=\displaystyle\frac{1}{\e^d}K\left(\frac{x}{\e},\frac{y}{\e},u,v\right)$ , where $K(\cdot, \cdot,u,v):\rr^d\times \rr^d\rightarrow [0,+\infty)$ is $1$-periodic with respect to simultaneous translations of the two
spatial variables, namely
\begin{equation}
\label{periodicita}
    K(x+h,y+ h,u,v)=K(x,y,u,v)\ \quad \ \text{for every}\ x,y\in \rr^d, \ u,v\in Y,\ \text{and}\ h \in \mathbb{Z}^d .
\end{equation}
The analog of assumptions (H1) and (H2) is given by the following set of hypotheses on the function $K$: 
there exists a radial Borel function $\rho:\rr^d\to [0,+\infty)$ such that 
\begin{equation}
    \label{condizione_rho_hom}\int_{\rr^d}\rho(z)\, dz=1
\end{equation}
and for a.e. $(x,y) \in \rr^d\times \rr^d$ and for every $(u,v)\in Y \times Y$ it holds:
\begin{itemize}
    \item[(H1')]\textit {(Coerciveness)} $\exists \,C_1>0$ such that
    $$ K(x,y,u,v)\geq C_1\rho(y-x)$$
 \item[(H2')] \textit{(Growth)} $ \exists\, C_2>0$ such that $$K(x,y,u,v)\leq C_2 \rho(y-x).$$   
\end{itemize}
Setting 
\begin{equation*}
\rho_\e(z)=\displaystyle \frac{1}{\e^d}\rho\left(\frac{z}{\e}\right),
\end{equation*}
it is straightforward to verify that if  (H1') and (H2') hold, then $K_\e$ satisfies assumptions (H1) and (H2). 

 Throughout the section we will use the change of variable $z=y-x$. For any open set $A\subset \rr^d$ and $z \in \rr^d$ we set
\begin{equation*}
    A_z=A\cap (A-z)=\{x \in A: x+z \in A\}.
\end{equation*}
Then, the localized energy in \eqref{rescale_energy} can be written as
\begin{equation}
\label{energy_hom_kernel}
    F_\e(u,A)=\frac{1}{\e^d}\int_{\rr^d}\int_{A_z}K\left(\frac{x}{\e},\frac{x+z}{\e},u(x),u(x+z)\right)\frac{|u(x+z)-u(x)|}{|z|}\, dx\, dz.
\end{equation}

\begin{remark}
\label{rmk_hyp_hom}
Set
\begin{equation*}
    \widehat{K}\left(x,z,u,v\right):= K\left(x,x+z,u,v\right )\ \quad \ \widehat{K}_\e\left(x,z,u,v\right):= K_\e\left(x,x+z,u,v\right ).
\end{equation*}
In particular,
\begin{equation*}
    \widehat{K}_\e(x,z,u,v)=\frac{1}{\e^d}\widehat{K}\left(\frac{x}{\e},\frac{z}{\e},u,v\right).
\end{equation*}
By the periodicity
assumption on $K$, the function $\widehat K$ is $1$-periodic with
respect to the first variable and then $\widehat{K}_\e$ satisfies
\begin{equation}
\label{periodicity_first_variable}
    \widehat{K}_\e(x+\e h,z,u,v)=\widehat{K}_\e(x,z,u,v),\ \quad \ \text{for every $h \in \mathbb{Z}^d$.}
\end{equation}
Moreover, by (H1') and (H2'),
\begin{equation}
   \label{cv_crescita} C_1\rho(z)\leq \widehat{K}\left(x,z,u,v\right)\leq C_2\rho(z),\ \quad \  C_1\rho_\e(z)\leq \widehat{K}_\e\left(x,z,u,v\right)\leq C_2\rho_\e(z).
\end{equation}
With this notation, \eqref{energy_hom_kernel} can be written as
\begin{equation*}  F_\e(u,A)=\int_{\rr^d}\int_{A_z}\widehat{K}_\e(x,z,u(x),u(x+z))\frac{|u(x+z)-u(x)|}{|z|}\, dx\, dz.
\end{equation*}
\end{remark}

The following theorem holds.
\begin{theorem}
\label{teo:homogenization}
Let $F_\e$ be defined by \eqref{rescale_energy} and let $K_{\e}(x,y,u,v)=\displaystyle\frac{1}{\e^d}K\left(\frac{x}{\e},\frac{y}{\e},u,v\right)$, where $K(\cdot,\cdot,u,v)$ satisfies \eqref{periodicita}. If {\rm (H1')} and {\rm (H2')} hold, then the functionals $F_\e$ $\Gamma$-converge with respect to the $L^1(\Omega;\rr^m)$ strong topology, as $\e \rightarrow 0^+,$ to the homogenized functional $F_{\hom}:L^1(\Omega;Y)\rightarrow [0,+\infty]$ defined by
\begin{equation*}
    F_{\hom}(u)=
    \begin{cases}
        \displaystyle\int_{S_u}\varphi_{\hom}(u^+,u^-,\nu_u)\, d\mathcal{H}^{d-1} &\text{if}\ u\in BV(\Omega;Y)\\
            +\infty \ &\text{otherwise}. 
    \end{cases}
\end{equation*}
where 
\begin{equation}
\label{densita hom}
\begin{split}
    \varphi_{\hom}(a,b,\nu) = \inf_{ 0<\eta<1} \limsup_{T \rightarrow +\infty} \frac{1}{T^{d-1}} 
    \inf \left\{ 
        F_1(u, TQ_\nu) : 
        \begin{array}{l}
            u \in L^1(TQ_\nu; Y), \\
            u = u_{a,b,\nu} \ \text{on} \ TQ_\nu \setminus (1-\eta)TQ_\nu 
        \end{array}
    \right\},  
\end{split}
\end{equation}
with $u_{a,b,\nu}$ defined as in \eqref{eq: def u}.
\begin{proof}
    By Theorem \ref{main thm}, given a sequence $\e_j\rightarrow 0^+$, we can  extract a subsequence $\e_{j_k}$ such that the functionals $F_{\e_{j_k}}$ $\Gamma$-converge to a functional $F$ defined as in \eqref{gamma limite} and for any $u \in BV(\Omega;Y)$ and $A \in \mathcal{A}^{\reg}(\Omega)$ 
    \begin{equation*}
        \Gamma(L^1)\,\text{-}\lim_{k} F_{\e_{j_k}}(u,A)=\int_{S_u\cap A}\varphi(x,u^+,u^-,\nu_u)\, d\mathcal{H}^{d-1}.
    \end{equation*}
     For simplicity we write $\e$ instead of $\e_{j_k}$, and all the limits as $\e\to 0$ are understood along this subsequence.
    In order to prove Theorem \ref{teo:homogenization} we first show that $\varphi$ does not depend on $x$. More precisely, for every fixed $x_0,z_0 \in \Omega,\ a,b \in Y$ and $\nu \in S^{d-1}$, we have 
    \begin{equation*}\varphi(x_0,a,b,\nu)=\varphi(z_0,a,b,\nu).
    \end{equation*}
By \eqref{densita}, to get the claim it is enough to prove that, for every $r>0$ such that $Q_\nu(x_0,r)\cup Q_{\nu}(z_0,r)\subset \Omega$, it holds
\begin{equation*}
    m(u_{x_0,a,b,\nu},Q_\nu(x_0,r))=m(u_{z_0,a,b,\nu},Q_\nu(z_0,r)).
\end{equation*}
Given $\delta>0$, let $w \in BV(Q_\nu(x_0,r); Y)$ such that $w=u_{x_0,a,b,\nu}$ in $Q_{\nu}(x_0,r)\setminus Q_\nu(x_0,(1-\eta)r)$ for some $\eta\in (0,1)$ and
\begin{equation}
    \label{quasi_minimizer}
    F(w,Q_\nu(x_0,r))\leq m(u_{x_0,a,b,\nu},Q_\nu(x_0,r))+\delta.
\end{equation}
For $x\in Q_\nu(z_0,r)$, we consider the function $\widetilde{w}(x):=w(x+x_0-z_0)$. We note that $\widetilde{w}=u_{z_0,a,b,\nu}$ in $Q_\nu(z_0,r)\setminus Q_\nu(z_0,(1-\eta)r)$. Let $u_\e$ be a recovery sequence for $F(w,Q_\nu(x_0,r))$. We define $\tau_\e \in \rr^d$ such that, for each $i=1,\dots,d$ the $i$-th component of $\tau_\e$ is given by 
\begin{equation*}  \tau^i_\e=\bigg\lfloor\frac{x^i_0-z_0^i}{\e}\bigg\rfloor \e
\end{equation*}
and, for $x\in Q_\nu(z_0,r)$, $\widetilde{u}_\e(x):=u_\e(x+\tau_\e)$. Let $r'<r$. For $\e$ sufficiently small $ Q_\nu(z_0,r')+\tau_\e\subset Q_\nu(x_0,r)$. Since $u_\e\to  w$ in $L^1(Q_\nu(x_0,r))$, it holds that $\widetilde{u}_\e\to \widetilde{w}$ in $L^{1}(Q_\nu(z_0,r'))$.
 Since $(Q_{\nu}(z_0,r'))_z+\tau_\e=(Q_{\nu}(z_0,r')+\tau_\e)_z$, by the change of variables $x'=x+\tau_\e$ and \eqref{periodicity_first_variable} we obtain
\begin{equation*}
    \begin{split}
        F_\e&(\widetilde{u}_\e,Q_\nu(z_0,r'))=\int_{\rr^d}\int_{(Q_\nu(z_0,r'))_z}\widehat{K}_\e\Bigl(x,z,\widetilde{u}_\e(x),\widetilde{u}_\e(x+z) \Bigr)\frac{|\widetilde{u}_\e(x+z) -\widetilde{u}_\e(x)|}{|z|}\, dx\, dz\\
        &=\int_{\rr^d}\int_{(Q_\nu(z_0,r')+\tau_\e)_z}\widehat{K}_\e\Bigl(x'-\tau_\e,z,u_\e(x'),u_\e(x'+z) \Bigr)\frac{|u_\e(x'+z) -u_\e(x')|}{|z|}\, dx'\, dz\\
        &= F_\e(u_\e, Q_\nu(z_0,r')+\tau_\e)\leq F_\e(u_\e, Q_\nu(x_0,r))
    \end{split}
\end{equation*}
Then,
\begin{equation*}
\begin{split}
    F(\widetilde{w},Q_{\nu}(z_0,r'))&\leq \liminf_{\e \to 0} F_\e(\widetilde{u}_\e,Q_\nu(z_0,r'))\le\lim_{\e\to 0} F_\e(u_\e,Q_\nu(x_0,r))=F(w,Q_\nu(x_0,r)).
\end{split}
\end{equation*}
Therefore, using the inner regularity 
\begin{equation}
\label{equality_limit_functional}
    F(\widetilde{w},Q_{\nu}(z_0,r))=\sup_{r'<r} F(\widetilde{w},Q_\nu(z_0,r'))\leq F(w,Q_\nu(x_0,r)).
\end{equation}
 Since $\widetilde{w}$ is an admissible function for the minimum problem $m(u_{z_0,a,b,\nu},Q_\nu(z_0,r))$, using \eqref{quasi_minimizer} and \eqref{equality_limit_functional}, the following chain of inequalities holds
\begin{equation*}
    m(u_{z_0,a,b,\nu},Q_\nu(z_0,r))\leq F(\widetilde{w},Q_{\nu}(z_0,r))\leq F(w,Q_\nu(x_0,r))\leq m(u_{x_0,a,b,\nu},Q_\nu(x_0,r))+\delta.
\end{equation*}
By the arbitrariness of $\delta$, it follows
\begin{equation*}
    m(u_{z_0,a,b,\nu},Q_\nu(z_0,r))\leq m(u_{x_0,a,b,\nu},Q_\nu(x_0,r)).
\end{equation*}
Reversing the role of $x_0$ and $z_0$ we get the equality. 
We now prove that $\varphi$ coincides with the density in \eqref{densita hom}. Let $m^{\eta}_\e(u_{a,b,\nu},A)$ be defined as in \eqref{minimum problem}. 
    Observe that, if $A=Q$ is a $d$-cube, Theorem \ref{convergence of minimum problem} is still valid if we assume that the test functions in the definitions of $m^{\eta}_\e(u_{a,b,\nu},Q)$ 
    satisfy $u=u_{a,b,\nu}$ on $Q\setminus (1-\eta)Q$.
    By Theorem \ref{convergence of minimum problem}
    and rescaling the functionals, we obtain
    \begin{equation*}
        \begin{split}
            \varphi(a,b,\nu)&=\limsup_{r\rightarrow0^{+}}\frac{1}{r^{d-1}}m(u_{a,b,\nu},r Q_{\nu})\\
            &=\limsup_{r\rightarrow0^{+}}\frac{1}{r^{d-1}}\lim_{\eta \rightarrow 0}\limsup_{k\rightarrow +\infty}m^{\eta}_{\e_{j_k}}(u_{a,b,\nu},r Q_{\nu})\\
            &=\limsup_{r\rightarrow0^{+}}\frac{1}{r^{d-1}}\inf_{ 0<\eta<1}\limsup_{k\rightarrow +\infty}\inf \left\{ 
        F_{\e_{j_k}}(u, r Q_\nu) : 
        \begin{array}{l}
            u \in L^1(r Q_\nu; Y), \\
            u = u_{a,b,\nu} \ \text{on} \ r Q_\nu \setminus (1-\eta)r Q_\nu
        \end{array}
    \right\}\\
    &=\limsup_{r\to 0^+}\inf_{0<\eta<1}\limsup_{k \rightarrow +\infty}\frac{1}{T_{k}^{d-1}}\inf \left\{ 
        F_1(u, T_k Q_\nu) : 
        \begin{array}{l}
            u \in L^1(T_k Q_\nu; Y), \\
            u = u_{a,b,\nu} \ \text{on} \ T_k Q_\nu \setminus (1-\eta)T_k Q_\nu
        \end{array}
    \right\}
        \end{split}
    \end{equation*}
    where  $T_k=\displaystyle \frac{r}{\e_{j_k}}$.
    For any fixed $T>0$ we set
    \begin{equation*}
        m(T,\eta):=\inf\{F_1(u,TQ_\nu): u \in L^1(TQ_{\nu};Y), u=u_{a,b,\nu}\ \text{on}\ T Q_\nu \setminus (1-\eta)TQ_\nu\},
    \end{equation*}
    \begin{equation*}
        m'(\eta):=\liminf_{T \rightarrow +\infty}\frac{1}{T^{d-1}}m(T,\eta), \ \quad  m''(\eta):=\limsup_{T \rightarrow +\infty}\frac{1}{T^{d-1}}m(T,\eta).
    \end{equation*}
    Since 
    \begin{equation*}
        m'(\eta)\leq \liminf_{T_k\rightarrow +\infty}\frac{1}{T_k^{d-1}}m(T_k,\eta)\leq \limsup_{T_k\rightarrow +\infty}\frac{1}{T_k^{d-1}}m(T_k,\eta)\leq m''(\eta),
    \end{equation*}
    to get the conclusion it is enough to prove that
    \begin{equation}
        \label{m'=m''}
        \inf_{0<\eta<1}m'(\eta)=\inf_{ 0<\eta<1}m''(\eta).
    \end{equation}
    The equality \eqref{m'=m''} follows once we prove that, for every $\eta\in(0,1)$,
    \begin{equation}
    \label{liminf_controlla_limsup}
        m''(\eta)\leq m'(\eta)+C\eta.
    \end{equation} 
 In what follows we consider the case $d\ge 2$. The proof can be easily adapted to the case $d=1$ and it is left to the reader. 
 
 Let $B_{\nu}:=\{\nu^{1},\dots,\nu^{d}\}$ be an orthonormal basis of $\rr^d$ such that $\nu^{d}=\nu$ and 
    \begin{equation*}
        Q_{\nu}=\bigoplus^{d}_{j=1}\left(-\frac{1}{2},\frac{1}{2}\right)\nu^{j}.
    \end{equation*}
Let $S\gg T$ and fix $\delta>0$. Let $\bar u_T \in L^1(TQ_\nu;Y)$ such that 
\begin{equation*}
    \bar u_T=u_{a,b,\nu}\ \quad \text{on}\ TQ_\nu\setminus (1-\eta)T Q_\nu
\end{equation*}
and
\begin{equation}
    \label{energy_quasi_minimizer_small_cell}
    F_1(\bar u_T,TQ_\nu)\leq m(T,\eta)+\delta T^{d-1}.
\end{equation}
We introduce the auxiliary length
\begin{equation*}
    \ell_{T,\eta}
:=
T+\frac{\eta T}{2}+2\sqrt d
\end{equation*}
and let $\Pi^{\nu}:=\{x \in \rr^d: \langle x,\nu \rangle=0\}$ be the hyperplane orthogonal to $\nu$. Consider the lattice
\begin{equation*}
    \Lambda_{T,\eta}:=\Bigl\{z_k=\ell_{T,\eta}\sum_{j=1}^{d-1}k_j\nu^j: k=(k_1,\dots,k_{d-1}) \in \mathbb{Z}^{d-1}\Bigr\}\subset \Pi^{\nu}.
\end{equation*}
For every $k \in \mathbb{Z}^{d-1}$, choose $y_k \in \mathbb{Z}^{d}$ such that 
\begin{equation}
    \label{integer_approx_center}
    |y_k-z_k|\leq \sqrt{d}.
\end{equation}
In particular, since $z_k \in \Pi^\nu$, 
\begin{equation}
    \label{normal_displacement_centers}
    |\langle y_k,\nu\rangle|=|\langle y_k-z_k,\nu \rangle|\leq \sqrt{d}.
\end{equation}
We set 
\begin{equation*}
    I_{S,T}:=\Bigl\{k \in \mathbb{Z}^{d-1}:|k_j|\ell_{T,\eta}\leq \frac{(1-\eta)S-T-2\sqrt{d}}{2}\ \text{for every} \ j=1,\dots,d-1\Bigr\}
\end{equation*}
and
\begin{equation*}
    N_{S,T}:=\# I_{S,T}.
\end{equation*}
For every $k \in I_{S,T}$, we define
\begin{equation*}
    Q_k:=y_k+TQ_\nu.
\end{equation*}

\noindent By \eqref{integer_approx_center}, \eqref{normal_displacement_centers}, and the definition of $I_{S,T}$, we have
\begin{equation}
    \label{small_cubes_contained}
    Q_k\subset (1-\eta)SQ_\nu\cap\left\{x\in \rr^d: |\langle x,\nu\rangle|\leq\frac T2+\sqrt{d} \right\}\ \quad\  \forall\ k\in I_{S,T}.
\end{equation}
Moreover, the cubes $Q_k$ are pairwise disjoint. Indeed, for every $k,h \in I_{S,T}$ such that $k\neq h$, there exists $j \in \{1,\dots,d-1\}$ such that $|k_j-h_j|\geq 1$. Hence,
\begin{equation*}
\begin{split}
    |\langle y_k-y_h, \nu^j\rangle|&=|\langle z_k-z_h,\nu^j\rangle+\langle y_k-z_k,\nu^j\rangle-\langle y_h-z_h,\nu^j\rangle|\\
    &\geq |\langle z_k-z_h,\nu^j\rangle|-|\langle y_k-z_k,\nu^j\rangle|-|\langle y_h-z_h,\nu^j\rangle|\\
    &\geq \ell_{T,\eta}-2\sqrt{d}\\
    &=T+\frac{\eta T}{2},
\end{split}
\end{equation*}
 from which it follows that
\begin{equation}
    \label{separation_small_cubes}
    \dist(Q_k,Q_h)\geq \frac{\eta T}{2}, \ \quad \text{for every}\  k \neq h.
\end{equation}
Arguing analogously, we infer that 
\begin{equation}\label{mindist}
\dist\Bigg(Q_k,\bigcup_{\substack{h\in I_{S,T}\\
    h\neq k}}Q_h\Bigg)\leq \frac{\eta T}{2}+4\sqrt{d}\leq\eta T
\end{equation}
for $T$ large enough.
By the definition of $I_{S,T}$, we also have
\begin{equation*}
    N_{S,T}=\Bigl(2\Big\lfloor\frac{(1-\eta)S-T-2\sqrt{d}}{2\ell_{T,\eta}}\Big\rfloor+1\Bigr)^{d-1}.
\end{equation*}
Consequently,
\begin{equation*}
\lim_{S\to+\infty}
\frac{N_{S,T}}{S^{d-1}}
=
\left(
\frac{1-\eta}{\ell_{T,\eta}}
\right)^{d-1}
\end{equation*}
and therefore, since $\ell_{T,\eta} \geq T$,
\begin{equation}
\label{asymptotic_covered_fraction}
\lim_{S\to+\infty}
\frac{N_{S,T}T^{d-1}}{S^{d-1}}
=
(1-\eta)^{d-1}
\left(
\frac{T}{\ell_{T,\eta}}
\right)^{d-1}
\leq1;
\end{equation}
i.e.
\begin{equation}
\label{stima_cardinalita}
    N_{S,T}\leq C\frac{S^{d-1}}{T^{d-1}}.
\end{equation}
We now define $u_S\in L^1(SQ_\nu;Y)$ by
\begin{equation*}
u_S(x)
:=
\begin{cases}
\bar u_T(x-y_k)
&\text{if }x\in Q_k\ 
\text{for}\ k\in I_{S,T},
\\[1mm]
u_{a,b,\nu}(x)
&\text{otherwise}.
\end{cases}
\end{equation*}
Since the cubes $Q_k$
 are pairwise disjoint, $u_S$ is well defined. Moreover, by \eqref{small_cubes_contained}, the function $u_S$ coincides with
$u_{a,b,\nu}$ on
\begin{equation*}
SQ_\nu\setminus(1-\eta)SQ_\nu.
\end{equation*}
Hence $u_S$ is admissible for $m(S,\eta)$, and thus
\begin{equation}
\label{mS_upper_bound_uS}
m(S,\eta)\leq F_1(u_S,SQ_\nu).
\end{equation}
We decompose $F_1(u_S,SQ_\nu)$ as 
\begin{equation*}
F_1(u_S,SQ_\nu)
=
F^{\rm cell}_{S,T}
+
F^{\rm rem}_{S,T},
\end{equation*}
where 
\begin{equation*}
    F^{\rm cell}_{S,T}=\sum_{k \in I_{S,T}}F_1(u_S,Q_k)
\end{equation*}
denotes the contribution of the cubes $Q_k$ and $F^{\rm rem}_{S,T}$ the remaining contribution.
By the periodicity of $\widehat K_1$ in its first
variable (see \eqref{periodicity_first_variable} for $\e=1$) and the fact that $y_k\in\mathbb Z^d$, for every
$k\in I_{S,T}$ we have
\begin{equation*}
F_1(u_S,Q_k)=
F_1(\bar u_T,TQ_\nu).
\end{equation*}
Therefore, by \eqref{energy_quasi_minimizer_small_cell}, we get 
\begin{equation}
\label{estimate_cell_part}
F^{\rm cell}_{S,T}
\leq
N_{S,T}
\left(
m(T,\eta)+\delta T^{d-1}
\right).
\end{equation}
We now estimate $F_{S,T}^{\rm rem}$ by distinguishing interactions of
length at most $\eta T/4$ from the remaining ones. Namely,
\begin{equation*}
F_{S,T}^{\rm rem}
=
F^{sr}_{S,T}
+
F^{lr}_{S,T},
\end{equation*}
where
\begin{equation*}
F^{sr}_{S,T}
:= F_1^{\frac{\eta T}{4}}(u_S,SQ_\nu)-\sum_{k \in I_{S,T}}F_1^{\frac{\eta T}{4}}(u_S,Q_k)
\end{equation*}
and
\begin{equation*}
\begin{split}
F^{lr}_{S,T}
:=& (F_1(u_S,SQ_\nu)-F_1^{\frac{\eta T}{4}}(u_S,SQ_\nu))-\sum_{k \in I_{S,T}}(F_1(u_S,Q_k)-F_1^{\frac{\eta T}{4}}(u_S,Q_k)).
\end{split}
\end{equation*}
We estimate $F^{sr}_{S,T}$.  Set
\begin{equation*}
U_{S,T}:=
\bigcup_{k\in I_{S,T}}Q_k
\end{equation*} 
 and let us first estimate the interactions between points in 
\begin{equation*}
    U_{S,T}^{\eta}:={\{x\in \rr^d: \dist(x,U_{S,T})\leq \eta T\}}.
\end{equation*}
Suppose that $x,x+z \in U_{S,T}^\eta$.
We recall that, by \eqref{separation_small_cubes}, if $|z|\leq \eta T/4$, the points $x$ and
$x+z$ cannot belong to two distinct cubes. Hence, the points $x$ and $x+z$ whose interactions are not taken into account in $F^{\rm cell}_{S,T}$ are such that both points lie outside
$U_{S,T}$ or one of them lies in $\bigcup_{k \in I_{S,T}}\left(Q_k\setminus(y_k+(1-\eta)TQ_\nu)\right)$. In the first case,   
\begin{equation*}
u_S(x)=u_{a,b,\nu}(x),
\qquad
u_S(x+z)=u_{a,b,\nu}(x+z).
\end{equation*}
If these two values are different, the segment
$[x,x+z]$ intersects $\Pi^\nu$.
By \eqref{mindist} and \eqref{stima_cardinalita}, the energetic contribution of these interactions is bounded by
\begin{equation}
\label{aux_estimate_short_rem_new}
    C\left(\frac{S}{T}\right)^{d-1}T^{d-2}T\eta\int_{|z|\leq \frac{\eta T}{4}}\rho(z)\, dz\leq C S^{d-1}\eta.
\end{equation}
Suppose now that $x\in Q_k\setminus (y_k+(1-\eta)TQ_\nu)$ and
$x+z\notin U_{S,T}$. Then
\begin{equation*}
u_S(x)=u_{a,b,\nu}(x-y_k),
\qquad
u_S(x+z)=u_{a,b,\nu}(x+z).
\end{equation*}
If these two values are different, then
\begin{equation*}
\langle x-y_k,\nu\rangle
\quad\text{and}\quad
\langle x+z,\nu\rangle
\end{equation*}
have opposite signs, from which we easily infer that
\begin{equation}\label{stimanormale}
|\langle x,\nu\rangle|\vee |\langle x-y_k,\nu\rangle|\leq|\langle y_k,\nu\rangle|+|\langle z,\nu\rangle|
\leq \sqrt d+|z|.
\end{equation}
Moreover, for $T$ sufficiently large, the segment $[x,x+z]$
cannot leave $Q_k$ through one of the two faces orthogonal to $\nu$.
Indeed, this would require
\begin{equation*}
|\langle x-y_k,\nu\rangle|
\geq \frac{T}{2}-|z|,
\end{equation*}
which is incompatible with (\ref{stimanormale}) for $T$ sufficiently large,
since $|z|\leq \eta T/4$. Therefore, the segment must intersect one of
the lateral faces of $Q_k$. Taking into account also (\ref{stimanormale}), we infer  that  there exists $\overline j\in\{1,\dots, d-1\}$ such that, setting
$$
D_k:=\left\{\xi\in \overline Q_k:\ \langle \xi-y_k,\nu^{\overline j}\rangle=\pm \frac T2,\ |\langle \xi,\nu\rangle|\leq \sqrt{d}+2|z|\right\},
$$
we have
$$
[x,x+z]\cap D_k\neq\emptyset.
$$
Hence,
$$
x\in\{x=x'+tz,\ x'\in D_k,\ |t|\leq 1\}
$$
whose measure is bounded by
\begin{equation*}
CT^{d-2}(\sqrt d+|z|)|z|.
\end{equation*}
By \eqref{stima_cardinalita}
and since $|z|\leq \eta T/4$, the energetic contribution of these interactions is bounded by
\begin{equation}\label{stimareminout}
\begin{split}
C\left(\frac{S}{T}\right)^{d-1}T^{d-2}
\int_{|z|\leq\frac{\eta T}{4}}
\rho(z)(\sqrt d+|z|)\,dz
 \leq
CS^{d-1}
\left(
\frac{1}{T}+\eta
\right).
\end{split}
\end{equation}

The remaining interactions in $F^{sr}_{S,T}$, which  have not been taken into account so far, are those between points $x$ and $x+z$ such that the segment $[x,x+z]$ intersects $\Pi^\nu$ at distance of order $S\eta$ from $\partial (SQ_\nu)$. Arguing as above the energetic contribution of these interactions is bounded by
\begin{equation}
\label{bound_energy_contribution_left}
    CS^{d-1}\eta\int_{|z|\leq \frac{\eta T}{4}}\rho(z)\, dz\leq CS^{d-1}\eta.
\end{equation}
Hence, combining \eqref{aux_estimate_short_rem_new} and \eqref{stimareminout}, \eqref{bound_energy_contribution_left}, we get
\begin{equation}
\label{estimate_short_remainder_new}
    F^{sr}_{S,T}\leq CS^{d-1}\eta.
\end{equation}

We now estimate $F^{lr}_{S,T}$.
For every $z\in\mathbb R^d$,
\begin{equation*}
\left|
\left\{
x\in(SQ_\nu)_z:
x\in U_{S,T}\text{ or }x+z\in U_{S,T}
\right\}
\right|
\leq2|U_{S,T}|
=
2N_{S,T}T^d.
\end{equation*}
Therefore, by \eqref{cv_crescita}, the energetic contribution of the
pairs of points such that one of them is in $U_{S,T}$ is bounded by
\begin{equation}
\label{long-interaction-inside-union}
CN_{S,T}T^d
\int_{|z|>\frac{\eta T}{4}}\frac{\rho(z)}{|z|}\,dz.
\end{equation}
If both points are outside $U_{S,T}$,  then
\begin{equation*}
u_S(x)=u_{a,b,\nu}(x),
\qquad
u_S(x+z)=u_{a,b,\nu}(x+z).
\end{equation*}For every $z\in \rr^d$,
\begin{equation}
\label{flat-jump-measure-estimate}
\left|
\left\{
x\in(SQ_\nu)_z:
u_{a,b,\nu}(x)
\neq
u_{a,b,\nu}(x+z)
\right\}
\right|
\leq
CS^{d-1}|z|.
\end{equation}
Consequently, combining \eqref{long-interaction-inside-union} and \eqref{flat-jump-measure-estimate}
\begin{equation*}
\begin{split}
F^{lr}_{S,T}
\leq{}
CN_{S,T}T^d
\int_{|z|>\frac{\eta T}{4}}\frac{\rho(z)}{|z|}\,dz
+
CS^{d-1}
\int_{|z|>\frac{\eta T}{4}}
\rho(z)\,dz .
\end{split}
\end{equation*}
Since
\begin{equation*}
\int_{|z|>\frac{\eta T}{4}}\frac{\rho(z)}{|z|}\,dz
\leq
\frac{4}{\eta T}
\int_{|z|>\frac{\eta T}{4}}\rho(z)\,dz
\end{equation*}
and
\begin{equation*}
N_{S,T}
\leq
C\left(\frac ST\right)^{d-1},
\end{equation*}
 we obtain 
\begin{equation}
\label{estimate-long-interactions}
F^{lr}_{S,T}
\leq
\frac{CS^{d-1}}{\eta}
\int_{|z|>\frac{\eta T}{4}}\rho(z)\,dz+CS^{d-1}\int_{|z|>\frac{\eta T}{4}}\rho(z)\, dz.
\end{equation}
Combining \eqref{mS_upper_bound_uS}, \eqref{estimate_cell_part},
\eqref{estimate_short_remainder_new}, and
\eqref{estimate-long-interactions}, we find
\begin{equation*}
\begin{split}
m(S,\eta)
&\leq
N_{S,T}
\left(
m(T,\eta)+\delta T^{d-1}
\right)+CS^{d-1}\eta
\\
&+
\frac{CS^{d-1}}{\eta}
\int_{|z|>\frac{\eta T}{4}}\rho(z)\,dz
+CS^{d-1}\int_{|z|>\frac{\eta T}{4}}\rho(z)\,dz.
\end{split}
\end{equation*}
Dividing by $S^{d-1}$, we obtain
\begin{equation*}
\begin{split}
\frac{1}{S^{d-1}}m(S,\eta)
\leq{}&\frac{N_{S,T}T^{d-1}}{S^{d-1}}
\left(\frac{1}{T^{d-1}}m(T,\eta)
+\delta\right)
+C\eta
\\
&+
\frac C\eta
\int_{|z|>\frac{\eta T}{4}}\rho(z)\,dz+C\int_{|z|>\frac{\eta T}{4}}\rho(z)\,dz.
\end{split}
\end{equation*}
Letting first $S$ and then $T$ tend to $+\infty $, by \eqref{condizione_rho_hom} and
\eqref{asymptotic_covered_fraction} we get
\begin{equation*}
\limsup_{S\to +\infty}\frac{1}{S^{d-1}}m(S,\eta)
\leq{}\liminf_{T\to +\infty}
\frac{1}{T^{d-1}}m(T,\eta)
+\delta
+C\eta.
\end{equation*}
By the arbitrariness of $\delta$ we get \eqref{liminf_controlla_limsup}.
\end{proof}
\end{theorem}

\section{Approximation of nonlocal functionals}
\label{Approximation of non local functional}
In this section we show that if the decay condition  \eqref{eq:nuclei3} is not satisfied, then the $\Gamma$-limits of energies as in \eqref{rescale_energy} may be nonlocal. We provide two examples.\\

\begin{example}
    Fix $r\in(0,1)$ and assume $\Omega \subset \mathbb{R}^d$ such that $\diam(\Omega)>1-r$. Consider the family of functionals 
    \begin{equation*}
        \begin{split}
 G_\e(u)=G^{sr}_\varepsilon(u)+ G^{nl}(u):&= \frac{1}{\e^{d+1}}\int_{\{(x,y)\in \Omega \times \Omega:\, |y-x|\leq \e\}}|u(y)-u(x)|\, dx\, dy\\
&+\int_{\{(x,y)\in \Omega \times \Omega:\, 1-r\leq|y-x|\leq1+r\}}\left|u(y)-u(x)\right|\, dx\, dy.
        \end{split}
    \end{equation*}
 Note that, for $0<\e<1-r$, $G_\e$ is of the form \eqref{definizione di G} with $A=\Omega$, $\rho_\e(z)=\tilde\rho_\e(|z|)$ and $\tilde\rho_\e:[0,+\infty)\to[0,+\infty)$ defined by
    \begin{equation*}
    \tilde \rho_\e(t)=
        \begin{cases}
        \displaystyle
            \frac{t}{\e^{d+1}} &\text{if} \ 0\leq t\leq \e\\
            t&\text{if}\ 1-r\leq t\leq 1+r\\
            0 & \text{otherwise}.
        \end{cases}
    \end{equation*}
We find that $\rho_\e$ is uniformly bounded in $L^1(\rr^d)$, whereas \eqref{eq:nuclei3} fails. Indeed, by passing to polar coordinates,
    \begin{equation*}
\int_{\rr^d}\rho_\e(z) dz=\frac{\sigma_{d-1}}{d+1}\left[1+(1+r)^{d+1}-(1-r)^{d+1}\right],
    \end{equation*} 
while, given $\e<\eta\leq 1-r$, 
    \begin{equation*}
        \int_{|z|>\eta} \rho_\e(z)\, dz=\frac{\sigma_{d-1}}{d+1}\left[(1+r)^{d+1}-(1-r)^{d+1}\right]>0.
    \end{equation*}
    As shown in Example \ref{volume}, $ G^{sr}_\e$ $\Gamma$-converges with respect to the strong $L^1(\Omega;\rr^m)$ topology to the local functional 
\begin{equation}
\label{convergence_to_local_functional}
 G^{sr}(u):= \left\{
\begin{array}{ll}
\displaystyle \frac{2\omega_{d-1}}{d+1}\int_{S_u}|u^+-u^-|\,d\mathcal H^{d-1} &u\in BV(\Omega;Y)\\
+\infty &\hbox{otherwise.}
\end{array}
\right.
\end{equation}
We note that $G^{nl}$ is continuous with respect to the strong convergence in $L^1(\Omega;\rr^m)$. Indeed, through the change of variable $z=y-x$,  $G^{nl}(u)$ can be written as
    \begin{equation*}
G^{nl}(u)=\int_{B_{1+r}\setminus B_{1-r}}\int_{\{x \in \Omega\ :\ x+z\in \Omega\}}|u(x+z)-u(x)|\, dx\, dz.
    \end{equation*}
     Hence, given $u,v\in L^1(\Omega;\rr^m)$, we have
    \begin{equation*}
    \begin{split}
        |G^{nl}(v)-G^{nl}(u)|&\leq \int_{B_{1+r}\setminus B_{1-r}}\int_{\{x \in \Omega \ : \ x+z \in \Omega\}}|v(x+z)-u(x+z)|\, dx\, dz\\
        & + \int_{B_{1+r}\setminus B_{1-r}}\int_{\{x \in \Omega \ : \ x+z \in \Omega\}}|v(x)-u(x)|\, dx\, dz\\
        &\leq 2\, |B_{1+r}\setminus B_{1-r}|\|v-u\|_{L^1(\Omega;\rr^m)}.
    \end{split}
    \end{equation*}
     Then, the stability of $\Gamma$- convergence under continuous perturbations yields
    $$
    \Gamma(L^1)\text{-}\lim_{\e\to 0}G_\e(u)=G^{sr}(u)+G^{nl}(u).
    $$
   Hence, the first term is a local, perimeter-type energy, whereas the second is genuinely nonlocal.
\end{example}
    
\begin{example} Let $0<s<1$ and consider the family of functionals
\begin{equation*}
        \begin{split}
 \hat G_\e(u)=G^{sr}_\varepsilon(u)+ \hat G^{nl}(u):&=\frac{1}{\e^{d+1}}\int_{\{(x,y)\in \Omega \times \Omega:\, |y-x|\leq \e\}}|u(y)-u(x)|\, dx\, dy\\
&+\int_{\Omega}\int_{\Omega}\frac{|u(y)-u(x)|}{|y-x|^{d+s}}\, dx\, dy.
        \end{split}
\end{equation*}
    As in the previous example, $\hat G_\e$ is of the form \eqref{definizione di G} with $A=\Omega$,  $\rho_\e(z)=\hat\rho_\e(|z|)$ and $\hat\rho_\e:[0,+\infty)\to[0,+\infty)$ defined by
    \begin{equation*}
    \hat \rho_\e(t):=\chi_{[0,D]}(t)\frac{1}{t^{d+s-1}}+
        \begin{cases}
        \displaystyle
            \frac{t}{\e^{d+1}} &\text{if} \ 0\leq t\leq \e\\
            0&\text{if}\  t> \e,
        \end{cases}
    \end{equation*}
     where $D:=\diam(\Omega)$. Note that, if $Y=\{0,1\}$, $\hat G^{nl}(u)$ is twice the $s$-fractional perimeter of $\{u=1\}$ inside $\Omega$.
     Also in this case  $\rho_\e$ is uniformly bounded in $L^1(\rr^d)$, whereas \eqref{eq:nuclei3} fails. Indeed,
\begin{equation*}
    \int_{\rr^d}\hat\rho_\e(z)\, dz=\sigma_{d-1}\left[\frac{1}{d+1}+\frac{D^{1-s}}{1-s}\right],
\end{equation*}
while,
if $\e<\eta<D$, we have
\begin{equation*}
    \int_{|z|>\eta}\hat\rho_\e(z)\, dz=\frac{\sigma_{d-1}}{1-s}\left(D^{1-s}-\eta^{1-s}\right) >0.
\end{equation*}

We now prove  that $\hat G_\e\xrightarrow{\Gamma} G^{sr}+\hat G^{nl}$ in the strong $L^1(\Omega;\rr^m)$ topology, where $G^{sr}$ is defined in \eqref{convergence_to_local_functional}, but we cannot argue as in the previous example. Indeed, by Fatou's Lemma, we can easily infer  that $\hat G^{nl}(u)$ is lower semicontinuous with respect to strong  $L^1(\Omega;\rr^m)$ convergence, but it is not continuous. 

From the lower semicontinuity of $\hat G^{nl}$ we immediately deduce that the $\Gamma$-liminf inequality holds true. Indeed, for every $u_\e\to u$ in $L^1(\Omega;\rr^m)$, we have
\begin{equation*}
    \liminf_{\e\rightarrow 0} \hat G_\e(u_\e)\geq \liminf_{\e \rightarrow 0} G_\e^{sr}(u_\e)+\liminf_{\e \rightarrow 0}\hat G^{nl}(u_\e)\geq  G^{sr}(u)+\hat G^{nl}(u).
\end{equation*}
We now prove the $\Gamma$-limsup inequality. If $u\notin BV(\Omega;Y)$ there is nothing to prove,
since $G^{sr}(u)=+\infty$. If $u\in BV(\Omega;Y)$, we take $u_\varepsilon\equiv u$. By Theorem \ref{Thm_gamma_limite_esplicito},
applied with $A=\Omega$ and the kernel of Example \ref{volume}, we have
$\lim_{\varepsilon\to0}G^{sr}_\varepsilon(u)=G^{sr}(u)$. Hence, since $\hat G^{nl}$ does not depend on $\varepsilon$,
$$
\limsup_{\varepsilon\to0}\hat G_\varepsilon(u)=\lim_{\varepsilon\to0}G^{sr}_\varepsilon(u)+\hat G^{nl}(u)=G^{sr}(u)+\hat G^{nl}(u).
$$
\end{example}

\paragraph{\bf{Statement of AI use.}} The authors used AI tools for minor copyediting assistance during the final revision of this paper.
The manuscript and all mathematical content were conceived and written entirely by the authors.

\paragraph{\bf{Acknowledgements.}} The authors are members of Gruppo Nazionale per l'Analisi Ma\-te\-ma\-ti\-ca, la Probabilit\`a e le loro Applicazioni (GNAMPA) of INdAM. They all acknowledge support from the INdAM-GNAMPA Project ``Analisi e Gamma-convergenza per alcuni funzionali non locali" CUP E53C25002010001. The research of C.L. was also funded by the University of Naples Federico II through FRA Project ``ReSinApas". 
\\

\end{document}